\newtheorem{thm}{Theorem}[section]
\newtheorem{cor}[thm]{Corollary}
\newtheorem{lem}[thm]{Lemma}
\newtheorem{prop}[thm]{Proposition}
\theoremstyle{remark}
\newtheorem{rem}{Remark}[section]
 \def\a{{\alpha}} 
 \def\b{{\beta}}
 \def\g{{\gamma}}
 \def\t{{\theta}}
 \def\d{{\delta}}
 \def\la{{\langle}}
 \def\ra{{\rangle}}
 \def\CE{{\mathcal E}}
 \def\CK{{\mathcal K}}
 \def\CO{{\mathcal O}}
 \def\CU{{\mathcal U}}     
 \def\CS{{\mathcal S}}     
 \def\CV{{\mathcal V}}
 \def\CW{{\mathcal W}}
 \def\RR{{\mathbb R}}
\newcommand{\wh}{\widehat}
\newif\ifpdf
\begin{document}
 
\title[orthogonal polynomials of two variables]
{Orthogonal polynomials and expansions  for a family of
weight functions in two variables}
\author{Yuan Xu}
\address{Department of Mathematics\\ University of Oregon\\
    Eugene, Oregon 97403-1222.}\email{yuan@math.uoregon.edu}

\date{\today}
\keywords{Orthogonal polynomials, orthogonal expansions, Jacobi polynomials, two variables,
Lebesgue constants.}
\subjclass[2000]{33C50, 42C10}

\begin{abstract}
Orthogonal polynomials for a family of weight functions on $[-1,1]^2$, 
$$
\CW_{\a,\b,\g}(x,y) = |x+y|^{2\a+1} |x-y|^{2\b+1} (1-x^2)^\g(1-y^2)^{\g},
$$
are studied and shown to be related to the Koornwinder polynomials defined on
the region bounded by two lines and a parabola. In the case of $\g = \pm 1/2$, 
an explicit basis of orthogonal polynomials is given in terms of Jacobi polynomials 
and a closed formula for the reproducing kernel is obtained. The latter is used to 
study the convergence of orthogonal expansions for these weight functions. 
\end{abstract}

\maketitle

\section{Introduction}
\setcounter{equation}{0}
 
Orthogonal polynomials of two variables with respect to a nonnegative weight function
that has all moments finite are known to exist (\cite{DX}).  A basis of orthogonal polynomials   
can be written down, say, in terms of moments, but such a basis is often hard to work with. 
For studying orthogonal polynomials and orthogonal expansions, additional structures are
often called for. In the case of classical weight functions in two variables, for example, 
an orthogonal basis can be expressed in terms of classical orthogonal polynomials of one 
variable. There are, however, not many such examples; each additional one is valuable 
in its own right.

The purpose of the present paper is to study orthogonal polynomials and orthogonal expansions 
with respect to a family of weight functions defined on $[-1,1]^2$, which includes as a 
special case 
\begin{equation} \label{CWabg}
           \CW_{\a,\b,\g}(x,y) := |x-y|^{2\a+1} |x+y|^{2\b+1} (1-x^2)^\g (1-y^2)^{\g}, 
\end{equation}
where $\a,\b,\g > -1$ and $\a + \g +3/2 >0$ and $\b + \g + 3/2 > 0$. In the case 
$\a = \b = -1/2$, $\CW_{\a,\b, \g}$ is the product Gegenbauer weight functions, for which 
an orthogonal basis is given by product Gagenbauer polynomials.  We shall show that 
orthogonal polynomials for this family of weight functions can be expressed in terms of 
orthogonal polynomials in one variable when $\g = \pm \frac12$. Our study starts from a 
realization that it is possible to express the orthogonal polynomials with respect to 
$\CW_{\a,\b,\g}$ in terms of the Koornwinder polynomials that are orthogonal
with respect to the weight function 
\begin{equation} \label{Wabg}
    W_{\a,\b,\g}(u,v) := (1-u+v)^\a (1+u+v)^\b (u^2 - 4 v)^{\g}
\end{equation}
defined on the domain $\Omega$ bounded by two lines and a parabola,
\begin{equation} \label{Omega}
       \Omega: = \{(u,v): 1+u+v > 0, 1-u+v > 0, u^2 > 4v\}. 
\end{equation} 
Orthogonal polynomials with respect to $W_{\a,\b,\g}$ were first studied by 
Koornwinder in \cite{K74}, where an orthogonal basis is uniquely defined and  
shown to consist of eigenfunctions of two differential operators of order 2 and order 4,
respectively. In the case of $\g = \pm \frac12$, the orthogonal polynomials can 
be given in terms of the Jacobi polynomials of one variable. Further studies were 
carried out in \cite{KS, S}; in particular, explicit formula for the orthogonal polynomials 
were derived and various recursive relations were established. The connection to 
orthogonal polynomials with respect to $\CW_{\a,\b,\g}$ is somewhat surprising, 
but simple in retrospect as can be seen by the relation
$$
   \CW_{\a,\b,\g}(x,y) = 4^{-\g} W_{\a,\b,\g}(2xy, x^2+y^2 -1) |x^2 - y^2|. 
$$
As a result of the connection, an orthogonal basis for $\CW_{\a,\b, \pm \frac12}$
can be given explicitly in terms of the Jacobi polynomials. 
 
An explicit orthogonal basis makes it possible to study orthogonal expansions,
for which however it is essential to have access to the reproducing kernel of
the space of polynomials of degree at most $n$ in $L^2(W)$. It turns out that 
closed forms of the reproducing kernels for $W_{\a,\b,\pm \frac12}$ and for 
$\CW_{\a,\b,\pm \frac12}$, respectively, can be given in terms of the reproducing 
kernels of the Jacobi polynomials. This allows us to prove several results on the 
convergence of the orthogonal expansions for these weight functions. The results 
include $L^p$ convergence of the partial sum operators and sharp estimate of 
the Lebesgue constants. It is interesting to note that analogous result on the $L^p$ 
convergence has not been proven for the product Jacobi series on the square (the 
partial sum is defined in terms of polynomial subspace of total order, see Remark 2.1). 
In fact, as far as we know, $W_{\a,\b, -\frac12}$ appears to be the first family of weight functions 
on the unit square for which a comprehensive study of orthogonal expansions is possible. 

The Koornwinder polynomials are derived from the symmetric orthogonal polynomials
with respect to the weight function 
$$
   W_{\a,\b,\g}(x+y,x y)|x-y| = (1-x)^\a (1+x)^\b (1-y)^\a (1+y)^\b | x - y |^{2 \gamma+1},    
$$
which are the generalized Jacboi polynomials of $BC_2$ type. The latter are the 
first case of the generalized Jacobi polynomials of $BC_n$ type studied by several 
authors (see, e.g. \cite{BO, V}) and they motivated the Jacobi polynomials of 
Heckman and Opdam \cite{HO} associated with root systems.  The connection between
these $BC_2$ polynomials and orthogonal polynomials for $\CW_{\a,\b,\g}$ appears to 
be new. 

The paper is organized as follows. The following section is a preliminary, where the basic
results on orthogonal polynomials are introduced. In Section 3 we recollect properties of 
orthogonal polynomials for $W_{\a,\b,\g}$ and establish a closed form formula for the
reproducing kernel. The orthogonal polynomials for $W_{\a,\b,\g}$ are studied in Section 4. 
The orthogonal expansions are investigated in Section 5. 

\section{Preliminary on orthogonal polynomials}
\setcounter{equation}{0}

In this short section we recall basics on orthogonal polynomials of one variable and two 
variables, respectively, in two separate subsections. 

\subsection{Orthogonal polynomials of one variable}
Let $w$ be a nonnegative weight function on $[-1,1]$ 
that has finite moment of all orders. Throughout this paper we denote by $p_n$ the 
orthonormal polynomials of degree $n$ with respect to the weight function $w$, which 
are uniquely determined by 
$$
    \int_{-1}^1 p_n(x) p_m(x)w(x) dx = \delta_{n,m}, \quad n, m \ge 0
$$
and $\g_n >0$,  where $\g_n$ denotes the leading coefficient of the orthogonal polynomial 
$p_n$, that is, $p_n(x) = \g_n x^n + \ldots$. Let $\Pi_n$ denote the space of polynomials of 
degree at most $n$ in one variable. The reproducing kernel of $k_n(w; \cdot,\cdot)$ of $\Pi_n$ 
is defined by the relation
$$
    \int_{-1}^1 p(y) k_n(w;x,y) w(y) dy = p (x), \quad \forall p \in \Pi_n. 
$$
The well known Christoffel-Darboux formula shows that 
\begin{equation} \label{CDformula}
    k_n(w; x,y) = \sum_{k=0}^n  p_k(x) p_k(y) = 
      \frac{\g_{n}}{\g_{n+1}} \frac{p_{n+1}(x) p_n(y)- p_{n+1}(y) p_n(x)}{x-y}. 
\end{equation}
The Fourier orthogonal expansion of $f \in L^2(w)$ is defined by
$$
  f  = \sum_{n=0}^\infty \wh f_n  p_n \quad \hbox{and} \quad 
      \wh f_n : = \int_{-1}^1 f(y) p_n (y) w (y)dy, 
$$
where the equality holds in the $L^2(w)$ sense by the standard Hilbert space theory and
the fact that polynomials are dense in $L^2(w)$. 
The partial sum operator $s_n f$ of this expansion is given by 
\begin{equation}\label{eq:partial}
    s_n (w; f, x) := \sum_{k=0}^n \wh f_k p_k  
       = \int_{-1}^1 f(y) k_n (w; x,y)  w(y)dy,
\end{equation}
where the second equal sign follows from the definition of $k_n(w;\cdot,\cdot)$.

The Jacobi weight function $w = w_{\a,\b}$ ($w \in J$) is defined by 
$$
  w_{\a,\b}(x): = (1-x)^\a (1+x)^\b,  \qquad \a,\b > -1.  
$$
The Jacobi polynomials are orthogonal with respect to $w_{\a,\b}$ and they 
are given explicitly as an hypergeometric function
$$
 P_n^{(\a,\b)}(x,y) = \frac{(\a+1)_n}{n!} {}_2F_1 \left(\begin{matrix} -n, n+\a+\b+1 \\ \a+1 \end{matrix};
        \frac{1-x}{2} \right). 
$$
These polynomials satisfy the orthogonal conditions
\begin{align*}
  c_{\a,\b}  \int_{-1}^1 P_n^{(\a,\b)} (x) P_m^{(\a,\b)} (x) w_{\a,\b}(x) dx 
      =  h_n^{(\a,\b)} \delta_{n,m}, 
\end{align*}
where 
\begin{align*}
       c_{\a,\b}: = \frac{\Gamma(\a+\b+2)}{2^{\a+\b+1} \Gamma(\a+1)\Gamma(\b+1)}, \quad
       h_n^{(\a,\b)}:= \frac{(\a+1)_n (\b+a)_n (\a+\b+n+1)}{n!(\a+ \b+2)_n (\a+\b+ 2n+1)}.
\end{align*}
We denote the orthonormal Jacobi polynomials by $p_n^{(\a,\b)}$. It follows readily that 
$p_n^{(\a,\b)}(x) = (h_n^{(\a,\b)} )^{-\frac12} P_n^{(\a,\b)}(x)$. Furthermore, for $w = w_{\a,\b}$, 
we also write the reproducing kernel as $k_n^{(\a,\b)}(\cdot,\cdot)$ and the partial sum 
operator as $s_n^{(\a,\b)} (f)$. 

More generally, a function $w$ is called a generalized Jacobi weight function ($w \in GJ$) if
it is of the form
\begin{equation} \label{GJ}
   w(x) = \psi(x) (1-x)^{\g_0} (1+x)^{\g_{r+1}} \prod_{i=1}^r |x - x_i|^{\gamma_i},  \quad  \g_i > -1,  
\end{equation}
if $|x| \le 1$ and $w(x) = 0$ if $|x| >1$, where $-1 < x_1 < \ldots < x_r <1$ and $\psi$ is a 
positive continuous function in $[-1,1]$ and the modulus of continuity $\omega$ of $\psi$ satisfies
$$
    \int_0^1 \frac{\omega(t)}{t} dt < \infty,
$$
which holds, in particular, if $\psi$ is continuously differentiable. For a class $GJ$, the points 
$x_1,\ldots, x_r$ are fixed whereas $\g_1,\ldots, \g_r$ are parameters. In the case of 
$\g_i = 0$, $1 \le i \le r$ and $\psi(x) = 1$, $w$ is an ordinary Jacobi weight function. Orthogonal polynomials
with respect to $w \in GJ$ are called generalized Jacobi polynomials. They share many 
properties of Jacobi polynomials (see, e.x., \cite{Ba, N}), even though 
they do not have explicit formulas in terms of hypergeometric functions.  

\subsection{Orthogonal polynomials of two variables} Let $W$ be a nonnegative weight 
function defined on a bounded domain $\Omega \subset \RR^2$. We define an inner product
\begin{equation} \label{eq:inner}
    \la f, g \ra_W: = \int_\Omega f(x_1,x_2) g(x_1,x_2) W(x_1,x_2) dx_1dx_2 
\end{equation}
on the space of polynomials. Let $\Pi_n^2$ denote the space of polynomials of (total) degree 
at most $n$ in two variables. A polynomial $P \in \Pi_n^2$ is called orthogonal if $\la P, Q\ra = 0$ for 
all $Q \in \Pi_{n-1}^2$. Let $\CV_n(W)$ denote the space of such orthogonal polynomials 
of degree $n$. Then 
$$
       \dim \CV_n(W) = n+1, \qquad \dim \Pi_n^2 = \binom{n+2}{2}. 
$$

The space $\CV_n(W)$ can have many different bases. We usually index the elements 
of a basis by $\{P_{k,n}: 0 \le k \le n\}$. A basis of 
$\CV_n(W)$ is called mutually orthogonal if 
$$
    \la P_{k,n}, P_{j,n} \ra_W = h_k \delta_{k,j},  \qquad 0 \le k,j \le n,
$$
and it is called orthonormal if $h_k =1$, $0 \le k \le n$.
The reproducing kernel $K_n(W; \cdot,\cdot)$ of $\Pi_n^2$ in $L^2(W)$ is defined uniquely by 
$$
    \int_\Omega K_n(W; x,y) f (y) W(y) dy = f(x),     \quad \forall f \in \Pi_n^2, 
$$
where $x = (x_1,x_2)$ and $y=(y_1,y_2)$. Let $\{P_{k,m}: 0 \le k \le m\}$ be a sequence 
of orthonormal polynomials with respect to $W$. Then the kernel 
$K_n(W;\cdot,\cdot)$ satisfies 
\begin{equation} \label{reprodK}
   K_n(W; x,y) = \sum_{m=0}^n \sum_{k=0}^m P_{k,m}(x) P_{k,m}(y). 
\end{equation}
Since $W$ is bounded, polynomials are dense in $L^2(W)$. For $f \in L^2(W)$, the orthogonal 
expansion of $f$ is defined by  
$$
  f = \sum_{n=0}^\infty \sum_{k=0}^n \wh f_{k,n} P_{k,n}, \quad\hbox{where} \quad
   \wh f_{k,n} = \int_\Omega f(y) P_{k,n}(y) W(y) dy. 
$$
The $n$-th partial sum operator of the above expansion is give by
\begin{equation} \label{partialS}
 S_n(W; f) :=  \sum_{k=0}^n \sum_{j=0}^k \wh f_{j,k} P_{j,k} = \int_{\Omega} f(y) K_n(W; \cdot, y)W(y) dy, 
\end{equation}
where the second equal sign follows from \eqref{reprodK}. 
There is an analogue of the Christoffel-Darboux formula for this kernel (\cite[p. 109]{DX})
but it still involves a summation and is not as useful. For studying convergence of the 
orthogonal expansions beyond $L^2$, it is often necessary to have a compact formula for
the kernel. 

\begin{rem}
The partial sum $S_n(W;f)$ in \eqref{partialS} is defined in terms of the polynomial space $\Pi_n^2$ in total
degree. Since $p_n(x) p_m(x)$ has degree $n+m$, the partial sum for the product weight function, say  
$W_{\a,\b} (x,y) = w_{\a,\b}(x)w_{\a,\b}(y)$, does not have a product structure. In fact, there is no compact
formula for the kernel $K_n(x,y)$ for $W_{\a,\b}$ if $(\a,\b) \ne (-\frac12, - \frac12)$. As a consequence, 
there is little progress on the study of orthogonal expansions on the square.
\end{rem}

\section{Koornwinder orthogonal polynomials}
\setcounter{equation}{0}

The definition and the properties of the Koornwinder orthogonal polynomials are discussed in
the first subsection. A new compact formula for the reproducing kernels is given in the second 
subsection.

\subsection{Orthogonal polynomials}
Let $w$ be a nonnegative weight function defined on $[-1,1]$.  For $\g > -1$ define
\begin{equation}\label{BCweight}
          B_\gamma (x,y) := a_w^\g w(x) w(y) |x-y|^{2\g +1}, \qquad (x,y) \in [-1,1]^2, 
\end{equation}
where $a_w^\g$ is a normalization constant such that $\int_{[-1,1]^2} B_\g(x,y)dxdy  =1$. 

Since $B_\g$ is evidently symmetric in $x,y$, we only need to consider its restriction on 
the triangular domain $\Delta$ defined by 
$
      \triangle : = \{(x,y): -1< x< y<1\}. 
$
Let $\Omega$ be the image of $\Delta$ under the mapping $(x,y) \mapsto (u,v)$ defined by 
\begin{equation}   \label{u-v} 
        u = x+y, \quad v= xy.
\end{equation}
It is easy to see that this mapping is a bijection between $\triangle$ and $\Omega$. 
The domain $\Omega$ is given by 
$$
       \Omega: = \{(u,v): 1+u+v > 0, 1-u+v > 0, u^2 > 4v\}
$$
and it is depicted in Figure 1. 
\begin{figure}[h]
\includegraphics[scale=0.48]{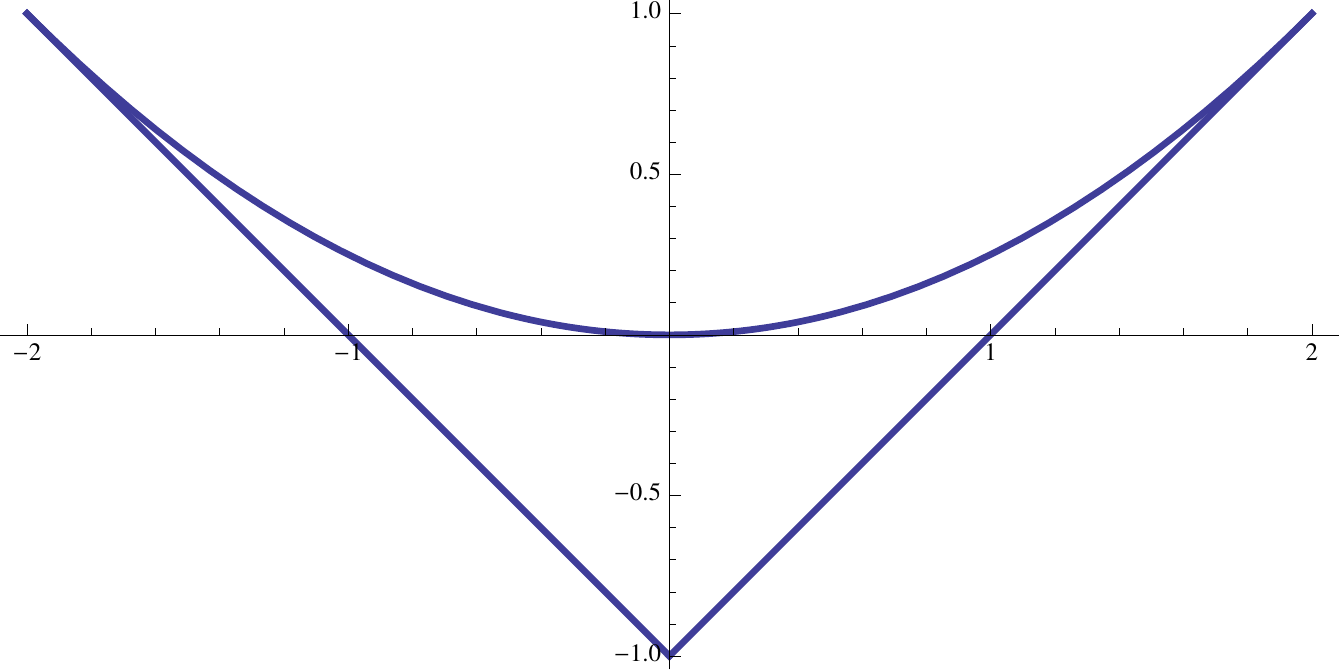}
\caption[Region of Koornwinder' orthogonal polynomials]{Domain $\Omega$}
\label{figure:region} 
\end{figure} 
This is exactly the domain defined in \eqref{Omega}. 

We consider a family of weight functions defined on the domain $\Omega$ by 
\begin{equation} \label{Wgamma}
      W_\g(u,v)  = 2 a_w^\g w(x) w(y) (u^2 - 4 v)^\g, \qquad (u,v) \in \Omega, 
\end{equation}
where the variables $(x,y)$ and $(u,v)$ are related by \eqref{u-v}. The Jacobian of the 
change of variables \eqref{u-v} is given by $du dv = |x-y|dxdy$. Moreover, $u^2 - 4 v = 
(x-y)^2$.  It follows that 
\begin{align} \label{Para-Square}
  \int_{\Omega} f(u,v) W_\g(u,v) dudv & = 
       2 \int_{\triangle}  f(x+y, x y) B_\g(x,y) dxdy \\
       & =  \int_{[-1,1]^2}  f(x+y, x y) B_\g(x,y) dxdy, \notag
\end{align}
where the second equal sign follows since the integrant is a symmetric function of 
$x$ and $y$, and $[-1,1]^2$ is the union of $\triangle$ and its image under 
$(x,y) \mapsto (y,x)$.  In particular, setting $f(x,y) =1$ shows that $W_\g$ is a 
normalized weight function.  

In the case of $w(x) = w_{\a,\b}(x)$, the weight function $W_\gamma$ becomes
$W_{\a,\b,\g}$ in \eqref{Wabg}, which we restate below,
\begin{equation} \label{Wabc}
    W_{\a,\b,\g}(u,v) : = 2 a_{\a,\b,\g}(1-u+v)^\a (1+u+v)^\b (u^2- 4 v)^\g, \quad (u,v) \in \Omega,
\end{equation}
where the constant $a_{\a,\b,\g}$ is given by \cite[Lemma 6.1]{S}, 
\begin{align}
 a_{\a,\b,\g}: = \frac{\sqrt{\pi} }{2^{2\a+2\b+4\g+4}} \frac{\Gamma(\a+\b+\g+\frac52)\Gamma(\a+\b+2 \g+3)}
    {\Gamma(\a+1) \Gamma(\b+1)\Gamma(\g+1) \Gamma(\a+\g + \frac32)
      \Gamma(\b+\g+\frac32)},
\end{align}
which is the two-variable case of the Selberg integral (e.x., (1.1) of \cite{FW}). 
This weight function is integrable on $\Omega$ if $\a,\b,\g > -1$ and $\a + \g +3/2 >0$
and $\b + \g + 3/2 > 0$, and we assume that $\a,\b,\g$ satisfy these inequalities from 
now on. Other examples of $W_\g$ include 
$$
    e^{- a u}  (u^2 - 4 v)^\g \quad \hbox{and}\quad e^{- a  (u^2 - 2 v)}  (u^2 - 4 v)^\g,  \quad a > 0.
$$
which correspond to the choices of $w(x)  = e^{- a x}$ and $w(x) = e^{- a x^2}$. 

Let ${\mathcal N} = \{(k,n): 0 \le k \le n\}$. In ${ \mathcal N}$ define $(j,m) \prec (k,n)$ 
if $m < n$ or $j \le k$ when $m = n$. Then the orthogonal polynomials $P_{k,n}^{(\g)}$ 
that satisfy 
\begin{equation}\label{eq:jacobi-biangle}
   P_{k,n}^{(\g)}(u,v) = u^{n-k} v^k + \sum_{(j,m) \prec (k,n)} a_{j,m} u^{m-j} v^j 
\end{equation}
and the orthogonality condition 
\begin{equation}\label{eq:biangle-ortho}
     \int_\Omega P_{k,n}^{(\g)}(u,v) u^{m-j} v^j W_{\g}(u,v) dudv =0, 
           \quad \forall (j,m) \prec (k,n), 
\end{equation}
are uniquely determined, as can be seen by the Gram-Schmidt process. The polynomials 
$P_{k,n}^{(\g)}$ are mutually orthogonal. 

When $W_\g = W_{\a,\b,\g}$, we denote these 
orthogonal polynomials by $P_{k,n}^{\a,\b,\g}$. 

In the case of $\g = \pm  \frac12$, these orthogonal polynomials can be given explicitly,
as can be easily verified upon using \eqref{Para-Square}. 
Let $p_n$ denote the orthogonal polynomial of degree $n$ with respect to $w$.  
Then an orthonormal basis with respect to $W_{-\frac12}$ is given by 
\begin{equation} \label{OP-1/2}
    P_{k,n}^{(-\frac12)} (u,v) =  \frac{1}{\sqrt{2}}\begin{cases} 
               p_n(x) p_k(y) + p_n(y) p_k(x), & 0 \le k < n, \\ 
               \sqrt{2} p_n(x) p_n(y), & k =n,\end{cases} 
\end{equation}
and an orthonormal basis with respect to $W_{\frac12}$ is given by 
\begin{equation} \label{OP+1/2}
    P_{k,n}^{(\frac12)} (u,v) = \sqrt{\frac{a_w^{- 1/2}}{2 a_w^{1/2}}}\,  \frac{p_{n+1}(x) p_k(y) - p_{n+1} (y) p_k(x)}{x-y}, \quad 0 \le k \le n, 
\end{equation}
both families are defined under the mapping \eqref{u-v}. It should be noted that the polynomials 
in \eqref{OP-1/2} and \eqref{OP+1/2} are normalized by their orthonormality, instead of 
by the leading coefficient as in \eqref{eq:biangle-ortho}, but they have the same structure
as those in \eqref{eq:biangle-ortho}, so that the difference is just a constant multiple.  
In the case of $w=w_{\a,\b}$,  the orthogonal polynomials in \eqref{OP-1/2} and \eqref{OP+1/2} 
are denoted by $P_{k,n}^{\a,\b,-\frac12}$ and $P_{k,n}^{\a,\b,\frac12}$, respectively, and
they are expressed in terms of Jacobi polynomials $p_n^{(\a,\b)}$. 

For $W_{\a,\b,\g}$ these orthogonal polynomials were first studied by Koornwinder in 
\cite{K74}, see also \cite{K75}. The above statements for more general weight function $W_\g$ 
are straightforward extensions and used in \cite{SX} for studying Gaussian cubature rules. Much 
more can be said about the orthogonal polynomials $P_{k,n}^{\a,\b,\g}$. They are, for example, 
eigenfunctions of two differential operators of order 2 and order 4, respectively \cite{K74}. Another 
pair of differential operators were constructed in \cite{S},  
\begin{align} \label{Eab}
\begin{split}
& E_-^{\a,\b}: = u \frac{\partial^2}{\partial u^2} + 2(v + 1) \frac{\partial^2}{\partial u \partial v} +
    u \frac{\partial^2}{\partial v^2} + (\b - \a)\frac{\partial}{\partial v} + (\a + \b +2) \frac{\partial}{\partial u}\\
& E_+^{\a,\b,\g}: = \left[W_{\a,\b,\g}(u,v)\right]^{-1} E_-^{-\a, -\b} W_{\a,\b,\g+1}(u,v),
\end{split}
\end{align}
and they act as raising and lowering operators on the orthogonal polynomials,
\begin{align*}
   E_-^{\a,\b}P_{k,n}^{\a,\b,\g} (u,v) & =  (n-k)(n+k+\a+\b+1)P_{k,n-1}^{\a,\b,\g+1}(u,v), \\
   E_+^{\a,\b,\g}P_{k,n-1}^{\a,\b,\g+1} (u,v) & =  (n-k+2 \g+1)(n+k+\a+\b+2 \g+2)P_{k,n}^{\a,\b,\g}(u,v)
\end{align*}
for $0 \le k \le n-1$ and $E_-^{\a,\b}P_{n,n}^{\a,\b,\g} (u,v) =0$. Together these two operators
can be used to give a Rodrigues type formula for $P_{k,n}^{\a,\b,\g}$ (\cite[(5.1)]{S}) and they
can also be used to calculate the $L^2$-norms of $P_{k,n}^{\a,\b,\g}$ and the coefficients in the
recurrence relation.   

The polynomials $P_{k,n}^{\a,\b,\g}(u,v)$ also satisfy a quadratic transformation formula 
\cite[Theorem 10.1]{S} given by, for $0 \le k \le n$,
\begin{align} \label{quadraticPkn}
\begin{split}
   P_{n-k,n+k}^{\a,\a,\g}(u,v) & = 2^{-n+k} P_{k,n}^{\g, - \frac12, \a}(2 v, u^2 - 2 v -1), \\
  u^{-1} P_{n-k,n+k+1}^{\a,\a,\g}(u,v) & = 2^{-n+k} P_{k,n}^{\g, \frac12, \a}(2 v, u^2 - 2 v -1). 
\end{split} 
\end{align}
In particular, setting $\g = \pm \frac12$ and $\a \mapsto \g$ and let $s = 2 xy$ and $t = x^2+y^2 -1$, 
it follows that \cite[p. 518]{S},
\begin{align} \label{P1/21/2g}
\begin{split}
P_{k,n}^{-\frac12, -\frac12, \g}(s,t) & =  
   c ( p_{n+k}^{\g,\g}(x) p_{n-k}^{\g,\g}(y) + p_{n-k}^{\g,\g}(x) p_{n+k}^{\g,\g}(y) ), \\
P_{k,n}^{\frac12, -\frac12, \g}(s,t) & =  c (x-y)^{-1} \left[ 
     p_{n+k+1}^{\g,\g}(x) p_{n-k}^{\g,\g}(y) - p_{n-k}^{\g,\g}(x) p_{n+k+1}^{\g,\g}(x) \right], \\   
P_{k,n}^{- \frac12, \frac12, \g}(s,t) & =c  (x+y)^{-1} \left[ 
     p_{n+k+1}^{\g,\g}(x) p_{n-k}^{\g,\g}(y) + p_{n-k}^{\g,\g}(x) p_{n+k+1}^{\g,\g}(x) \right], \\    
P_{k,n}^{\frac12, \frac12, \g}(s,t) & = c (x^2 - y^2)^{-1} \left[ 
     p_{n+k+2}^{\g,\g}(x) p_{n-k}^{\g,\g}(y) - p_{n-k}^{\g,\g}(x) p_{n+k+2}^{\g,\g}(x) \right],    
\end{split}
\end{align}
where $c$ is a constant proportional to $2^{n-k}$. In other words, a basis of orthogonal polynomials for 
$W_{ \pm \frac12,  \pm \frac12, \g}$ can be explicitly given in terms of Jacobi 
polynomials. 

For further results on $P_{k,n}^{\a,\b,\g}$, including explicit series expansions and recursive 
relations, see \cite{K74, KS, S}. 

It is worth to mention that the relation \eqref{Para-Square} shows that orthogonal polynomials
$P_{k,n}$ for $W_\g$ are closely related to the orthogonal polynomials with respect to $B_\g$ 
on $[-1,1]^2$, as seen by
$$
        W_\g(x+y,xy) = B_\g(x,y) |x-y|
$$
and \eqref{Para-Square}. Indeed, if $R_{k,n}$ is polynomial orthogonal to $x^l y^m$, if $\max \{l,m\}<n$,
with respect to $B_\g$ on $[-1,1]^2$, then $R_{k,n}(x,y)+ R_{k,n}(y,x)$ is a symmetric polynomial
orthogonal to  $x^l y^m + x^m y^l$, if $\max \{l,m\}<n$, for $B_\g$ since $B_\g$ is symmetric. 
Hence, under the bijection \eqref{u-v}, the polynomial
$$
     P_{k,n}(u,v) := R_{k,n}(x,y) + R_{k,n}(y,x)
$$
is an orthogonal polynomial with respect to $W_\g$ on $\Omega$. Since the mapping 
\eqref{u-v} is not linear, one needs to be careful about the degree of $P_{k,n}$. In the case of
$w = w_{\a,\b}$, the symmetric orthogonal polynomials for $B_\g$ are the $BC_2$ type 
polynomials, the precursor of the generalized Jacobi polynomials of $BC_n$ type.  

\subsection{Reproducing kernel}
Recall that $K_n(W_\g; \cdot, \cdot)$ denotes the reproducing kernel of $\Pi_n^2$ in $L^2(W_\g)$,
which we shall denote by $K_n^{(\g)}(\cdot,\cdot)$ below. In contrast to \eqref{reprodK}, we derive a
closed formula for $K_n^{(\g)}(\cdot,\cdot)$ in the case of $\g = \pm \frac12$ in this subsection. 

\begin{thm} \label{thm:reprodW}
Let $k_n(\cdot,\cdot):= k_n(w;\cdot, \cdot)$ be the kernel defined in \eqref{CDformula}. Set
$$
  u := (u_1,u_2) = (x_1+x_2, x_1 x_2) \quad  \hbox{and}\quad  v:= (v_1,v_2) = ( y_1+y_2, y_1 y_2).
$$
Then the reproducing kernel $K_n^{(-\frac12)}(\cdot, \cdot)$ for $W_{- \frac12}$ is given by 
\begin{equation} \label{eq:reprodW-1/2}
    K_n^{(-\frac12)}(u,v) =  \frac{1}{2} \left [ k_n(x_1,y_1)k_n(x_2,y_2)+ k_n(x_2,y_1) k_n(x_1,y_2) \right], 
\end{equation}
and the reproducing kernel $K_n^{(\frac12)}(\cdot, \cdot)$ for $W_{\frac12}$ is given by 
\begin{align} \label{eq:reprodW+1/2}
 K_n^{(\frac12)}(u,v) =\frac12  \frac{k_{n+1}(x_1,y_1)k_{n+1}(x_2,y_2) - 
   k_{n+1}(x_2,y_1) k_{n+1}(x_1,y_2)} {(x_1-x_2)(y_1-y_2)}. 
\end{align}
\end{thm}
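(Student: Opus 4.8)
The plan is to verify the reproducing property directly, using the explicit orthonormal bases in \eqref{OP-1/2} and \eqref{OP+1/2} together with the summation formula \eqref{reprodK}. Since $\{P_{k,m}^{(\mp1/2)} : 0\le k\le m\le n\}$ is an orthonormal basis of $\Pi_n^2$ in $L^2(W_{\mp1/2})$, we have
$$
   K_n^{(-\frac12)}(u,v) = \sum_{m=0}^n\sum_{k=0}^m P_{k,m}^{(-\frac12)}(u)P_{k,m}^{(-\frac12)}(v),
$$
and similarly for $\g=\frac12$. The task is then purely algebraic: plug in the one-variable orthogonal polynomials $p_j$ and resum to recognize the one-variable Christoffel--Darboux kernels $k_n(x,y)=\sum_{j=0}^n p_j(x)p_j(y)$ appearing on the right-hand sides of \eqref{eq:reprodW-1/2} and \eqref{eq:reprodW+1/2}.

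For the $\g=-\frac12$ case, I would substitute \eqref{OP-1/2}. Writing $u=(x_1+x_2,x_1x_2)$ and $v=(y_1+y_2,y_1y_2)$, the product $P_{k,m}^{(-\frac12)}(u)P_{k,m}^{(-\frac12)}(v)$ for $k<m$ equals $\frac12\bigl(p_m(x_1)p_k(x_2)+p_m(x_2)p_k(x_1)\bigr)\bigl(p_m(y_1)p_k(y_2)+p_m(y_2)p_k(y_1)\bigr)$, and the $k=m$ term is $p_m(x_1)p_m(x_2)p_m(y_1)p_m(y_2)$. Expanding the product for $k<m$ gives four terms; grouping them and summing over $0\le k\le m\le n$, one gets $\frac12\sum_{m}\sum_{k\le m}\bigl[p_m(x_1)p_k(x_2)p_m(y_1)p_k(y_2)+p_m(x_1)p_k(x_2)p_m(y_2)p_k(y_1)+(\text{two symmetric terms})\bigr]$. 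The key observation is that the double sum $\sum_{m=0}^n\sum_{k=0}^m p_m(a)p_k(b)p_m(c)p_k(d)$, together with the companion sum obtained by swapping the roles of the diagonal index, recombines into $\frac12 k_n(a,c)k_n(b,d)$ plus lower terms, because $\sum_{0\le k\le m\le n}(\alpha_m\beta_k+\alpha_k\beta_m) = \bigl(\sum_{k=0}^n\alpha_k\bigr)\bigl(\sum_{k=0}^n\beta_k\bigr)$ when $\alpha_k\beta_k$ is counted once. Carrying this bookkeeping through, with the $k=m$ terms accounting exactly for the diagonal overcount, yields $K_n^{(-\frac12)}(u,v)=\frac12\bigl[k_n(x_1,y_1)k_n(x_2,y_2)+k_n(x_2,y_1)k_n(x_1,y_2)\bigr]$.

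For the $\g=\frac12$ case I would substitute \eqref{OP+1/2}: with the normalization constant $c_w := a_w^{-1/2}/(2a_w^{1/2})$ we have $P_{k,m}^{(\frac12)}(u) = \sqrt{c_w}\,\bigl(p_{m+1}(x_1)p_k(x_2)-p_{m+1}(x_2)p_k(x_1)\bigr)/(x_1-x_2)$, so
$$
   K_n^{(\frac12)}(u,v) = c_w\sum_{m=0}^n\sum_{k=0}^m
     \frac{\bigl(p_{m+1}(x_1)p_k(x_2)-p_{m+1}(x_2)p_k(x_1)\bigr)\bigl(p_{m+1}(y_1)p_k(y_2)-p_{m+1}(y_2)p_k(y_1)\bigr)}{(x_1-x_2)(y_1-y_2)}.
$$
Here I would pull out the fixed factor $1/\bigl((x_1-x_2)(y_1-y_2)\bigr)$ and apply the same resummation identity to the numerator, noting that the antisymmetric combination removes the need to separate a $k=m$ diagonal term and instead produces a clean $2\times 2$-determinant structure $k_{n+1}(x_1,y_1)k_{n+1}(x_2,y_2)-k_{n+1}(x_2,y_1)k_{n+1}(x_1,y_2)$ — this is exactly the Cauchy--Binet/Andreief-type expansion of a determinant of sums. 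I also need to check that $c_w$ is indeed $\frac12$; this follows from \eqref{Para-Square} applied to the normalization of $B_{\pm1/2}$, i.e. comparing $a_w^{-1/2}$ and $a_w^{1/2}$, or can be read off by testing both sides of \eqref{eq:reprodW+1/2} against the constant polynomial. An alternative, cleaner route for this case is to use the Christoffel--Darboux formula \eqref{CDformula} to replace each $\bigl(p_{m+1}(x_1)p_m(x_2)-\cdots\bigr)/(x_1-x_2)$-type difference, but since the index structure here couples $m+1$ with $k$ rather than $m$, the direct resummation is more transparent.

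The main obstacle is the combinatorial bookkeeping in the resummation step: one must carefully track which terms are counted once versus twice as $k$ ranges up to $m$, and verify that the $k=m$ (respectively, the antisymmetry-induced cancellation) accounts exactly for the discrepancy between $\sum_{0\le k\le m\le n}$ and $\frac12\bigl(\sum_{k,m\le n} - \sum_{k=m\le n}\bigr)$. Everything else — the change of variables $u\leftrightarrow(x_1,x_2)$, the density of polynomials, the orthonormality in \eqref{OP-1/2}–\eqref{OP+1/2} — is already supplied by the earlier sections. A sanity check at the end: both formulas should be symmetric under $x_1\leftrightarrow x_2$ and under $y_1\leftrightarrow y_2$ (as they must be, since $u$, $v$ are symmetric functions), and \eqref{eq:reprodW+1/2} should have no genuine singularity on the diagonals $x_1=x_2$ or $y_1=y_2$, which is visible from the determinant form vanishing there.
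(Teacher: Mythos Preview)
Your approach is correct and genuinely different from the paper's. The paper does not compute $K_n^{(\pm\frac12)}$ by summing the orthonormal basis as in \eqref{reprodK}; instead it takes the right-hand side of \eqref{eq:reprodW-1/2} (respectively \eqref{eq:reprodW+1/2}) as a \emph{candidate}, checks that it is a polynomial of degree $n$ in each of $u$ and $v$, and then verifies the reproducing identity $\int_\Omega \widehat K_n(u,v)P_{j,m}^{(\mp\frac12)}(v)W_{\mp\frac12}(v)\,dv = P_{j,m}^{(\mp\frac12)}(u)$ directly by invoking the one-variable reproducing property of $k_n$. Uniqueness of the reproducing kernel then finishes the proof. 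Your route instead expands $\sum_{m,k} P_{k,m}^{(\mp\frac12)}(u)P_{k,m}^{(\mp\frac12)}(v)$ and resums; the key algebraic step is the symmetrization identity $\sum_{0\le k<m\le n}(\alpha_m\beta_k+\alpha_k\beta_m)+\sum_{m\le n}\alpha_m\beta_m = (\sum\alpha_j)(\sum\beta_j)$ in the $\g=-\tfrac12$ case, and the determinantal expansion $\sum_{j,k=0}^{n+1}p_j(y_1)p_k(y_2)\bigl[p_j(x_1)p_k(x_2)-p_j(x_2)p_k(x_1)\bigr] = \sum_{0\le k<j\le n+1} E_{j,k}(x)E_{j,k}(y)$ in the $\g=\tfrac12$ case, after which the index substitution $j=m+1$ exactly matches the summation range in \eqref{OP+1/2}. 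Both arguments are short; the paper's is slightly slicker because it sidesteps the combinatorial bookkeeping and pins down the overall constant automatically via the reproducing property, whereas in your approach the constant $c_w$ in front of \eqref{eq:reprodW+1/2} has to be tracked through the normalizations of $W_{\pm\frac12}$ (your proposed check by evaluating both sides on the constant polynomial $P_{0,0}^{(\frac12)}$ is the right way to nail it down; the claim that ``$c_w=\tfrac12$ follows from \eqref{Para-Square}'' is too quick, since $a_w^{-1/2}/(2a_w^{1/2})$ is not identically $\tfrac12$ without a normalization convention on $w$ --- the paper itself is tacit on this point).
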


\begin{proof}
Denote the right hand side of \eqref{eq:reprodW-1/2} by $\wh k_n( (x_1,x_2), (y_1,y_2))$. 
By the definition of $k_n(\cdot,\cdot)$ in \eqref{CDformula}, for a fixed $y_1,y_2$, we have
$$
  \wh k_n( (x_1,x_2), (y_1,y_2)) = \frac12 \sum_{k=0}^n \sum_{j=0}^n p_k(y_1)p_j(y_1)
     \left[ p_k(x_1)p_j(x_2) + p_k(x_2)p_j(x_1)\right],
$$
which shows, upon setting $u_1 = x_1+x_2$, $u_2 = x_1x_2$, that $\wh k_n( (x_1,x_2), (y_1,y_2))$
is a polynomial of degree $n$ in $(u_1,y_2)$. Hence, if we define $\wh K_n((u_1,u_2), (v_1,v_2)) = 
\wh k_n( (x_1,x_2), (y_1,y_2))$ under the mapping $(u_1,u_2) \mapsto (x_1+x_2,x_1 x_2)$ 
and $(v_1,v_2) \mapsto (y_1+y_2, y_1y_2)$, then $\wh K$ is a polynomial of degree $n$ in $(u_1,u_2)$
and, by symmetry, in $(v_1,v_2)$. Thus, we only have to verify the reproducing property. 
For $0 \le j \le m \le n$, the reproducing property of $k_n(\cdot,\cdot)$ implies immediately 
\begin{align*}
 & \int_{\Omega}  \wh K_n( (u_1,u_2), (v_1,v_2)) P_{j,m}^{(- \frac12)}(v_1,v_2)
          W_{-\frac12}(v_1,v_2)dv_1dv_2 \\
          =  &  \int_{[-1,1]^2}  \wh k_n( (x_1,x_2), (y_1,y_2)) P_{j,m}^{(- \frac12)}(y_1+y_2, y_1y_2) 
          w(y_1)w(y_2) dy_1dy_2 \\
          = & \, \frac{1}{\sqrt{2}} \left[ p_m(x_1)  p_j(x_2) +  p_j(x_1) p_m(x_2)\right]  = P_{j,m}^{(- \frac12)}(u_1,u_2),
\end{align*}
which shows that $\wh K_n( (u_1,u_2), (v_1,v_2))$ is the reproducing kernel of $\Pi_n^2$, so
that \eqref{eq:reprodW-1/2} holds. 

Denote now the right hand side of \eqref{eq:reprodW+1/2} by $\wh k_n( (x_1,x_2), (y_1,y_2))$. 
Then, for fixed $(y_1, y_2)$, it is easy to see that 
$$
  \wh k_n( (x_1,x_2), (y_1,y_2)) = \sum_{j=0}^{n+1} \sum_{k=0}^{n+1} 
     \frac{p_j(y_1)p_k(y_2)}{y_1-y_2} \frac{ p_j(x_1) p_k(x_2) -p_j(x_2) p_k(x_1)}{x_1-x_2}, 
$$
which shows, since the terms for $k = j$ are zero, that $\wh k_n( (x_1,x_2),(y_1,y_2))$ 
is a polynomial of degree $n$ in $(u_1,u_2)$, where $u_1 = x_1+x_2$ and $u_2 = x_1 x_2$. 
By symmetry, the same holds for $(y_1,y_2)$ with $(x_1,x_2)$ fixed. Thus, it remains to 
prove the reproducing property, which works similarly as in the case of $\g = -1/2$ upon 
using the fact that $(y_1-y_2)^2$ in $W_{\frac12}(u_1,u_2) =  (y_1-y_2)^2 w(y_1)w(y_2)$ 
cancels the denominators in both $\wh k_n(\cdot,\cdot)$ and $P_{j,m}^{\frac12} (y_1+y_2,y_1y_2)$. 
\end{proof}

The closed formula for the reproducing kernel allows us to study the convergence of the Fourier
orthogonal expansions, which will be discussed in Section 5. 
 
\section{Orthogonal polynomials for weight functions on $[-1,1]^2$} 
\setcounter{equation}{0}

In this section we study orthogonal polynomials for the family of weight functions defined 
in \eqref{Wgamma} on $[-1,1]^2$, which are closely related to $W_\g$. The orthogonal 
polynomials are given in the first subsection, their further properties are in the second 
subsection, and a compact formula for the reproducing kernel is in the third subsection. 

\subsection{Orthogonal polynomials}
Let $w$ be the weight function on $[-1,1]$ and let $W_\g$ be the weight function defined 
in \eqref{Wgamma}. We then define 
\begin{equation} \label{CWgamma}
        \CW_\g (x,y) := W_\g (2 x y, x^2+y^2 -1)|x^2-y^2|, \qquad (x,y) \in [-1,1]^2, 
\end{equation}
which is the quadratic transform that has appeared in \eqref{quadraticPkn}. 
Let $\Omega$ be the domain \eqref{Omega} of $W_\g$. The fact that $(x,y) \in [-1,1]^2$ 
implies that $(2xy, x^2+y^2 -1) \in \Omega$ is shown in the lemma below. In the case 
that $W_\g = W_{\a,\b,\g}$, the weight function $\CW_\g$ becomes, up to a constant, 
$\CW_{\a,\b,\g}$ defined in \eqref{CWabg}, which we restate as, 
\begin{align} \label{CWabc}
   \CW_{\a,\b, \g}(x,y) : = 2 a_{\a,\b,\g}4^\g |x-y|^{2\a +1}  |x + y|^{2\b +1}   (1-x^2)^\g (1-y^2)^\g, 
\end{align}
where $\a, \b, \g > -1$, $\a + \g + \frac12> -1$ and $\b+ \g + \frac12> -1$. These conditions
on the parameters, ensuring the integrability of $\CW_{\a,\b,\g}$, are the same as those for
$W_{\a,\b,\g}$. The weight function $\CW_\g$, and $\CW_{\a,\b,\g}$, is normalized as 
shown below. We define a region $\Omega^* : = \{ (x,y): - 1  < x < y <1\}$.

\begin{lem} \label{Int-Para-cube}
The mapping $(x,y)  \mapsto (2 x y , x^2+y^2 -1)$ is a bijection from $\Omega^*$ onto 
$\Omega$. Furthermore,
\begin{align} \label{Int-P-Q}
 \int_{\Omega} f(u,v) W_{\g} (u,v) du dv  = \int_{[-1,1]^2} f(2xy, x^2+y^2 -1) \CW_{\g}(x,y) dx dy.
\end{align}
\end{lem}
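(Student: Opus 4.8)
The statement has two halves, the bijectivity of $\Phi:(x,y)\mapsto(2xy,\,x^2+y^2-1)$ and the integral formula \eqref{Int-P-Q}, and I would base both on the three identities obtained by setting $u=2xy$, $v=x^2+y^2-1$:
\[
  1+u+v=(x+y)^2,\qquad 1-u+v=(x-y)^2,\qquad u^2-4v=4(1-x^2)(1-y^2).
\]
Here I take $\Omega^*$ to be a fundamental domain of $\Phi$, namely $\{(x,y):-1<x<y<1,\ x+y>0\}$; without the last inequality $\Phi$ is only $2$-to-$1$ on $\{-1<x<y<1\}$. On $\Omega^*$ one has $x\neq\pm y$ and $|x|,|y|<1$, so all three right-hand sides above are strictly positive, whence $\Phi(\Omega^*)\subseteq\Omega$. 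Conversely, for $(u,v)\in\Omega$ the first two identities together with the sign conditions $x<y$ and $x+y>0$ force
\[
  x=\tfrac12\bigl(\sqrt{1+u+v}-\sqrt{1-u+v}\bigr),\qquad
  y=\tfrac12\bigl(\sqrt{1+u+v}+\sqrt{1-u+v}\bigr),
\]
and a one-line computation gives $\Phi(x,y)=(u,v)$; this proves injectivity, and for surjectivity it remains only to check $(x,y)\in\Omega^*$. The inequality $x<y$ is clear, $y<1$ reduces after squaring to exactly $u^2-4v>0$, and $x>-1$ follows once one knows the bounds $-2<u<2$ and $v<1$ on $\Omega$, which are immediate from the fact recorded earlier in the paper that $\Omega$ is the image of $\triangle=\{-1<x<y<1\}$ under $(x,y)\mapsto(x+y,xy)$.

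For the integral formula I would compute the Jacobian,
\[
  \frac{\partial(u,v)}{\partial(x,y)}=\det\begin{pmatrix}2y&2x\\2x&2y\end{pmatrix}=4(y^2-x^2),
\]
so that $du\,dv=4|x^2-y^2|\,dx\,dy$ on $\Omega^*$. Since $\CW_\g=(W_\g\circ\Phi)\,|x^2-y^2|$ by \eqref{CWgamma}, the change of variables $(u,v)=\Phi(x,y)$ on $\Omega^*$ makes the factor $|x^2-y^2|$ cancel and gives
\[
  \int_{\Omega^*}f(2xy,x^2+y^2-1)\,\CW_\g(x,y)\,dx\,dy=\tfrac14\int_\Omega f(u,v)\,W_\g(u,v)\,du\,dv .
\]
Finally, $[-1,1]^2$ is, up to a null set, the union of $\Omega^*$ and its images under the three maps $(x,y)\mapsto(y,x)$, $(x,y)\mapsto(-x,-y)$, $(x,y)\mapsto(-y,-x)$, each of which has unit Jacobian and leaves the integrand $f(2xy,x^2+y^2-1)\,\CW_\g(x,y)$ invariant, since it fixes each of the expressions $2xy$, $x^2+y^2-1$, and $|x^2-y^2|$. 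Hence the integral over $[-1,1]^2$ is four times the integral over $\Omega^*$, which is exactly \eqref{Int-P-Q}; setting $f\equiv1$ and using that $W_\g$ is normalized shows $\CW_\g$ is normalized as well.

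The step I expect to be the real obstacle is the surjectivity half: confirming that the explicitly inverted point $(x,y)$ actually lies in $\Omega^*$, which is where the boundedness of $\Omega$ must be invoked; the rest is bookkeeping. A second point requiring care — flagged above — is the exact multiplicity of $\Phi$, since it is the constant $4$ in the symmetrization (equivalently, the condition $x+y>0$ built into $\Omega^*$) that makes the normalization of $\CW_\g$ come out correctly.
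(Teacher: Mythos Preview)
Your argument is correct and, in fact, more careful than the paper's. The paper's proof is very brief: it substitutes $x=\cos\theta$, $y=\cos\phi$ and observes that
\[
  2xy=\cos(\theta-\phi)+\cos(\theta+\phi),\qquad x^2+y^2-1=\cos(\theta-\phi)\cos(\theta+\phi),
\]
so that $(2xy,x^2+y^2-1)$ has the form $(\xi+\eta,\xi\eta)$ with $\xi,\eta\in[-1,1]$ and hence lies in $\Omega$; it then records the Jacobian $du\,dv=4|x^2-y^2|\,dx\,dy$ and declares the formula proved. You instead work algebraically with the identities $1\pm u+v=(x\pm y)^2$ and $u^2-4v=4(1-x^2)(1-y^2)$, explicitly invert $\Phi$, and check that the inverse lands in $\Omega^*$. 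Both routes lead to the same Jacobian-plus-symmetry argument for the integral.

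The point you flag about multiplicity is well taken: with the paper's definition $\Omega^*=\{-1<x<y<1\}$ the map $\Phi$ is generically two-to-one (since $(x,y)$ and $(-y,-x)$ both lie in that triangle and have the same image), so the bijectivity assertion in the lemma is literally false as stated. Your fix of adding the constraint $x+y>0$ repairs this, and your four-fold symmetrization over $[-1,1]^2$ then gives the correct normalization. The integral formula \eqref{Int-P-Q} itself is unaffected by this discrepancy, since the two-to-one count on the paper's $\Omega^*$ combines with the two-fold reflection $(x,y)\mapsto(y,x)$ to give the same factor of $4$ that you obtain from your four-piece decomposition.
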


\begin{proof}
For $(x,y) \in [-1,1]^2$, let us write $x = \cos \t$ and $y = \cos \phi$, $0 \le \t, \phi \le \pi$. Then 
it is easy to see that
\begin{equation} \label{theta-phi}
    2 x y = \cos (\t - \phi) + \cos (\t + \phi), \quad x^2+y^2 -1 = \cos (\t -  \phi) \cos (\t + \phi), 
\end{equation}
from which it follows readily that $(2xy, x^2+y^2 -1) \in \Omega$. For the change of 
variable $u= 2xy$ and $v = x^2+y^2 -1$, we have $du dv = 4 |x^2-y^2| dxdy$, 
from which the stated formula follows. 
\end{proof}

As a more general example, we consider $w$ being a generalized Jacobi weight.

\begin{lem}\label{lem:GJ}
Let $w \in GJ$ be defined as in \eqref{GJ}. Then
\begin{align} \label{GJ-exp}
   \CW_{\g}(x,y) =  \Psi(x,y) |x-y|^{2 \g_0+1} & \prod_{k=1}^r \left |1+t_k^2 - (x - t_k)^2 - (y-t_k)^2 \right|^{ \g_k} \\
         & \qquad \times  |x+y|^{2 \g_{r+1}+1} (1-x^2)^\g (1-y^2)^\g, \notag
\end{align}
where $\Psi(\cos \t, \cos\phi) = \psi(\cos(\t-\phi)) \psi(\cos(\t+\phi))$. 
\end{lem}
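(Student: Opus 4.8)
The plan is to unfold the definition \eqref{CWgamma} of $\CW_\g$ with $w$ a generalized Jacobi weight and to track each factor through the quadratic substitution $u = 2xy$, $v = x^2+y^2-1$. First I would recall from \eqref{Wgamma} that, up to the normalization constant, $W_\g(u,v) = 2a_w^\g\, w(p)\,w(q)\,(u^2-4v)^\g$, where $p,q$ are the two roots of $T^2 - uT + v = 0$; equivalently, under the first substitution \eqref{u-v} one has $u = p+q$, $v = pq$. Composing with $u = 2xy$, $v = x^2+y^2-1$ and using \eqref{theta-phi}, with $x = \cos\t$, $y = \cos\phi$, one gets $p = \cos(\t-\phi)$ and $q = \cos(\t+\phi)$ (in some order), since these are precisely the numbers whose sum is $\cos(\t-\phi)+\cos(\t+\phi) = 2xy$ and whose product is $\cos(\t-\phi)\cos(\t+\phi) = x^2+y^2-1$. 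Also $u^2-4v = (p-q)^2 = (\cos(\t-\phi)-\cos(\t+\phi))^2 = (2\sin\t\sin\phi)^2$, and $1-x^2 = \sin^2\t$, $1-y^2 = \sin^2\phi$, so $(u^2-4v)^\g = 4^\g (1-x^2)^\g(1-y^2)^\g$; combined with the extra Jacobian factor $|x^2-y^2|$ this produces the leading behavior already visible in \eqref{CWabc}. So the whole problem reduces to substituting $p = \cos(\t-\phi)$, $q=\cos(\t+\phi)$ into the explicit form \eqref{GJ} of $w(p)$ and $w(q)$ and simplifying.

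Next I would write $w(p)w(q)$ out factor by factor using \eqref{GJ}. The continuous factor contributes $\psi(p)\psi(q) = \psi(\cos(\t-\phi))\psi(\cos(\t+\phi)) =: \Psi(\cos\t,\cos\phi) = \Psi(x,y)$, which is exactly the definition given in the statement. The endpoint factors $(1-p)^{\g_0}(1-q)^{\g_0}$ combine, via $(1-\cos(\t-\phi))(1-\cos(\t+\phi)) = (\cos\t-\cos\phi)^2 = (x-y)^2$ (a half-angle identity, the same computation that underlies \eqref{OP+1/2}), into $|x-y|^{2\g_0}$; similarly $(1+p)^{\g_{r+1}}(1+q)^{\g_{r+1}}$ gives $(1+\cos(\t-\phi))(1+\cos(\t+\phi)) = (\cos\t+\cos\phi)^2 = (x+y)^2$, hence $|x+y|^{2\g_{r+1}}$. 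For each interior node $x_i$ the two factors $|p - x_i|^{\g_i}|q - x_i|^{\g_i}$ combine, since $|p-x_i||q-x_i| = |(p-x_i)(q-x_i)| = |pq - x_i(p+q) + x_i^2| = |v - x_i u + x_i^2|$, and substituting $u = 2xy$, $v = x^2+y^2-1$ gives $|x^2+y^2-1 - 2x_i xy + x_i^2| = |1 + x_i^2 - (x-x_i)^2 - (y-x_i)^2|$ after expanding $(x-x_i)^2 + (y-x_i)^2 = x^2+y^2 - 2x_i(x+y) + 2x_i^2$ — wait, that has $x_i(x+y)$, not $x_i xy$; the correct grouping is $x^2 + y^2 + x_i^2 - 1 - 2x_i xy$, and one checks directly that $1 + x_i^2 - (x-x_i)^2 - (y-x_i)^2$ does \emph{not} literally match unless one is careful, so I would instead verify the identity $v - x_i u + x_i^2 = (x^2 - x_i)(\cdot)$ is wrong and simply present $|x^2+y^2-1 - 2x_i xy + x_i^2|$; comparing with the stated $|1+t_k^2 - (x-t_k)^2 - (y-t_k)^2|$ one sees $1 + t_k^2 - (x-t_k)^2 - (y-t_k)^2 = 1 + t_k^2 - x^2 + 2t_k x - t_k^2 - y^2 + 2t_k y - t_k^2 = 1 - x^2 - y^2 + 2t_k(x+y) - t_k^2$, which matches $|v - x_i u + x_i^2|$ only if $u = -(x+y)$ and $v = -(x^2+y^2-1)$; this sign discrepancy is a genuine subtlety I will have to reconcile — most likely by noting that the nodes $x_i$ are being re-parametrized or that there is an implicit reflection, and the statement's formula with $t_k$ is to be read accordingly.

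Assembling all the pieces: $\CW_\g(x,y) = 4^{-\g}\cdot(\text{normalization})\cdot 2a_w^\g \cdot \Psi(x,y)\cdot |x-y|^{2\g_0}\cdot\prod_k|\cdots|^{\g_k}\cdot|x+y|^{2\g_{r+1}}\cdot 4^\g(1-x^2)^\g(1-y^2)^\g\cdot|x^2-y^2|$, and the extra $|x^2-y^2| = |x-y||x+y|$ raises the exponents of $|x-y|$ and $|x+y|$ by one each, yielding $|x-y|^{2\g_0+1}$ and $|x+y|^{2\g_{r+1}+1}$, which is precisely \eqref{GJ-exp}. The main obstacle I anticipate is exactly the bookkeeping of the interior-node factor flagged above: making sure the change of variables $(p,q)\mapsto(x,y)$ sends $|p-t_k||q-t_k|$ to the claimed expression $|1+t_k^2-(x-t_k)^2-(y-t_k)^2|$, which requires either a careful half-angle manipulation or an honest reconciliation of the sign/parametrization of the nodes; everything else is routine algebra with $\cos(\t\pm\phi)$ identities that the paper has already used for the endpoint factors. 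The statement-level bijectivity and the Jacobian are already established in Lemma~\ref{Int-Para-cube}, so no topological work is needed here.
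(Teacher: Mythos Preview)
Your approach is exactly right, and it is the only natural one; the paper in fact states this lemma without proof, so there is nothing to compare against. Your computations for the $\psi$-factor, the endpoint factors $(1\mp p)(1\mp q) = (x\mp y)^2$, the term $(u^2-4v)^\g = 4^\g(1-x^2)^\g(1-y^2)^\g$, and the absorption of the Jacobian $|x^2-y^2|$ into the exponents are all correct.

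The discrepancy you flagged for the interior nodes is genuine, but it is a typographical error in the paper, not a gap in your argument. With $p+q=2xy$ and $pq=x^2+y^2-1$ one has
\[
|p-t_k|\,|q-t_k| \;=\; \bigl|pq - t_k(p+q)+t_k^2\bigr| \;=\; \bigl|x^2+y^2-2t_kxy+t_k^2-1\bigr|,
\]
whereas the printed expression expands to $\bigl|1-t_k^2-x^2-y^2+2t_k(x+y)\bigr|$, which is different (take $t_k=\tfrac12$, $x=y=\tfrac12$ to see the mismatch). There is no hidden reflection or reparametrization to chase: simply replace the interior-node factor in \eqref{GJ-exp} by $\bigl|x^2+y^2-2t_kxy+t_k^2-1\bigr|^{\g_k}$ (equivalently $\bigl|(xy-t_k)^2-(1-x^2)(1-y^2)\bigr|^{\g_k}$) and your assembly in the final paragraph is a complete proof. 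The lemma is not used elsewhere in the paper, so the typo is harmless.
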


We consider orthogonal polynomials with respect to the inner product 
\begin{align} \label{IpCW}
  \la f, g \ra_{\CW_\g} :=  \int_{[-1,1]^2} f(x,y) g(x,y) \CW_\g (x,y) dxdy. 
\end{align}
Let $\CV_{n}(\CW_{\g})$ denote the space of orthogonal polynomials of degree $n$ 
with respect to the inner product $\la \cdot,\cdot \ra_{\CW_\g}$. It turns out that a basis 
of $\CV_{n}(\CW_{\g})$ can be expressed in terms of orthogonal polynomials with 
respect to $W_{\g}$ and three other related weight functions. Recall that $\la \cdot,\cdot\ra_W$
is defined in \eqref{eq:inner}. We define three other weight  functions 
\begin{align}\label{Wij}
\begin{split}
  W_\g^{(1,1)}(u,v) &  :=  2 a_{w^{(1,1)}}^\g (1-u+v)(1+u+v)W_\g(u,v), \\
  W_\g^{(1,0)}(u,v)  & := 2 a_{w^{(1,0)}}^\g (1-u+v)W_\g(u,v), \\
  W_\g^{(0,1)}(u,v)  & := 2  a_{w^{(0,1)}}^\g (1+u+v)W_\g(u,v), 
\end{split} \quad (u,v) \in \Omega,
\end{align} 
where we define 
\begin{equation} \label{eq:wij}
          w^{(i,j)}(x) := (1-x)^i (1+x)^j w(x), \quad i,j = 0,1.
\end{equation}
Under the change of variables $u = x+y$ and $v = xy$, $1-u+v = (1-x)(1-y)$ and 
$1+u+v = (1+x)(1+y)$. The three weight functions in \eqref{Wij} are normalized 
so that $\int_\Omega W_\g^{i,j}(u,v)dudv = 1$. Clearly these three weight functions
are of the same type as $W_\g$. 

In the following theorem, we denote by $\{P_{k,n}^{(\g)}: 0 \le k \le n\}$ an orthonormal
basis of $\CV_n(W_{\g})$ under $ \la \cdot, \cdot \ra_{W_{\g}}$. For $0 \le k \le n$, we 
further denote by $P_{k,n}^{(\g),1,1}$, $P_{k,n}^{(\g),1,0,}$, $P_{k,n}^{(\g),0,1}$ the 
orthonormal polynomials of degree $n$ with respect to $\la f, g \ra_W$ for $W = W_\g^{(1,1)}$, 
$W_\g^{(1,0)}$, $W_\g^{(0,1)}$, respectively. 
 
\begin{thm} \label{thm:Qop}
For $n = 0,1,\ldots$, an orthonormal basis of $\CV_{2n}(\CW_{\g})$ is given by
\begin{align} \label{Qeven2}
\begin{split}
         {}_1Q_{k,2n}^{(\g)}(x,y):= & P_{k,n}^{(\g)}(2xy, x^2+y^2 -1), \qquad 0 \le k \le n, \\
         {}_2Q_{k,2n}^{(\g)}(x,y) := & b_\g^{(1,1)}(x^2-y^2)  P_{k,n-1}^{(\g),1,1}(2xy, x^2+y^2 -1),  \quad 0 \le k \le n-1, 
 \end{split}
\end{align}
and an orthonormal basis of $\CV_{2n+1}(\CW_{\g})$ is given by 
\begin{align} \label{Qodd2}
 \begin{split}
     &  {}_1Q_{k,2n+1}^{(\g)}(x,y):= b_\g^{(0,1)}(x+ y) P_{k,n}^{(\g),0,1}(2xy, x^2+y^2 -1), \qquad  0 \le k \le n,  \\
     &  {}_2Q_{k,2n+1}^{(\g)}(x,y):= b_\g^{(1,0)}(x-y) P_{k,n}^{(\g),1,0}(2xy, x^2+y^2 -1), \qquad 0 \le k \le n, 
 \end{split}
\end{align}
where $b_\g^{(i,j)}:= \sqrt{a^\g_{w^{(i,j)}} /a^\g_w}$ for $i,j = 0,1$. 
\end{thm}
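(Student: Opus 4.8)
The plan is to verify directly that each listed family lies in the correct space $\CV_N(\CW_\g)$, is mutually orthonormal, and has the right cardinality, so that together the families exhaust an orthonormal basis. The key tool throughout is Lemma~\ref{Int-Para-cube}: the substitution $u = 2xy$, $v = x^2+y^2-1$ turns $\CW_\g$-integrals on $[-1,1]^2$ into $W_\g$-integrals on $\Omega$ (and, when the extra factors $(x\pm y)$ or $(x^2-y^2)$ are present, into $W_\g^{(i,j)}$-integrals, using that $1-u+v = (1-x)(1-y)$ and $1+u+v = (1+x)(1+y)$, while $u^2-4v = (x-y)^2$ and a short computation gives $(x+y)^2$ times the remaining weight for the $W_\g^{(0,1)}$ case). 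Concretely, the factor $(x-y)^2$ converts $w(x)w(y)$ into $w^{(1,0)}(x)w^{(1,0)}(y)$ up to the normalization captured by $b_\g^{(1,0)}$, and similarly for the other superscripts; this is exactly why the three auxiliary weights in \eqref{Wij} were introduced and why the constants $b_\g^{(i,j)}$ appear.

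First I would settle the \emph{degree count}. A polynomial $P(u,v)$ of total degree $m$ in $(u,v)$, composed with $u = 2xy$ (degree $2$) and $v = x^2+y^2-1$ (degree $2$), becomes a polynomial of degree $\le 2m$ in $(x,y)$; moreover it is symmetric under $(x,y)\mapsto(y,x)$ and even under $(x,y)\mapsto(-x,-y)$. Thus ${}_1Q_{k,2n}^{(\g)}$ has degree $\le 2n$; ${}_2Q_{k,2n}^{(\g)}$ picks up an extra factor $x^2-y^2$ of degree $2$, giving degree $\le 2(n-1)+2 = 2n$; and the two families in \eqref{Qodd2} have degree $\le 2n+2\cdot 0$... wait — the factors $x\pm y$ have degree $1$, so ${}_1Q_{k,2n+1}^{(\g)}$ and ${}_2Q_{k,2n+1}^{(\g)}$ have degree $\le 2n+1$. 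One then checks the top-degree parts are nonzero (inheriting the leading terms $u^{n-k}v^k$ of the $P_{k,n}$'s times the prefactor), so these are genuine degree-$2n$ and degree-$2n+1$ polynomials. Counting: $(n+1) + n = 2n+1 = \dim\CV_{2n}(\CW_\g)$, and $(n+1)+(n+1) = 2n+2 = \dim\CV_{2n+1}(\CW_\g)$. So the cardinalities are exactly right, and it suffices to prove mutual orthonormality together with orthogonality to all lower-degree polynomials.

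Next I would verify \emph{orthonormality within and across the families}. For two ${}_1Q$'s of the same degree $2n$, the change of variables turns $\la {}_1Q_{k,2n}^{(\g)}, {}_1Q_{j,2n}^{(\g)}\ra_{\CW_\g}$ into $\la P_{k,n}^{(\g)}, P_{j,n}^{(\g)}\ra_{W_\g} = \delta_{k,j}$. For two ${}_2Q$'s, the factor $(x^2-y^2)^2$ is absorbed into the weight, converting the integral into $(b_\g^{(1,1)})^2 (a_{w^{(1,1)}}^\g/a_w^\g)^{-1}\la P_{k,n-1}^{(\g),1,1}, P_{j,n-1}^{(\g),1,1}\ra_{W_\g^{(1,1)}} = \delta_{k,j}$ by the definition of $b_\g^{(1,1)}$; the odd-degree cases are identical with $(x\pm y)^2$ and the weights $W_\g^{(0,1)}$, $W_\g^{(1,0)}$. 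The \emph{cross terms} ${}_1Q \perp {}_2Q$ are the crux: the product ${}_1Q_{k,2n}^{(\g)}\,{}_2Q_{j,2n}^{(\g)}$ carries a single factor $x^2-y^2$, which is \emph{antisymmetric} under $(x,y)\mapsto(y,x)$, while $\CW_\g$ and the other factors are symmetric — so the integrand is odd under the swap and the integral over $[-1,1]^2$ vanishes. The same parity argument handles ${}_1Q_{k,2n+1}^{(\g)}\perp{}_2Q_{j,2n+1}^{(\g)}$, where the product has the single antisymmetric factor $(x+y)(x-y) = x^2-y^2$.

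Finally I would check \emph{orthogonality to lower degree}. Let $R$ be any polynomial of degree $< N$. Decompose $R = R_s + R_a$ into its symmetric and antisymmetric parts under $(x,y)\mapsto(y,x)$; since $\CW_\g$ is symmetric, $\la {}_1Q, R_a\ra_{\CW_\g} = \la {}_2Q, R_s\ra_{\CW_\g} = 0$ automatically by parity, so it remains to pair ${}_1Q$ with $R_s$ and ${}_2Q$ with $R_a$. Any symmetric $R_s$ of degree $\le N-1$, further reduced by the central-symmetry obstruction if $N$ is even versus odd, is a polynomial in $(x+y)$ and $xy$ (and for the even case also built from $u = 2xy$, $v = x^2+y^2-1$ up to degree $n-1$ in $(u,v)$); pulling back via the substitution shows $\la {}_1Q_{k,2n}^{(\g)}, R_s\ra_{\CW_\g}$ is a $W_\g$-inner product of $P_{k,n}^{(\g)}$ against a polynomial of degree $\le n-1$ in $(u,v)$, hence $0$ by \eqref{eq:biangle-ortho}. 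For the antisymmetric case, write $R_a = (x^2-y^2)S$ (in the even-degree setting) or $R_a = (x\pm y)S$ (odd setting) with $S$ symmetric of the appropriate reduced degree; then the extra factor merges with the prefactor of ${}_2Q$ and with the weight to produce a $W_\g^{(i,j)}$-inner product of $P_{k,n-1}^{(\g),i,j}$ against a lower-degree polynomial, which vanishes by orthogonality for that weight. The main obstacle is precisely this last bookkeeping: making airtight the claim that every symmetric (resp.\ antisymmetric) polynomial of degree $< N$ on $[-1,1]^2$ pulls back, under the degree-two map $(x,y)\mapsto(2xy,x^2+y^2-1)$, to a polynomial of low enough degree in $(u,v)$ — in particular tracking the interplay between the swap-symmetry, the central symmetry $(x,y)\mapsto(-x,-y)$, and the factor of $2$ in the degree. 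Once that degree accounting is pinned down, the orthogonality statements all reduce, via Lemma~\ref{Int-Para-cube}, to the already-known orthogonality of the Koornwinder-type polynomials $P_{k,n}^{(\g)}$ and their $W_\g^{(i,j)}$ analogues.
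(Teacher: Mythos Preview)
Your overall strategy---change of variables via Lemma~\ref{Int-Para-cube} plus parity under $(x,y)\mapsto(y,x)$ and $(x,y)\mapsto(-x,-y)$---is exactly the paper's. Two points to correct.

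First, a formula slip. The identities $1-u+v=(1-x)(1-y)$, $1+u+v=(1+x)(1+y)$, $u^2-4v=(x-y)^2$ that you quote belong to the \emph{other} substitution $u=x+y$, $v=xy$ of \eqref{u-v}. Under the map of Lemma~\ref{Int-Para-cube}, namely $s=2xy$, $t=x^2+y^2-1$, the correct relations are $1-s+t=(x-y)^2$ and $1+s+t=(x+y)^2$. So the squared prefactors $(x\mp y)^2$ and $(x^2-y^2)^2$ become the factors $1\pm s+t$ and $(1-s+t)(1+s+t)$ that convert $W_\g(s,t)$ into $W_\g^{(1,0)}$, $W_\g^{(0,1)}$, $W_\g^{(1,1)}$. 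Your line ``$(x-y)^2$ converts $w(x)w(y)$ into $w^{(1,0)}(x)w^{(1,0)}(y)$'' is not right as stated; the conversion occurs at the level of $W_\g$ on $\Omega$, not at the level of the one-variable $w$.

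Second, the paper avoids your ``orthogonality to lower degree'' bookkeeping entirely. Rather than decomposing an arbitrary $R\in\Pi_{N-1}^2$ into isotypic pieces and tracking pullback degrees, it simply proves that \emph{all} the $Q$'s, across every degree and both families, are pairwise orthonormal: within each type the pairing reduces (for all $m,n$) to $\langle P_{k,n}^{(\g),i,j},P_{j',m}^{(\g),i,j}\rangle_{W_\g^{(i,j)}}=\delta_{k,j'}\delta_{n,m}$, while cross-type pairings vanish by one of the two sign-change symmetries. Since the $Q$'s of degree $\le N-1$ span $\Pi_{N-1}^2$ by your own cardinality count, orthogonality to $\Pi_{N-1}^2$ then comes for free. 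Your direct route is valid (and the four-isotype decomposition you are heading toward is exactly $\mathbb{C}[x,y]=\mathbb{C}[s,t]\oplus(x^2-y^2)\mathbb{C}[s,t]\oplus(x+y)\mathbb{C}[s,t]\oplus(x-y)\mathbb{C}[s,t]$), but the paper's packaging is shorter.
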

 
\begin{proof}
These polynomials evidently form a basis if they are orthogonal.  By Lemma \ref{Int-Para-cube}, 
for $0 \le j \le m$ and $0\le k \le n$,
$$
     \left \langle  {}_1Q_{k,2n}^{(\g)},  {}_1Q_{j,2m}^{(\g)}   \right \rangle_{\CW_{\g}} =  
         \la P_{k,n}^{(\g)}, P_{j,m}^{(\g)} \ra_{W_\g} = \delta_{k,j} \delta_{n,m}, 
$$
and, setting $F = P_{k,n-1}^{(\g), 1,1}P_{k,n}^{(\g)}$, 
$$
 \left  \langle  {}_1Q_{k,2n}^{(\g)},  {}_2Q_{j,2m}^{(\g)} \right \rangle_{\CW_{\g}}
     =   \int_{[-1,1]^2} (x^2-y^2) F(2xy,x^2+y^2 -1) \CW_{\g}(x,y) dxdy.
$$
The right hand side of the above equation changes sign under the change of variables 
$(x,y) \mapsto (y,x)$, which shows that 
$ \left  \langle  {}_1Q_{k,2n}^{(\g)},  {}_2Q_{j,2m}^{(\g)}\right \rangle_{\CW_{\g}} =0$. 
Moreover, since $(x^2-y^2)^2 \CW_{\g}(x,y)dxdy$ is equal to a constant multiple of 
$W_{\g}^{(1,1)}(u,v) dudv$, we see that  
$$
 \left  \langle  {}_2Q_{k,2n}^{(\g)},  {}_2Q_{j,2m}^{(\g)}\right \rangle_{\CW_{\g}}
  = \left  \langle P_{k,n-1}^{(\g),1,1}, P_{j,m-1}^{(\g), 1,1} \right \rangle_{W_\g^{(1,1)}}  
      = \delta_{k,j}\delta_{n,m}.
$$
Furthermore, setting $F = P_{k,n}^{(\g)}P_{k,n}^{(\g), 0,1}$, we obtain 
$$
   \left  \langle  {}_1Q_{k,2n}^{(\g)},  {}_1Q_{j,2m+1}^{(\g)}\right \rangle_{\CW_{\g}} = 
      \int_{[-1,1]^2} (x+y) G(2xy,x^2+y^2 -1) \CW_{\g}(x,y) dxdy,
$$
which is equal to zero since the right hand side changes sign under $(x,y) \mapsto (-x,-y)$. 
The same proof shows also $ \left  \langle  {}_1Q_{k,2n}^{(\g)}, 
{}_2Q_{j,2m+1}^{(\g)}\right \rangle_{\CW_{\g}} =0$.
Together, we have proved the orthogonality of ${}_1Q_{k,2n}^{(\g)}$ and 
${}_2Q_{k,2n}^{(\g)}$. 

Since $(x-y)(x+y) = x^2 - y^2$ changes sign under $(x,y) \mapsto (y,x)$, the same 
consideration shows that $ \left  \langle  {}_1Q_{k,2n+1}^{(\g)}, 
 {}_2Q_{j,2m+1}^{(\g)}\right \rangle_{\CW_{\g}} = 0$.  Finally, we also have 
\begin{align*}
&   \left  \langle  {}_1Q_{k,2n+1}^{(\g)},  {}_1Q_{j,2m+1}^{(\g)}\right \rangle_{\CW_{\g}} =
    \left  \langle P_{k,n}^{(\g), 0,1}, P_{j,m}^{(\g),0,1} \right \rangle_{W_{\g}^{(0,1)} }  
        = \delta_{k,j}\delta_{n,m} \\
&   \left  \langle  {}_2Q_{k,2n+1}^{(\g)},  {}_2Q_{j,2m+1}^{(\g)}\right \rangle_{\CW_{\g}} =
    \left  \langle P_{k,n}^{(\g),1,0}, P_{j,m}^{(\g), 1,0} \right \rangle_{W_{\g}^{(1,0)}} 
        = \delta_{k,j}\delta_{n,m},
\end{align*}
which proves the orthogonality of ${}_1Q_{k,2n+1}^{(\g)}$ and ${}_2Q_{k,2n+1}^{(\g)}$.
\end{proof}

For the weight function $\CW_{\a,\b,\g}$, we denote a basis of orthonormal polynomials 
by $P_{k,n}^{\a,\b,\g}$ in the following theorem. 

\begin{thm} \label{thm:QopJacobi}
For $n = 0,1,\ldots$, an orthonormal basis of $\CV_{2n}(\CW_{\a,\b,\g})$ is given by
\begin{align} \label{Qeven}
\begin{split}
         {}_1Q_{k,2n}^{\a,\b,\g}(x,y):= & P_{k,n}^{\a,\b,\g}(2xy, x^2+y^2 -1), \qquad 0 \le k \le n, \\
         {}_2Q_{k,2n}^{\a,\b,\g}(x,y) := & b_{\a,\b,\g}^{(1,1)}(x^2-y^2)  P_{k,n-1}^{\a+1,\b+1,\g}(2xy, x^2+y^2 -1),  \,\, 0 \le k \le n-1, 
 \end{split}
\end{align}
and an orthonormal basis of $\CV_{2n+1}(\CW_{\a,\b,\g})$ is given by 
\begin{align} \label{Qodd}
 \begin{split}
       &  {}_1Q_{k,2n+1}^{\a,\b,\g}(x,y):=  b_{\a,\b,\g}^{(0,1)}(x+ y) P_{k,n}^{\a,\b+1,\g}(2xy, x^2+y^2 -1), \quad 0 \le k \le n, \\
       &  {}_2Q_{k,2n+1}^{\a,\b,\g}(x,y):=  b_{\a,\b,\g}^{(1,0)}(x-y) P_{k,n}^{\a+1,\b,\g}(2xy, x^2+y^2 -1), \quad  0 \le k \le n,
 \end{split}
\end{align}
where  $b_{\a,\b,\g}^{(i,j)} := \sqrt{a_{\a+i,\b+j,\g} / a_{\a,\b,\g}}$ for $i,j = 0,1$.
\end{thm}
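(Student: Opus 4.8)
The plan is to deduce Theorem~\ref{thm:QopJacobi} from Theorem~\ref{thm:Qop} by specializing $w = w_{\a,\b}$ and identifying the three auxiliary weight functions $W_\g^{(1,1)}$, $W_\g^{(1,0)}$, $W_\g^{(0,1)}$ with members of the same Jacobi family. First I would recall that, by \eqref{eq:wij}, $w^{(i,j)}(x) = (1-x)^i(1+x)^j w_{\a,\b}(x) = (1-x)^{\a+i}(1+x)^{\b+j} = w_{\a+i,\b+j}(x)$ for $i,j=0,1$. Feeding this into the definition \eqref{Wgamma} of $W_\g$ (with $w$ replaced by $w^{(i,j)}$) gives exactly $W_{\a+i,\b+j,\g}$ up to the normalizing constant, and the same substitution applied to \eqref{Wij} shows $W_\g^{(1,1)} = W_{\a+1,\b+1,\g}$, $W_\g^{(1,0)} = W_{\a+1,\b,\g}$, $W_\g^{(0,1)} = W_{\a,\b+1,\g}$, since $(1-u+v)(1-u+v)^\a = (1-u+v)^{\a+1}$ and $(1+u+v)(1+u+v)^\b = (1+u+v)^{\b+1}$ under the substitution $1-u+v = (1-x)(1-y)$, $1+u+v = (1+x)(1+y)$ noted in the text. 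Consequently the orthonormal families $P_{k,n}^{(\g),1,1}$, $P_{k,n}^{(\g),1,0}$, $P_{k,n}^{(\g),0,1}$ introduced before Theorem~\ref{thm:Qop} become precisely $P_{k,n}^{\a+1,\b+1,\g}$, $P_{k,n}^{\a+1,\b,\g}$, $P_{k,n}^{\a,\b+1,\g}$ respectively.

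Next I would check that the constants match: $b_\g^{(i,j)} = \sqrt{a^\g_{w^{(i,j)}}/a^\g_w}$ from Theorem~\ref{thm:Qop}, and since for $w = w_{\a,\b}$ the normalizing constant $a^\g_w$ of $B_\g$ in \eqref{BCweight} is (up to the elementary change of variables \eqref{Para-Square}) the same as the constant $a_{\a,\b,\g}$ appearing in \eqref{Wabc}, we get $a^\g_{w^{(i,j)}} = a_{\a+i,\b+j,\g}$ and hence $b_\g^{(i,j)} = \sqrt{a_{\a+i,\b+j,\g}/a_{\a,\b,\g}} = b_{\a,\b,\g}^{(i,j)}$, exactly the constant in the statement. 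With these identifications, \eqref{Qeven2} becomes \eqref{Qeven} and \eqref{Qodd2} becomes \eqref{Qodd} verbatim, so the orthonormal bases of $\CV_{2n}(\CW_{\a,\b,\g})$ and $\CV_{2n+1}(\CW_{\a,\b,\g})$ asserted here are immediate corollaries of Theorem~\ref{thm:Qop}. I would also note the one point needing a remark: $\CW_{\a,\b,\g}$ in \eqref{CWabc} differs from the $\CW_\g$ built from $w_{\a,\b}$ only by the positive scalar factor $2a_{\a,\b,\g}4^\g$, which does not affect orthogonality or orthonormality — or, more precisely, one keeps the normalization so that $\CW_{\a,\b,\g}$ is a probability weight, consistent with Lemma~\ref{Int-Para-cube}.

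The only genuinely substantive point — and the one I expect to be the mild obstacle — is verifying that applying the quadratic map $(x,y)\mapsto(2xy, x^2+y^2-1)$ to the Koornwinder polynomials $P_{k,n}^{\a+i,\b+j,\g}$ produces polynomials of the correct total degree in $(x,y)$, namely degree $2n$ (for the ${}_1Q_{k,2n}$ and ${}_2Q_{k,2n}$ after multiplying by the degree-2 factor $x^2-y^2$, degree $2(n-1)+2 = 2n$) and degree $2n+1$ (after multiplying by the degree-1 factors $x\pm y$). This is already handled inside the proof of Theorem~\ref{thm:Qop}, where the degree bookkeeping is done for general $w$; since that argument only used the degree/leading-term structure \eqref{eq:jacobi-biangle} of $P_{k,n}^{(\g)}$, which $P_{k,n}^{\a,\b,\g}$ inherits, nothing new is required. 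Thus the proof of Theorem~\ref{thm:QopJacobi} is essentially a one-line invocation of Theorem~\ref{thm:Qop} together with the weight-function identifications above; I would write it in exactly that compressed form.

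\begin{proof}
This is the special case $w = w_{\a,\b}$ of Theorem~\ref{thm:Qop}. Indeed, by \eqref{eq:wij},
$w^{(i,j)} = w_{\a+i,\b+j}$ for $i,j = 0,1$, so that the weight function $W_\g$ built from
$w^{(i,j)}$ is $W_{\a+i,\b+j,\g}$; using $1-u+v = (1-x)(1-y)$ and $1+u+v = (1+x)(1+y)$, the
three weight functions in \eqref{Wij} become, respectively, $W_{\a+1,\b+1,\g}$, $W_{\a+1,\b,\g}$
and $W_{\a,\b+1,\g}$. Hence the orthonormal families $P_{k,n}^{(\g),1,1}$, $P_{k,n}^{(\g),1,0}$,
$P_{k,n}^{(\g),0,1}$ coincide with $P_{k,n}^{\a+1,\b+1,\g}$, $P_{k,n}^{\a+1,\b,\g}$,
$P_{k,n}^{\a,\b+1,\g}$, and the constants $b_\g^{(i,j)} = \sqrt{a^\g_{w^{(i,j)}}/a^\g_w}$ equal
$\sqrt{a_{\a+i,\b+j,\g}/a_{\a,\b,\g}} = b_{\a,\b,\g}^{(i,j)}$. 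Substituting these identities into
\eqref{Qeven2} and \eqref{Qodd2} yields \eqref{Qeven} and \eqref{Qodd}. The replacement of
$\CW_\g$ by the normalized $\CW_{\a,\b,\g}$ in \eqref{CWabc} only multiplies the weight by a
positive constant and so does not affect orthonormality.
\end{proof}
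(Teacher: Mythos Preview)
Your proposal is correct and matches the paper's approach exactly: the paper states Theorem~\ref{thm:QopJacobi} immediately after Theorem~\ref{thm:Qop} with no separate proof, treating it as the obvious specialization $w = w_{\a,\b}$, which is precisely what you have written out. Your identification of $w^{(i,j)} = w_{\a+i,\b+j}$, the resulting equalities $W_\g^{(i,j)} = W_{\a+i,\b+j,\g}$, and the matching of the constants $b_\g^{(i,j)} = b_{\a,\b,\g}^{(i,j)}$ are all correct and constitute the entire content of the deduction.
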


\subsection{Special cases and properties} 
In the case of $\g = \pm \frac12$, we can derive an explicit formula for the basis from
\eqref{OP-1/2} and \eqref{OP+1/2}, which takes a particularly simple form if we change variables
to 
\begin{equation} \label{xy-cos}
   x = \cos \theta, \quad y = \cos \phi, \quad 0 \le \t, \phi \le \pi. 
\end{equation} 

\begin{cor}
For $\g = \pm \frac12$, the orthonormal basis defined in \eqref{Qeven2} and \eqref{Qodd2} of 
$\CV_n(\CW_{\pm \frac12})$ satisfies, under \eqref{xy-cos}, explicit formulas upon using 
the relations 
\begin{align} \label{P-1/2-angle}
 & P_{k,n}^{(- \frac12)} (2 x y, x^2+y^2 -1)  \\ 
 &  \quad =   \frac{1}{\sqrt{2}} \left[p_n (\cos (\t - \phi)) p_k(\cos (\t+\phi)) +  p_k (\cos (\t - \phi)) p_n(\cos (\t+\phi)) \right]\notag
\end{align}
for $0 \le k \le n$, where the $k=n$ term $P_{n,n}^{(- \frac12)}$ is multiplied by the constant $\sqrt{2}$; 
furthermore,  for $0 \le k \le n$, 
\begin{align} \label{P+1/2-angle}
& P_{k,n}^{(\frac12)} (2 x y, x^2+y^2 -1)  \\ 
 &    =  \sqrt{\frac{a_w^{- 1/2}}{2 a_w^{1/2}}}\,  \frac{p_n (\cos (\t - \phi)) p_k (\cos (\t+\phi))  
      -  p_k (\cos (\t - \phi)) p_n (\cos (\t+\phi))}{2 \sin \t \sin \phi}.  \notag
\end{align}
\end{cor}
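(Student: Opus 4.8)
The plan is to verify the two identities \eqref{P-1/2-angle} and \eqref{P+1/2-angle} directly by substituting the trigonometric parametrization \eqref{xy-cos} into the explicit bases \eqref{OP-1/2} and \eqref{OP+1/2} that define $P_{k,n}^{(-\frac12)}$ and $P_{k,n}^{(\frac12)}$. The essential point is to identify the pair of variables $(x,y)$ appearing in those formulas when the argument is the quadratic transform $(u,v) = (2xy,\,x^2+y^2-1)$. In \eqref{OP-1/2} and \eqref{OP+1/2} the variables $x,y$ are related to $(u,v)$ by $u = x+y$, $v = xy$; that is, $x,y$ are the two roots of $t^2 - u t + v = 0$. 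So for $(u,v) = (2xy,\, x^2+y^2-1)$ I must solve $t^2 - 2xy\,t + (x^2+y^2-1) = 0$. Writing $x = \cos\t$, $y = \cos\phi$ and using \eqref{theta-phi} from the proof of Lemma~\ref{Int-Para-cube}, namely $2xy = \cos(\t-\phi)+\cos(\t+\phi)$ and $x^2+y^2-1 = \cos(\t-\phi)\cos(\t+\phi)$, one sees at once that the quadratic factors as $\bigl(t-\cos(\t-\phi)\bigr)\bigl(t-\cos(\t+\phi)\bigr)$. Hence the two roots are precisely $\cos(\t-\phi)$ and $\cos(\t+\phi)$, and these are the arguments at which the one-variable orthogonal polynomials $p_n$, $p_k$ must be evaluated.

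With that identification in hand, \eqref{P-1/2-angle} is immediate: substituting $x \mapsto \cos(\t-\phi)$, $y\mapsto\cos(\t+\phi)$ into the $k<n$ case of \eqref{OP-1/2} gives exactly the symmetric combination $\tfrac{1}{\sqrt2}\bigl[p_n(\cos(\t-\phi))p_k(\cos(\t+\phi)) + p_k(\cos(\t-\phi))p_n(\cos(\t+\phi))\bigr]$, while the $k=n$ case of \eqref{OP-1/2} carries the extra factor $\sqrt2$, matching the stated convention. For \eqref{P+1/2-angle} I substitute into \eqref{OP+1/2}; the only new ingredient is the denominator $x - y$, which under the substitution becomes $\cos(\t-\phi) - \cos(\t+\phi)$. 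Applying the product-to-sum identity $\cos(\t-\phi) - \cos(\t+\phi) = 2\sin\t\sin\phi$ converts the denominator to $2\sin\t\sin\phi$, producing exactly the right-hand side of \eqref{P+1/2-angle} with the same normalizing constant $\sqrt{a_w^{-1/2}/(2a_w^{1/2})}$ as in \eqref{OP+1/2}.

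I should also note that the left-hand sides in the corollary are well defined: by Lemma~\ref{Int-Para-cube} the map $(x,y)\mapsto(2xy,\,x^2+y^2-1)$ sends $[-1,1]^2$ into $\Omega$, so $P_{k,n}^{(\pm\frac12)}$ evaluated at $(2xy,x^2+y^2-1)$ makes sense; moreover the apparent singularities of \eqref{OP+1/2} and of the right-hand side of \eqref{P+1/2-angle} along the diagonal $x=y$ (i.e.\ $\t = \phi$, where $\sin\t\sin\phi$ need not vanish but $\cos(\t-\phi)-\cos(\t+\phi) = 2\sin\t\sin\phi = 0$ precisely when $\t = 0$ or $\phi = 0$) are removable, since the numerator is an antisymmetric polynomial in the two roots and is therefore divisible by their difference, exactly as in the Christoffel--Darboux form \eqref{CDformula}.

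There is essentially no obstacle of substance here: once the factorization of the quadratic $t^2 - 2xy\,t + (x^2+y^2-1)$ via \eqref{theta-phi} is recognized, both formulas follow by direct substitution and one elementary trigonometric identity. The only point requiring a small amount of care is bookkeeping the normalization constants and the exceptional $k=n$ term in the $\g=-\frac12$ case, so that the constants quoted in \eqref{P-1/2-angle}--\eqref{P+1/2-angle} agree with those in \eqref{OP-1/2}--\eqref{OP+1/2}; this is routine.
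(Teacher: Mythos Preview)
Your proposal is correct and follows exactly the approach of the paper, which simply states that the corollary ``follows immediately from \eqref{theta-phi}, \eqref{OP-1/2} and \eqref{OP+1/2}''; you have merely spelled out the substitution in detail, including the key observation that the roots of $t^2 - 2xy\,t + (x^2+y^2-1)$ are $\cos(\t-\phi)$ and $\cos(\t+\phi)$ and the identity $\cos(\t-\phi)-\cos(\t+\phi)=2\sin\t\sin\phi$ for the denominator. One small remark: your parenthetical about the removable singularity is slightly muddled (the vanishing of $2\sin\t\sin\phi$ corresponds to the boundary $x=\pm1$ or $y=\pm1$, i.e.\ to the parabolic boundary $u^2=4v$ of $\Omega$, not to the diagonal $x=y$), but this does not affect the argument.
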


\begin{proof}
This follows immediately from \eqref{theta-phi}, \eqref{OP-1/2} and \eqref{OP+1/2}. 
\end{proof}

In particular, the orthonormal basis for the weight function 
$$
    \CW_{\a,\b, \pm \frac12} (x_1, x_2)= c |x_1-x_2|^{2 \a +1}  |x_1+x_2|^{2 \b +1} 
         (1-x_1^2)^{\pm \frac12} (1-x_2^2)^{\pm \frac12} 
$$
on $[-1,1]^2$, where $c = 2 a_{a,\b,\pm \frac12} 4^{\pm \frac12}$, can be given in terms of the Jacobi polynomials. 

\begin{prop}
Let $\a, \b > -1$. An orthonormal basis of $\CV_{2n}(\CW_{\a,\b,-\frac12})$ is given 
by, for $0 \le k \le n$ and $0 \le k \le n-1$, respectively,  
\begin{align*} 
    & {}_1Q_{k,2n}^{(\a,\b,-\frac12)}(\cos\t,\cos \phi) = \frac{1}{\sqrt{2}}\left[ p_n^{(\a,\b)} (\cos (\t - \phi)) p_k^{(\a,\b)}(\cos (\t+\phi))\right. \\
    &    \qquad\qquad  \qquad\qquad \qquad\qquad  \qquad 
          \left.   +  p_k^{(\a,\b)} (\cos (\t - \phi)) p_n^{(\a,\b)}(\cos (\t+\phi))\right], \\
    & {}_2Q_{k,2n}^{(\a,\b,-\frac12)}(\cos\t,\cos \phi)  = 
      \frac{  b_{\a,\b,-\frac12}^{(1,1)}} {\sqrt{2}} (x^2-y^2) \left[ p_{n-1}^{(\a+1,\b+1)} (\cos (\t - \phi)) p_k^{(\a+1,\b+1)} (\cos (\t+\phi)) \right. \\ 
    &    \qquad\qquad  \qquad\qquad \qquad\qquad  \qquad 
        \left. +  p_k^{(\a+1,\b+1)} (\cos (\t - \phi)) p_{n-1}^{(\a+1,\b+1)} (\cos (\t+\phi)) \right],
\end{align*}
and an orthonormal basis of $\CV_{2n+1}(\CW_{\a,\b,-\frac12})$ is given by, for $0 \le k \le n$, 
 \begin{align*} 
     & {}_1Q_{k,2n+1}^{(\a,\b,-\frac12)}(\cos\t,\cos \phi)  = \frac{b_{\a,\b,-\frac12}^{(0,1)}} {\sqrt{2}}(x+y) 
         \left[ p_n^{(\a,\b+1)} (\cos (\t - \phi)) p_k^{(\a,\b+1)}(\cos (\t+\phi)) \right. \\ 
     &    \qquad\qquad   \qquad\qquad \qquad\qquad\qquad  \left. 
           +  p_k^{(\a,\b+1)} (\cos (\t - \phi)) p_n^{(\a,\b+1)}(\cos (\t+\phi)) \right], \\
     & {}_2Q_{k,2n+1}^{(\a,\b,-\frac12)}(\cos\t,\cos \phi) = \frac{b_{\a,\b,-\frac12}^{(1,0)}} {\sqrt{2}} (x-y) \left[
          p_n^{(\a+1,\b)} (\cos (\t - \phi)) p_k^{(\a+1,\b)}(\cos (\t+\phi))  \right. \\ 
     &    \qquad\qquad  \qquad\qquad   \qquad\qquad \qquad
          \left.    +  p_k^{(\a+1,\b)} (\cos (\t - \phi)) p_n^{(\a+1,\b)}(\cos (\t+\phi)) \right],
\end{align*}
where whenever $k = n$ the polynomial needs to be multiplied by an additional $\sqrt{2}$. 
\end{prop}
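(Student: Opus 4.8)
The plan is to read these formulas off from Theorem~\ref{thm:QopJacobi} by specializing to $\g = -\tfrac12$ and inserting the explicit description \eqref{OP-1/2} of the Koornwinder orthonormal polynomials in that case. First I would record the elementary factorizations $1-u+v = (1-x)(1-y)$, $1+u+v = (1+x)(1+y)$ and $u^2-4v = (x-y)^2$, all valid under $u = x+y$, $v = xy$. The first two show that, for $w = w_{\a,\b}$, the shifted one–variable weights of \eqref{eq:wij} are $w^{(1,1)} = w_{\a+1,\b+1}$, $w^{(1,0)} = w_{\a+1,\b}$ and $w^{(0,1)} = w_{\a,\b+1}$, and hence that the three auxiliary weights in \eqref{Wij} coincide with $W_{\a+1,\b+1,-\frac12}$, $W_{\a+1,\b,-\frac12}$ and $W_{\a,\b+1,-\frac12}$ respectively. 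All the parameter shifts stay inside the admissible range $\a,\b>-1$, so these are again genuine Koornwinder weights, and their orthonormal polynomials are once more given by \eqref{OP-1/2} with the Jacobi weights shifted accordingly.

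Next I would substitute. By \eqref{OP-1/2} with $w = w_{\a,\b}$, the polynomial $P_{k,n}^{\a,\b,-\frac12}$ occurring in \eqref{Qeven} is, under $u = s+t$, $v = st$, the symmetrized product $\tfrac{1}{\sqrt2}\bigl[p_n^{(\a,\b)}(s)p_k^{(\a,\b)}(t)+p_k^{(\a,\b)}(s)p_n^{(\a,\b)}(t)\bigr]$ (with the diagonal $k=n$ term renormalized as in \eqref{OP-1/2}), and likewise for $P_{k,n-1}^{\a+1,\b+1,-\frac12}$, $P_{k,n}^{\a,\b+1,-\frac12}$, $P_{k,n}^{\a+1,\b,-\frac12}$ with the shifted Jacobi weights. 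By \eqref{theta-phi}, under the substitution \eqref{xy-cos} the argument $(u,v)=(2xy,\,x^2+y^2-1)$ corresponds, via $u=s+t$, $v=st$, to the pair $\{s,t\}=\{\cos(\t-\phi),\cos(\t+\phi)\}$; feeding this in turns the above into \eqref{P-1/2-angle} and its three parameter–shifted analogues, which is exactly what the corollary preceding the Proposition records. Inserting these into the four families \eqref{Qeven}--\eqref{Qodd} of Theorem~\ref{thm:QopJacobi}, and carrying the prefactors $x^2-y^2$, $x+y$, $x-y$ along unchanged, produces the four displayed formulas.

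I expect no real obstacle: orthonormality is already guaranteed by Theorem~\ref{thm:QopJacobi}, so what remains is pure bookkeeping. The only points requiring mild care are cosmetic. First, the diagonal term $k=n$ must be renormalized by the factor $\sqrt2$ exactly as dictated by \eqref{OP-1/2}, since writing the $k=n$ case in the uniform symmetric bracket doubles it. Second, in \eqref{OP-1/2} one has to choose which of $\cos(\t-\phi)$, $\cos(\t+\phi)$ plays the role of the first argument, but this is harmless because the bracketed expressions are symmetric in their two arguments. Assembling these observations completes the proof.
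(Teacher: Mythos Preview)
Your proposal is correct and follows essentially the same approach as the paper. The paper does not even give a separate proof for this Proposition: it is presented as the immediate specialization of Theorem~\ref{thm:QopJacobi} (and the preceding Corollary) to $w = w_{\a,\b}$ and $\g = -\tfrac12$, using \eqref{OP-1/2} and \eqref{theta-phi}, which is precisely what you spell out in detail.
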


Similarly, an orthonormal basis for $\CV_n(\CW_{\a,\b,\frac12})$ can be given explicitly in terms of the
Jacobi polynomials upon using \eqref{P+1/2-angle}. 

By Theorem \ref{thm:Qop}, the orthogonal polynomials for $\CW_{\a,\b,\g}$ are expressed in terms of
orthogonal polynomials for $W_{\a,\b,\g}$, which in turn are expressed in terms of the symmetric
orthogonal polynomials with respect to the weight function 
$$
   B_{\a,\b,\g}(x,y) = (1-x)^\a(1+x)^\b(1-y)^\a(1+y)^\b |x-y|^{2\g+1}
$$
on $[-1,1]^2$; see \eqref{BCweight} with $w =w_{\a,\b}$ and the remark at the end of Subsection 3.1. 
Both weight functions $\CW_{\a,\b,\g}$ and $B_{\a,\b,\g}$ are defined on $[-1,1]^2$, and they
satisfy
\begin{equation}\label{B-CW}
                   \CW_{\a, -\frac12,\g}(x,  y) = 4^\g B_{\g, \g, \a}(x,y). 
\end{equation}
Consequently, there is some kind of automorphism among these orthogonal polynomials. By Theorem
\ref{thm:QopJacobi}, symmetric orthogonal polynomials with respect to $\CW_{\g, - \frac12,\a}$ 
are given by, for $0 \le k \le n$, 
$$
     P_{k,n}^{\g, -\frac12,\a}(2xy, x^2+ y^2 -1) \quad \hbox{and}\quad 
           (x+ y) P_{k,n}^{\g, \frac12,\a}(2xy, x^2+ y^2 -1) 
$$
of degree $2n$ and $2n+1$, respectively. These are, by \eqref{B-CW}, symmetric orthogonal polynomials
for $B_{\a,\a,\g}$, from which we can derive orthogonal polynomials for $W_{\a,\a,\g}$ by a change of
variables $u = x + y$ and $v = x y$ as shown in the end of Subsection 3.1. Since
$x^2+ y^2 = u^2 - 2 v$, we see that 
$$
     P_{k,n}^{\g, -\frac12,\a}(2 v, u^2 - 2 v -1) \quad \hbox{and}\quad 
                u_1 P_{k,n}^{\g, \frac12,\a}(2 v, u^2 - 2 v -1) 
$$
are orthogonal polynomials with respect to $W_{\a,\a,\g}$ of degree $k+n$. Comparing the leading 
coefficients by \eqref{eq:jacobi-biangle}, we conclude that 
\begin{align*}
  P_{k,n}^{\g, -\frac12,\a}(2 v, u^2 - 2 v -1)&  = 2^{n-k} P_{n-k,n+k}^{\a, \a,\g} (u,v), \\
  u P_{k,n}^{\g, \frac12,\a}(2 v, u^2 - 2 v -1) & = 2^{n-k} P_{n-k,n+k}^{\a, \a,\g} (u,v).   
\end{align*}
These are, however, precisely \eqref{quadraticPkn}. These relations translate to orthogonal 
polynomials with respect to $\CW_{\a,\b,\g}$ on $[-1,1]^2$ as follows. Let 
${}_iQ_{k,n}^{\a,\b,\g}$ denote the orthogonal polynomials given in \eqref{Qeven2} and \eqref{Qodd2} 
but with $P_{k,n}^{\a,\b,\g}$ as the monic orthogonal polynomial as in \eqref{eq:jacobi-biangle}. 

\begin{prop}
We have the following quadratic transforms, for $0 \le k \le \lfloor \frac{n}{2} \rfloor$,  
\begin{align*}
  & {}_1Q_{k,n}^{\g, -1/2,\a} (\cos \t, \cos \phi) 
         = 2^{\lfloor \frac{n}{2} \rfloor -k} {}_1Q_{\lfloor \frac{n}{2} \rfloor-k,2 n- 2 \lfloor \frac{n}{2} \rfloor+2k}^{\a,\a,\g} 
              \left(\cos \tfrac{\t - \phi}{2}, \cos \tfrac{\t + \phi}{2}\right),  \\
  &  \sin \t \sin \phi \, {}_2Q_{k,n}^{\g, -1/2,\a} (\cos \t, \cos \phi)  \\
      & \quad = 2^{\lfloor \frac{n+1}{2} \rfloor -k} \sin \tfrac{\t - \phi}{2} \sin \tfrac{\t + \phi}{2} \times
           {}_2Q_{\lfloor \frac{n-1}{2} \rfloor-k, 2 n- 2 \lfloor \frac{n+1}{2} \rfloor+2k}^{\a,\a,\g} 
              \left(\cos \tfrac{\t - \phi}{2}, \cos \tfrac{\t + \phi}{2}\right). 
\end{align*}
\end{prop}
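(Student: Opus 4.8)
The plan is to read off both identities as a corollary of \thmref{thm:QopJacobi} together with the quadratic transformation \eqref{quadraticPkn}, by inserting the latter into the former and then passing to half-angle variables. Concretely: first I would apply the monic form of \thmref{thm:QopJacobi} to $\CW_{\g,-\frac12,\a}$, that is, with the Koornwinder parameter triple $(\a,\b,\g)$ specialized to $(\g,-\frac12,\a)$. This exhibits ${}_1Q_{k,n}^{\g,-\frac12,\a}$ as $P_{k,n/2}^{\g,-\frac12,\a}(2xy,x^2+y^2-1)$ for $n$ even and as $(x+y)P_{k,(n-1)/2}^{\g,\frac12,\a}(2xy,x^2+y^2-1)$ for $n$ odd, and it exhibits ${}_2Q_{k,n}^{\g,-\frac12,\a}$ as $(x^2-y^2)P_{k,n/2-1}^{\g+1,\frac12,\a}(2xy,x^2+y^2-1)$ for $n$ even and as $(x-y)P_{k,(n-1)/2}^{\g+1,-\frac12,\a}(2xy,x^2+y^2-1)$ for $n$ odd.

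Next I would note that $(2xy,\,x^2+y^2-1)=(2v,\,u^2-2v-1)$ for $u=x+y$, $v=xy$ (simply because $u^2-2v=x^2+y^2$). Feeding this into \eqref{quadraticPkn} converts $P_{k,l}^{\g,-\frac12,\a}(2xy,x^2+y^2-1)$ into $2^{\,l-k}P_{l-k,\,l+k}^{\a,\a,\g}(x+y,xy)$ and $P_{k,l}^{\g,\frac12,\a}(2xy,x^2+y^2-1)$ into $2^{\,l-k}(x+y)^{-1}P_{l-k,\,l+k+1}^{\a,\a,\g}(x+y,xy)$, with the obvious $\g\mapsto\g+1$ analogues for the polynomials occurring in the ${}_2Q$ formulas. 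In the odd-degree ${}_1Q$ case the prefactor $x+y$ cancels the factor $(x+y)^{-1}$; in the ${}_2Q$ cases $x^2-y^2=(x-y)(x+y)$ partially cancels that factor, leaving a single $x-y$.

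Then I would pass to angles: put $x=\cos\t$, $y=\cos\phi$ and set $x'=\cos\tfrac{\t-\phi}{2}$, $y'=\cos\tfrac{\t+\phi}{2}$. Using \eqref{theta-phi} with $(\t,\phi)$ replaced by $\bigl(\tfrac{\t-\phi}{2},\tfrac{\t+\phi}{2}\bigr)$ gives the crucial identity $(x+y,\,xy)=(2x'y',\,x'^2+y'^2-1)$; hence, by \thmref{thm:QopJacobi} applied to $\CW_{\a,\a,\g}$ in even degree, $P_{j,l}^{\a,\a,\g}(x+y,xy)=P_{j,l}^{\a,\a,\g}(2x'y',x'^2+y'^2-1)={}_1Q_{j,2l}^{\a,\a,\g}(x',y')$, and likewise with $\g\mapsto\g+1$. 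The surviving prefactor is rewritten via $x-y=\cos\t-\cos\phi=-2\sin\tfrac{\t-\phi}{2}\sin\tfrac{\t+\phi}{2}$, which is exactly the $\sin\tfrac{\t-\phi}{2}\sin\tfrac{\t+\phi}{2}$ appearing on the right of the ${}_2Q$ identity, while $x'^2-y'^2=\sin\t\sin\phi$ accounts for the $\sin\t\sin\phi$ on the left. The power of $2$ is the accumulated $2^{\,l-k}$ from \eqref{quadraticPkn}; since every $P$ here is monic, comparing leading coefficients in the $(u,v)$ variables via \eqref{eq:jacobi-biangle} (exactly as in the derivation of \eqref{quadraticPkn} recalled just above the proposition) pins down the constant, and transcribing $(k,n)$ through the split $n=2m$ versus $n=2m+1$ yields the stated index maps $k\mapsto\lfloor\tfrac n2\rfloor-k$ (resp.\ $\lfloor\tfrac{n-1}{2}\rfloor-k$) and degrees $2n-2\lfloor\tfrac n2\rfloor+2k$ (resp.\ $2n-2\lfloor\tfrac{n+1}{2}\rfloor+2k$).

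The one place where care is genuinely required is the ${}_2Q$ line. There one must carry the shift $\g\mapsto\g+1$ correctly through \eqref{quadraticPkn}, and then identify the polynomial so obtained — which comes out naturally with parameters of the shape $(\a,\a,\g+1)$ and with a leftover linear factor — with the object named on the right-hand side, getting the prefactor and the sign from $\cos\t-\cos\phi=-2\sin\tfrac{\t-\phi}{2}\sin\tfrac{\t+\phi}{2}$ exactly right; the even/odd floor-function accounting for the indices and degrees must also be done uniformly. Everything else — the two substitution identities and the leading-coefficient normalization — is routine.
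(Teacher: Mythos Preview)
Your proposal takes essentially the same route as the paper's own proof: apply the definitions in \thmref{thm:QopJacobi} to both sides, invoke the quadratic transform \eqref{quadraticPkn} (which the paper rewrites as \eqref{quadraticP}), and then pass to half-angle variables via the identities $x+y=2\cos\tfrac{\t-\phi}{2}\cos\tfrac{\t+\phi}{2}$, $xy=\cos^2\tfrac{\t-\phi}{2}+\cos^2\tfrac{\t+\phi}{2}-1$, together with $\cos\t-\cos\phi$ and $\cos^2\tfrac{\t-\phi}{2}-\cos^2\tfrac{\t+\phi}{2}=\sin\t\sin\phi$ for the ${}_2Q$ line. Your write-up is simply more explicit about the index bookkeeping and the parameter shift in the ${}_2Q$ case than the paper's very terse argument.
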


\begin{proof}
From the quadratic transform formulas satisfied by $P_{k,n}^{\a,\b,\g}(u_1,u_2)$, we have 
\begin{align}  \label{quadraticP}
\begin{split}
   P_{n-k,n+k}^{\a,\a,\g}(x+y,xy) & = 2^{-n+k} P_{k,n}^{\g, - \frac12, \a}(2xy, x^2 +y^2 -1), \\
  (x+y)^{-1} P_{n-k,n+k+1}^{\a,\a,\g}(x+y,xy) & = 2^{-n+k} P_{k,n}^{\g, \frac12, \a}(2 xy, x^2 +y^2 -1). 
\end{split}
\end{align}
If $x = \cos \theta$ and $y = \cos \phi$, then it is easy to see that 
$$
   x+ y = 2 \cos \tfrac{\t - \phi}{2}\cos \tfrac{\t + \phi}{2}, \quad 
        x y = \cos^2 \tfrac{\t - \phi}{2} + \cos^2 \tfrac{\t + \phi}{2} -1
$$
Consequently,  the equations for ${}_1Q_{k,2m}^{\g, -1/2,\a}$ and ${}_1Q_{k,2m+1}^{\g, -1/2,\a}$ 
follow immediately from \eqref{quadraticP}, \eqref{Qeven} and \eqref{Qodd}. For 
${}_2Q_{k,2m}^{\g, -1/2,\a}$ and ${}_2Q_{k,2m+1}^{\g, -1/2,\a}$ we use in addition the 
trigonometric identities $\cos \t  - \cos \phi = 2 \sin \tfrac{\t - \phi}{2} \sin \tfrac{\t + \phi}{2}$ 
and $\cos^2 \tfrac{\t - \phi}{2} - \cos^2 \tfrac{\t + \phi}{2} = \sin \t \sin \phi$. 
\end{proof}

In the case of $\a = \b = -1/2$, the weight function becomes 
$$
   \CW_{-\frac12, - \frac12, \g}(x,y) = (1-x^2)^\g (1-y^2)^\g, \quad (x,y) \in [-1,1]^2,
$$
which is the product Gegenbauer weight function. An orthonormal basis of $\CV_n^d$ for 
this weight function is usually given by the product Jacobi polynomials
$$
      P_{k,n}(x,y) = p_k^{(\g,\g)}(x) p_{n-k}^{(\g,\g)}(y), \quad 0 \le k \le n.    
$$
In this case, another basis for $\CV_n^d$ can be stated as follows: 

\begin{prop}
For $\a = \b = -1/2$, the orthonormal basis for $\CV_{2n}^d(\CW_{-\frac12, - \frac12, \g})$ 
is given by
\begin{align} \label{productEven}
\begin{split}
  & \frac{1}{\sqrt{2}} \left( p_{n+k}^{(\g,\g)}(x) p_{n-k}^{(\g,\g)}(y)+
       p_{n+k}^{(\g,\g)}(y)  p_{n-k}^{(\g,\g)}(x)\right), \quad 0 \le k \le n, \\ 
  & \frac{1}{\sqrt{2}} \left( p_{n+k+1}^{(\g,\g)}(x) p_{n-k-1}^{(\g,\g)}(y) -
       p_{n+k+1}^{(\g,\g)}(y)  p_{n-k-1}^{(\g,\g)}(x)\right), \quad 0 \le k \le n-1, 
\end{split}
\end{align}
and the orthogonal basis for $\CV_{2n+1}^d(\CW_{-\frac12, - \frac12, \g})$ is given by 
\begin{align} \label{productOdd}
\begin{split}
  &   \frac{1}{\sqrt{2}} \left( p_{n+k+1}^{(\g,\g)}(x) p_{n-k}^{(\g,\g)}(y)+
       p_{n+k+1}^{(\g,\g)}(y)  p_{n-k}^{(\g,\g)}(x)\right), \quad 0 \le k \le n, \\ 
   &   \frac{1}{\sqrt{2}} \left( p_{n+k+1}^{(\g,\g)}(x) p_{n-k}^{(\g,\g)}(y) -
       p_{n+k+1}^{(\g,\g)}(y)  p_{n-k}^{(\g,\g)}(x)\right), \quad 0 \le k \le n. 
\end{split}
\end{align}
\end{prop}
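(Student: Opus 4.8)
The plan is to read off this Proposition as the special case $\a = \b = -\tfrac12$ of Theorem \ref{thm:QopJacobi}, combined with the explicit quadratic transform formulas \eqref{P1/21/2g}. Setting $\a = \b = -\tfrac12$ in \eqref{Qeven} and \eqref{Qodd}, the four families of basis elements become, up to the positive constants $b^{(i,j)}$ of Theorem \ref{thm:QopJacobi}, the polynomials $P_{k,n}^{-\frac12,-\frac12,\g}(2xy,x^2+y^2-1)$, $(x^2-y^2)\,P_{k,n-1}^{\frac12,\frac12,\g}(2xy,x^2+y^2-1)$, $(x+y)\,P_{k,n}^{-\frac12,\frac12,\g}(2xy,x^2+y^2-1)$ and $(x-y)\,P_{k,n}^{\frac12,-\frac12,\g}(2xy,x^2+y^2-1)$. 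Each Koornwinder polynomial $P_{k,m}^{\pm\frac12,\pm\frac12,\g}$ evaluated at $(2xy,x^2+y^2-1)$ is exactly the object computed in \eqref{P1/21/2g}, with the variables there identified with our $x,y$.

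First I would substitute \eqref{P1/21/2g} into each of the four families. The formula for $P_{k,n}^{-\frac12,-\frac12,\g}$ has no denominator and produces directly the first line of \eqref{productEven}. The other three formulas of \eqref{P1/21/2g} carry the denominators $x^2-y^2$, $x+y$ and $x-y$, which cancel exactly against the prefactors $(x^2-y^2)$, $(x+y)$ and $(x-y)$ supplied by Theorem \ref{thm:QopJacobi}; after cancellation, together with the index shift $n \mapsto n-1$ in the $P^{\frac12,\frac12,\g}$ case (so that $n-1+k+2 = n+k+1$ and $n-1-k = n-k-1$), one reads off the second line of \eqref{productEven} and the two lines of \eqref{productOdd}. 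I would record that these quotients are genuine polynomials: the bracketed antisymmetric combinations in \eqref{P1/21/2g} vanish on $x = y$, on $x = -y$, resp.\ on $x^2 = y^2$, so the products appearing in \eqref{Qeven}--\eqref{Qodd} are polynomials, of total degree $2n$ resp.\ $2n+1$. The remaining point is the scalar: since the $p_a^{(\g,\g)}$ are orthonormal and, for $a \ne b$, the two products $p_a^{(\g,\g)}(x)p_b^{(\g,\g)}(y)$ and $p_b^{(\g,\g)}(x)p_a^{(\g,\g)}(y)$ form an orthonormal pair in $L^2$ of the product weight, the normalized (anti)symmetric combinations are precisely those carrying the factor $\tfrac{1}{\sqrt2}$; in the degenerate case $k = n$ of the even family the two products coincide, which accounts for the extra $\sqrt2$ noted in the statement. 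Orthogonality of all listed polynomials is then inherited from the orthonormality asserted in Theorem \ref{thm:QopJacobi}, and the counts $2n+1 = \dim \CV_{2n}(\CW_{-\frac12,-\frac12,\g})$ and $2n+2 = \dim\CV_{2n+1}(\CW_{-\frac12,-\frac12,\g})$ confirm these are bases.

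The argument is essentially bookkeeping; the one place demanding care is matching the Koornwinder index $(k,n)$ of \eqref{Qeven}--\eqref{Qodd} with the Jacobi indices in \eqref{P1/21/2g} and checking that the denominators of \eqref{P1/21/2g} cancel cleanly with nothing left over, so that no spurious singular factor survives. As an alternative that bypasses \eqref{P1/21/2g} altogether, one may verify the Proposition by a direct computation: the listed polynomials are (anti)symmetric combinations of products of orthonormal Jacobi polynomials; mutual orthonormality within each family and between the two families follows from the product structure of $(1-x^2)^\g(1-y^2)^\g$ together with parity under $(x,y) \mapsto (y,x)$; and orthogonality to $\Pi_{m-1}^2$ follows because $\int_{-1}^1 p_a^{(\g,\g)}(x)\, x^i (1-x^2)^\g\,dx = 0$ unless $i \ge a$, so a monomial $x^i y^j$ can pair nontrivially with $p_a^{(\g,\g)}(x)p_b^{(\g,\g)}(y)$ only when $i + j \ge a + b = m$.
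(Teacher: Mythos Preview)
Your proposal is correct and follows essentially the same route as the paper's own proof, which simply states that the orthogonality follows from \eqref{P1/21/2g} and Theorem \ref{thm:QopJacobi} and that the result can also be verified directly; you have fleshed out precisely those two steps. The only quibble is that the ``extra $\sqrt{2}$'' for the degenerate case $k=0$ in the first line of \eqref{productEven} is not actually flagged in the statement of this particular Proposition (unlike the earlier ones), but your observation about it is correct and your handling of the normalization is sound.
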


\begin{proof}
The orthogonality of these polynomials follows from \eqref{P1/21/2g} and 
Theorem \ref{thm:QopJacobi}. They can also be verified directly. 
\end{proof}

Finally, let us mention that under the change of variables $u = 2 xy$ and 
$v = x^2+y^2 -1$, the operator $E_-^{\a,\b}$ in \eqref{Eab} becomes
\begin{align}
   \CE_-^{\a,\b}: = \frac12 \frac{\partial^2}{\partial x \partial y} + &
        \frac{1}{2(x^2-y^2)} \left[( (\a+\b+1)x + (\a-\b)y ) \frac{\partial}{\partial x} \right. \\
          &  -  \left. ( ( \a - \b)x + (\a+\b+a)y ) \frac{\partial}{\partial y} \right], \notag
\end{align}
which has a simple form for the second order derivatives, so that, by \eqref{Qeven},  
\begin{align*}
   \CE_-^{\a,\b} {}_1Q_{k,2n}^{\a,\b,\g} (x,y) & = -(n-k)(n+k+\a+\b+1) {}_1Q_{k,2 n-2}^{\a,\b,\g+1}(x,y).
\end{align*}
The operator $\CE_-^{\a,\b}$ does not, however, act on ${}_2Q_{k,2n}^{\a,\b,\g}$ 
in the same manner. As the operator $E_+^{\a,\b}$ has the same second order derivatives
as that of $E_-^{\a,\b}$, we can also have an $\CE_+^{\a,\b}$ that has simple second order
derivatives and act on ${}_1Q_{k,2n}^{\a,\b,\g}$ according to \eqref{Eab}. 
  
\subsection{Reproducing kernel}  
We express the reproducing kernel for $K_n(\CW_{\g}; \cdot, \cdot)$, which 
we denote by $\CK_n^{(\g)}(\cdot, \cdot)$ below, in terms of the reproducing kernel
$K_n^{(\g)}(\cdot,\cdot)$ defined in Subsection 3.2. For $W_\g^{(i,j)}$ defined in \eqref{Wij}
with $i,j = 0,1$, we denote by $K_n^{(\g),i,j}(\cdot,\cdot)$ the reproducing kernel 
$K_n(W_\g^{(i,j)}; \cdot,\cdot)$.

\begin{thm}
For $x = (x_1,x_2)$, $y = (y_1,y_2)$, define 
\begin{equation} \label{eq:s-t}
     s = (s_1,s_2) = (2 x_1x_2, x_1^2+x_2^2 -1), \quad t  = (t_1,t_2)= (2 y_1 y_2, y_1^2+ y_2^2-1). 
\end{equation}
Then the reproducing kernel $\CK_n^{(\g)}(\cdot, \cdot)$ for $\CW_{\g}$ 
is given by 
\begin{align} \label{eq:repCW}
    \CK_{n}^{(\g)}(x,y)    =  \,K_{\lfloor \frac{n}{2} \rfloor}^{(\g)}(s,t)  & +  
            d_{w^{(1,1)}}^\g (x_1^2-x_2^2)(y_1^2-y_2^2) K_{\lfloor \frac{n}{2} \rfloor -1}^{(\g), 1,1}(s,t) \\
            & +  d_{w^{(1,0)}}^\g (x_1+x_2)(y_1+y_2) K_{\lfloor \frac{n-1}{2} \rfloor}^{(\g),0,1}(s,t) \notag \\
              &  +   d_{w^{(0,1)}}^\g(x_1-x_2)(y_1-y_2) K_{\lfloor \frac{n-1}{2} \rfloor }^{(\g), 1,0}(s,t), \notag
\end{align}
where $d_{w^{(i,j)}}^\g =  a_{w^{(i,j)}}^\g /a_w^\g$ for $i,j =0,1$. 
\end{thm}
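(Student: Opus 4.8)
The plan is to compute $\CK_n^{(\g)}$ directly from the bilinear representation \eqref{reprodK}, substituting the explicit orthonormal basis of $\CV_m(\CW_\g)$ supplied by Theorem~\ref{thm:Qop}. By \eqref{reprodK},
\[
  \CK_n^{(\g)}(x,y)=\sum_{m=0}^n\Bigl(\sum_k {}_1Q_{k,m}^{(\g)}(x)\,{}_1Q_{k,m}^{(\g)}(y)+\sum_k {}_2Q_{k,m}^{(\g)}(x)\,{}_2Q_{k,m}^{(\g)}(y)\Bigr),
\]
where for even $m$ the basis is given by \eqref{Qeven2} and for odd $m$ by \eqref{Qodd2}. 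I would split this into four sub-sums according to the parity of $m$ and the label (${}_1Q$ versus ${}_2Q$), and identify each sub-sum --- up to an explicit polynomial prefactor --- with a reproducing kernel for $W_\g$ or for one of the $W_\g^{(i,j)}$, evaluated at the pair $(s,t)$ of \eqref{eq:s-t}.

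For the even-degree, type-one sub-sum we have ${}_1Q_{k,2j}^{(\g)}(x)=P_{k,j}^{(\g)}(s)$ and ${}_1Q_{k,2j}^{(\g)}(y)=P_{k,j}^{(\g)}(t)$, so summing over $0\le k\le j$ and $0\le j\le\lfloor n/2\rfloor$ gives $\sum_{j\le\lfloor n/2\rfloor}\sum_{k\le j}P_{k,j}^{(\g)}(s)P_{k,j}^{(\g)}(t)=K_{\lfloor n/2\rfloor}^{(\g)}(s,t)$, by \eqref{reprodK} applied to $W_\g$. For the even-degree, type-two sub-sum one pulls out the common factor $(b_\g^{(1,1)})^2(x_1^2-x_2^2)(y_1^2-y_2^2)$ and reindexes $\ell=j-1$; the remaining double sum $\sum_{\ell\le\lfloor n/2\rfloor-1}\sum_{k\le\ell}P_{k,\ell}^{(\g),1,1}(s)P_{k,\ell}^{(\g),1,1}(t)$ equals $K_{\lfloor n/2\rfloor-1}^{(\g),1,1}(s,t)$, and $(b_\g^{(1,1)})^2=a_{w^{(1,1)}}^\g/a_w^\g=d_{w^{(1,1)}}^\g$. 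The two odd-degree sub-sums are treated the same way: the type-one one yields the prefactor $(x_1+x_2)(y_1+y_2)$, the constant $(b_\g^{(0,1)})^2=d_{w^{(0,1)}}^\g$, and the kernel $K_{\lfloor(n-1)/2\rfloor}^{(\g),0,1}(s,t)$, while the type-two one yields $(x_1-x_2)(y_1-y_2)$, the constant $(b_\g^{(1,0)})^2=d_{w^{(1,0)}}^\g$, and the kernel $K_{\lfloor(n-1)/2\rfloor}^{(\g),1,0}(s,t)$. Adding the four contributions produces \eqref{eq:repCW}.

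The step that requires attention is the index bookkeeping, which I would carry out separately for $n$ even and $n$ odd: the even degrees $\le n$ are the $2j$ with $j=0,\dots,\lfloor n/2\rfloor$, and the odd degrees $\le n$ are the $2j+1$ with $j=0,\dots,\lfloor(n-1)/2\rfloor$, which is precisely why the four kernels on the right-hand side of \eqref{eq:repCW} are truncated at levels $\lfloor n/2\rfloor$, $\lfloor n/2\rfloor-1$, $\lfloor(n-1)/2\rfloor$, and $\lfloor(n-1)/2\rfloor$ (with the convention that $K^{(\g)}_{-1}$ and $K^{(\g),i,j}_{-1}$ are $0$, which covers the smallest values of $n$; one checks in passing that each of the four terms has total degree $\le n$ in $x$ and in $y$, as it must). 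Beyond this the argument is purely formal --- it uses only Theorem~\ref{thm:Qop}, the bilinear formula \eqref{reprodK} for the reproducing kernels of $W_\g$ and of the $W_\g^{(i,j)}$, and the relation $(b_\g^{(i,j)})^2=d_{w^{(i,j)}}^\g$ --- so there is no analytic difficulty.
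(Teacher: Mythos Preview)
Your argument is correct. You compute $\CK_n^{(\g)}$ directly by expanding the bilinear sum \eqref{reprodK} over the orthonormal basis of Theorem~\ref{thm:Qop} and regrouping into four sub-sums, one for each parity/label combination; the paper instead takes the right-hand side of \eqref{eq:repCW} as given and verifies that it reproduces each basis element ${}_iQ_{k,m}^{(\g)}$, using the integral identity \eqref{Int-P-Q} to pass from $[-1,1]^2$ to $\Omega$ and then the reproducing property of the kernels $K_m^{(\g)}$, $K_m^{(\g),i,j}$. The two routes are dual---yours is a forward computation, the paper's a verification---and both rest on exactly the same ingredients (Theorem~\ref{thm:Qop} and the bilinear formula \eqref{reprodK}). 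Your approach is marginally more direct since it never needs to invoke \eqref{Int-P-Q} explicitly; the paper's makes the reproducing-kernel characterization more visible. Neither has any real advantage in depth or generality.
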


\begin{proof}
We consider $\CK_{2n}^{(\g)}(x,y)$. By the definition of $K_n^{(\g)}(\cdot, \cdot)$ as in 
\eqref{reprodK} and Theorem \ref{thm:Qop}, it follows readily that $\CK_{2n}^{(\g)}(x,y)$
belongs to $\Pi_{2n}^d$ as a function of either $x$ or $y$. To see that it reproduces polynomials 
in $\Pi_{2n}^d$, we verify 
$$
   \la \CK_{2n}^{(\g)}(x, \cdot),  {}_iQ_{k, m}^{(\g)} \ra_{\CW_{g}} =   {}_iQ_{k, m}^{(\g)}, 
           \quad 0 \le k \le m \le 2n, \quad i =1,2,
$$ 
using \eqref{Int-P-Q} and \eqref{IpCW}. For ${}_1Q_{k,2m}^{(\g)}$, this follows immediately from
\eqref{Int-P-Q} and the reproducing property of $K_n^{(\g)}$, since among the four terms in 
the right hand side of \eqref{eq:reprodW-1/2}, only the first term has a non-zero inner product with
${}_1Q_{k,2m}^{(\g)}$ by orthogonality. For ${}_2Q_{k,2m}^{(\g)}$, we use \eqref{Qodd2} and, 
in addition, $d_{w^{(1,1)}}^\g (x_1^2-x_2^2)^2 \CW_{\g}(x_1,x_2)  = W_{\g}^{(1,1)}(u_1,u_2)$. The 
other two cases, ${}_iQ_{k,2m+1}^{\a,\b,\g}$ with $i =1,2$, work out similarly.
\end{proof}

In the case of $\CW_{\a,\b,\g}$, the formula for $\CK_n^{\a,\b,\g}(\cdot,\cdot)$ takes the form 
\begin{align} \label{eq:reprodCW}
    \CK_{n}^{\a,\b,\g}(x,y)  =  \,K_{\lfloor \frac{n}{2} \rfloor}^{\a,\b,\g}(s,t) & +  
        d_{\a,\b,\g}^{(1,1)} (x_1^2-x_2^2)(y_1^2-y_2^2) K_{\lfloor \frac{n}{2} \rfloor -1}^{\a+1,\b+1,\g}(s,t) \\
            &\, + d_{\a,\b,\g}^{(0,1)}   (x_1+x_2)(y_1+y_2) K_{\lfloor \frac{n-1}{2} \rfloor}^{\a,\b+1,\g}(s,t) \notag\\
            &\, + d_{\a,\b,\g}^{(1,0)}  (x_1-x_2)(y_1-y_2) K_{\lfloor \frac{n-1}{2} \rfloor }^{\a+1,\b,\g}(s,t),  \notag
\end{align}
where $d_{\a,\b,\g}^{(i,j)} = a_{\a+i,\b+j,\g}/a_{\a,\b,\g}$ for $i,j = 0,1$. In the case of $\g = \pm \frac12$, 
we can then use Theorem \ref{thm:reprodW} to deduce closed 
formulas for the reproducing kernel $ \CK_{n}^{\a,\b, \pm \frac12}(\cdot,\cdot)$, which take 
simpler forms in the variables 
$$
  ( x_1,x_2) = (\cos \t_1, \cos \t_2) \quad \hbox{and}\quad (y_1,y_2) = (\cos \phi_1, \cos \phi_2).
$$
Indeed, using the relation \eqref{theta-phi} and $(s,t)$ in \eqref{eq:s-t}, it follows from 
\eqref{eq:reprodW-1/2} that
\begin{align} \label{Kn-st-1/2}
    & K_n^{\a,\b,- \frac12}(s,t)  \\
        & \quad = \frac12 \left[  k_n^{\a,\b}(\cos (\t_1-\t_2), \cos (\phi_1 - \phi_2))  
            k_n^{\a,\b}(\cos (\t_1+\t_2), \cos (\phi_1 + \phi_2))  \right.   \notag \\
       & \quad \quad \left.  +  k_n^{\a,\b}(\cos (\t_1-\t_2), \cos (\phi_1 + \phi_2)) 
               k_n^{\a,\b}(\cos (\t_1+\t_2), \cos (\phi_1 - \phi_2))  \right], \notag
\end{align}
and, since $\cos (\t_1- \t_2) - \cos (\t_1 + \t_2) = 2 \sin \t_1\sin \t_2$, 
\begin{align} \label{Kn-st+1/2}
    & K_n^{\a,\b, \frac12}(s,t) = \frac18 (\sin \t_1 \sin \t_2 \sin \phi_1\sin \phi_2)^{-1} \\
        & \quad \times  \left [ k_{n+1}^{\a,\b}(\cos (\t_1-\t_2), \cos (\phi_1 - \phi_2))  
            k_{n+1}^{\a,\b}(\cos (\t_1+\t_2), \cos (\phi_1 + \phi_2)) \right.    \notag \\
       & \quad \quad \left. -  k_{n+1}^{\a,\b}(\cos (\t_1-\t_2), \cos (\phi_1 + \phi_2)) 
               k_{n+1}^{\a,\b}(\cos (\t_1+\t_2), \cos (\phi_1 - \phi_2))\right]. \notag
\end{align}
Substituting \eqref{Kn-st-1/2} and \eqref{Kn-st+1/2} into \eqref{eq:reprodCW} gives a compact
formula of $\CK_n^{\a,\b,\pm \frac12}$ in terms of the reproducing kernels of Jacobi polynomials. 

In the case of $\a = \b = -1/2$, the weight function $\CW_{\a,\b,-\frac12}$ is the product
Chebyshev weight 
$$
    W_{-\frac12,-\frac12,-\frac12}(x_1,x_2) = \frac{1}{\pi^2 \sqrt{1-x_1^2} \sqrt{1-x_2^2}}, \quad (x_1,x_2) \in [-1,1]^2.
$$
Even in this case, the formula \eqref{eq:reprodCW} is new. Previously, another closed formula
for the kernel $K_n(W_0; \cdot,\cdot)$ was given in \cite{X95}. Our new formula, however, is 
more easily adopted for studying convergence of Fourier orthogonal expansions as seen in our
next section.

\section{Fourier Orthogonal Expansions} 
\setcounter{equation}{0}

In this section, we study orthogonal expansions for both $W_{-\frac12}$ on $\Omega$
and $\CW_{ -\frac12}$ on $[-1,1]$. The results include both $L^p$ convergence and the 
uniform convergence. The $L^p$ convergence will be established for $W_{-\frac12}$ and $\CW_{-\frac12}$ 
associated with the generalized Jacobi weight defined in \eqref{GJ}. The uniform convergence 
will be established for  $W_{-\frac12}$ and $\CW_{-\frac12}$ associated with the Jacobi weight. 

We are mainly interested in the case of $\CW_{-\frac12}$, which lives on the square $[-1,1]^2$. 
The study of the $L^p$ convergence of the Fourier orthogonal expansion on the square has been 
lagging behind, perhaps unexpected, the study on the triangle and on the disk. In fact, the $L^p$ 
convergence for the product Jacobi weight on the square has not been established. One reason is 
the lack of a useable formula for the reproducing kernel, which, as we explained in Remark 2.1, does 
not have a product structure. Given this background, our result (see Subsection 5.3) is somewhat 
surprising, as it shows that the case of the weight function 
$$
    W_{\a,\b,-\frac12}(x,y) =  |x-y|^{2 \a+1}|x+y|^{2 \b+1} (1-x^2)^{-\frac12}(1-y^2)^{-\frac12},
$$
or more general $W_{-\frac12}$ in  \eqref{GJ-exp}, can be worked out so much easier than that of the product Jacobi weight function 
on the square.

To get to $\CW_{-\frac12}$ on $[-1,1]$, we need to deal with $W_{-\frac12}$ on $\Omega$ first, which in
return relies on results on $w \in GJ$. In our first subsection we prove some results 
for the generalized Jacobi series of one variable, which are then used to study orthogonal expansions 
for $W_{-\frac12}$ in the second subsection. The results for $\CW_{-\frac12}$ are presented in the third 
subsection. Throughout this section, the constant $c$ will denote a generic constant, its 
value may change from line to line, and we denote the ordinary Jacobi weight function 
by $J_{\a,\b}$, that is, 
$$
J_{\a,\b}(x) := (1-x)^\a (1+x)^\b, \qquad -1 < x< 1. 
$$

\subsection{Orthogonal expansions in generalized Jacobi polynomials}

Let $w$ be a generalized Jacobi weight, $w \in GJ$, defined in \eqref{GJ}. For $1 \le p \le \infty$, 
the $L^p$ norm of $f \in L^p(w,[-1,1])$ is defined by 
$$
    \|f\|_{w,p} = \left( \int_{-1}^1 |f(y)|^p  w (y)dy \right)^{1/p}, \quad 1 \le p < \infty,
$$
and for $ p = \infty$, we replace the $L^p$ space by $C[-1,1]$ with the uniform norm 
$\|f\|_{w, \infty} = \|f\|_{\infty}$. For $1 < p < \infty$ let $q$ be defined by $1/p + 1/q =1$.

Recall the partial sum operator $s_n(w; f)$, see \eqref{eq:partial}, of the orthogonal expansion. 
For proving the mean convergence of $\CS_n(\CW_{-\frac12}; f)$, we need to study the convergence 
of a family of operators closely related to $s_n(w)$, $w \in GJ$. For $i, j \ge 0$, we define   
\begin{align}\label{snij}
  s_n^{i,j} (w; f, x) := J_{\frac{i}{2},\frac{j}{2}}(x)  
        \int_{-1}^1 f(y) k_n(J_{i,j} w; x,y) w(y) J_{\frac{i}2, \frac{j}{2}}(y)dy. 
\end{align}
Evidently, $ s_n(w) =  s_n^{0,0}(w)$. We shall show that these operators have the same
convergence behavior as that of $s_n (w;f)$. 

Standard Hilbert space theory shows that $s_n (w; f)$ converges to $f$ in $L^2(w)$ norm.
The following theorem gives the convergence of $s_n^{i,j}(w; f)$ in $L^p$ space. 

\begin{thm} \label{Jacobi-mean-ij}
Let $w, u,  v \in GJ$. Then for $1 < p < \infty$,
\begin{equation} \label{mean-conv-ij}
   \|s_n^{i,j}(w;f) u \|_{w,p} \le c \|f v\|_{w,p}, \quad  i, j \ge 0.
\end{equation}
for every $f$ such that $\|f v\|_{w,p} < \infty$ if and only if 
\begin{align}\label{mean-cond}
\begin{split}
     u^p w \in L^1, & \qquad u^p  (w J_{\frac12, \frac12})^{-\frac{p}{2}} w \in L^1, \\
     v^{-q} w \in L^1, & \qquad  v^{-q}  (w J_{\frac12,\frac12})^{-\frac{q}{2}} w \in L^1,\\
  \hbox{and}\quad  u(x) \le v(x), & \qquad \hbox{$x \in (-1,1)$}.
\end{split}
\end{align}
In particular, \eqref{mean-conv-ij} implies that $\| (s_n (w; f - f)u\|_{w ,p} \to 0$ when $n\to \infty$
for every $f$ such that $\|f v\|_{w,p} < \infty$.
\end{thm}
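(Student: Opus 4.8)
The plan is to reduce the weighted norm inequality \eqref{mean-conv-ij} to a known result on the boundedness of the partial sum operators $s_n(w;\cdot)$ in weighted $L^p$ spaces for generalized Jacobi weights. The operators $s_n^{i,j}(w;f)$ defined in \eqref{snij} differ from $s_n(J_{i,j}w; f)$ only by multiplication by the explicit factor $J_{i/2,j/2}$ outside and inside the integral, so the first step is to absorb these factors and the weights $u,v$ into the framework of a single weighted-norm bound for a Christoffel--Darboux type kernel. Concretely, writing $\widetilde w = J_{i,j} w$, which is again a generalized Jacobi weight, one has
\begin{align*}
   s_n^{i,j}(w;f,x) = J_{\frac i2,\frac j2}(x) \int_{-1}^1 f(y)\, k_n(\widetilde w; x,y)\, \widetilde w(y)\, J_{-\frac i2,-\frac j2}(y)\, dy,
\end{align*}
so that $\|s_n^{i,j}(w;f)\,u\|_{w,p} = \|s_n(\widetilde w; f J_{-\frac i2,-\frac j2})\, U\|_{\widetilde w, p}$ up to the change of the reference weight between $w$ and $\widetilde w$ (which only costs a fixed power of $J_{i,j}$), where $U = u J_{\frac i2,\frac j2} J_{-i,-j}^{1/p}$ or a similar explicit modification. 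Thus the claim becomes a statement of the form $\|s_n(\widetilde w; g)\,U\|_{\widetilde w, p} \le c\, \|g\,V\|_{\widetilde w, p}$.

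The second step is to invoke the known characterization of such inequalities. For generalized Jacobi weights, the necessary and sufficient conditions on a pair of weights for the uniform (in $n$) boundedness of $s_n$ are classical, going back to work of Badkov, Muckenhoupt, and others, and are recorded in the references \cite{Ba, N} cited earlier in the excerpt. The conditions typically read: a suitable $A_p$-type condition together with the integrability requirements $U^p \widetilde w \in L^1$, $V^{-q}\widetilde w \in L^1$ near the endpoints and near the mass points, an extra condition coming from the local behavior of $k_n$ near $x = \pm 1$ involving the $\pm\frac14$-power correction (this is the source of the $(wJ_{\frac12,\frac12})^{-p/2}$ and $(wJ_{\frac12,\frac12})^{-q/2}$ terms), and the pointwise comparison $U \le V$. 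The task is then purely bookkeeping: substitute $U$ and $V$ in terms of $u$, $v$, $w$, $J_{i,j}$, simplify the endpoint/mass-point exponents, and check that the abstract conditions translate exactly into the three lines of \eqref{mean-cond}. The power $J_{\frac12,\frac12}$ appearing there is precisely $(1-x^2)^{1/2}$, reflecting the $(1-x)^{-1/4}(1+x)^{-1/4}$ envelope of Jacobi polynomials; the exponent $-p/2$ and $-q/2$ come from raising this envelope to the $p$ (resp.\ $q$) power and redistributing the weight.

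The final step is to deduce the convergence statement $\|(s_n(w;f)-f)u\|_{w,p}\to 0$. This is standard once \eqref{mean-conv-ij} is known with $i=j=0$, $v=u$: polynomials are dense in the space $\{f : \|fu\|_{w,p}<\infty\}$ under the conditions \eqref{mean-cond} (using $u^p w \in L^1$ and $u \le v$), $s_n$ fixes polynomials of degree $\le n$, and the uniform bound gives equicontinuity, so a standard $\varepsilon/3$ argument closes it. The main obstacle I expect is not any single hard estimate but rather getting the translation between the abstract two-weight conditions in \cite{Ba, N} and the precise form \eqref{mean-cond} exactly right --- in particular tracking how the auxiliary factors $J_{i/2,j/2}$ inside and outside the integral redistribute between the ``test weight'' $u$, the ``domination weight'' $v$, and the underlying measure $w\,dx$, and verifying that the endpoint-singularity bookkeeping produces the specific half-integer shift $J_{\frac12,\frac12}$ with exponents $-p/2$ and $-q/2$ and nothing more. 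A secondary subtlety is ensuring the necessity direction: one must exhibit, for each condition in \eqref{mean-cond} that fails, an explicit $f$ (typically a test function concentrated at an endpoint or a mass point, or a Jacobi polynomial of degree $n$ itself) witnessing the failure of \eqref{mean-conv-ij}, which again is routine but must be done for each of the several conditions.
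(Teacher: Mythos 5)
Your proposal follows essentially the same route as the paper: rewrite $s_n^{i,j}(w;f) = J_{\frac i2,\frac j2}\, s_n(J_{i,j}w; f/J_{\frac i2,\frac j2})$, absorb the factors into modified weights $u_{i,j}=J_{\frac i2,\frac j2}J_{i,j}^{-1/p}u$, $v_{i,j}=J_{\frac i2,\frac j2}J_{i,j}^{-1/p}v$, invoke the known two-weight characterization for the $i=j=0$ case (the paper cites \cite{X93} rather than \cite{Ba,N}, but this is the same base result), and check that the conditions \eqref{mean-cond} are exactly invariant under this shift --- which is the bookkeeping you correctly identify, including the cancellation producing the $J_{\frac12,\frac12}$ terms. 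This is the paper's proof, so the proposal is correct and essentially identical in approach.
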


\begin{proof}
For $i = j =0$, this result was proved in \cite{X93} (for various earlier results, see \cite{Nevai2} 
and the references therein). We show that the general case of $s_n^{i,j}(w; f)$ 
can  be deduced from the case $i = j = 0$. The operators $s_n^{i,j}(w)$ can be expressed in terms of 
the partial sums of Jacobi series. Let us define
$$
       f_{i,j}(x):=  f(x) / J_{\frac{i}{2}, \frac{j}{2}} (x). 
$$
Then directly from its definition \eqref{snij}, we see that 
\begin{align}  \label{sn-ij}
  s_n^{i,j} (w; f, x)  =  J_{\frac{i}{2}, \frac{j}{2}} (x) \, s_n (J_{i,j}w; f_{i,j}, x).  
\end{align}
The inequality \eqref{mean-conv-ij} is easily seen to be equivalent to, using \eqref{sn-ij}, 
\begin{equation} \label{mean-inequa-ij}
        \|s_n (J_{i,j} w; f) u_{i,j} \|_{J_{i,j} w,p} \le c \|f v_{i,j}\|_{J_{i,j}w,p}
\end{equation}
if we define $u_{i,j}$ and $v_{i,j}$ by  
$$
   u_{i,j}(y) := J_{\frac{i}{2}, \frac{j}{2}}(y) (J_{i,j}(y))^{-\frac{1}{p}} u(y) \quad\hbox{and}\quad
   v_{i,j}(y):=  J_{\frac{i}{2}, \frac{j}{2}}(y) (J_{i,j}(y))^{-\frac{1}{p}} v(y).
$$
The inequality \eqref{mean-inequa-ij} holds, by the result for $i = j = 0$, under the condition 
\eqref{mean-cond} with $w$ replaced by $J_{i,j} w$, $u$ and $v$ replaced by 
$u_{i,j}$ and $v_{i,j}$. We now verify that these conditions hold under \eqref{mean-cond}.
The condition $u_{i,j}(y) \le  v_{i,j}(y)$ holds evidently under $u(y) \le v(y)$ 
of \eqref{mean-cond}. A quick computation shows that 
$$
  u_{i,j}^p J_{i,j} w  = J_{i,j}^{p/2} u^p w, \quad
  v_{i,j}^{-q} J_{1,0}w  = J_{i,j}^{q/2} v^{-q} w,
$$
so that, using $w_{i,j}(y) \le c$, both are $L^1$ functions under \eqref{mean-cond}. A 
similar computation shows that 
\begin{align*}
  u_{i,j}^p (J_{i+\frac12,j+\frac12} w)^{-\frac{p}{2}} J_{i,j}w &\ 
                      = u^p (J_{\frac12,\frac12} w)^{-\frac{p}{2}} w,\\
  v_{i,j}^{-q} (J_{i+\frac12,j+\frac12} w )^{-\frac{q}{2}} J_{i,j}w &\ 
                      = v^{-q} (J_{\frac12,\frac12} w )^{-\frac{q}{2}} w.
\end{align*}
Since the right hand sides of the these two expressions are exactly those appeared in \eqref{mean-cond}, 
all conditions under which \eqref{mean-inequa-ij} hold are verified under \eqref{mean-cond}. This 
establishes \eqref{mean-conv-ij}.
\end{proof}

The special case $u(x) = v(x) \equiv 1$ and $w$ is a multiple of the Jacobi weight is stated below as 
a corollary,  in which the conditions in \eqref{mean-cond} are simplified to \eqref{Jacobi-cond} below. 

\begin{cor}
Let $w(x) = \psi(x) (1-x)^\a (1+x)^\b$, where $\a, \b > -1$ and $\psi$ is as in \eqref{GJ}, and let $i,j \ge 0$. 
Then 
$$
         \|s_n^{i,j}(w; f)\|_{w, p} \le c \|f\|_{w,p} 
$$
for every $f \in L^p(w, [-1,1])$ if and only if 
\begin{equation} \label{Jacobi-cond}
        2 - \frac{2}{2\max\{\a,\b\} +3} < p < 2 +\frac{2}{2\max\{\a,\b\} +1}.
\end{equation}
\end{cor}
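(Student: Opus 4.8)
The plan is to read off this corollary as the special case $u(x)=v(x)\equiv 1$ of Theorem~\ref{Jacobi-mean-ij}, so that the entire content is a translation of the integrability conditions \eqref{mean-cond} into the single two‑sided inequality \eqref{Jacobi-cond}. With $u=v\equiv 1$ the requirement $u(x)\le v(x)$ is an equality and is free, while both $u^pw\in L^1$ and $v^{-q}w\in L^1$ collapse to $w\in L^1$, which holds since $\a,\b>-1$ and $\psi$ is bounded on $[-1,1]$. Hence only the two conditions
$$
  (wJ_{\frac12,\frac12})^{-\frac p2}\,w\in L^1
  \qquad\text{and}\qquad
  (wJ_{\frac12,\frac12})^{-\frac q2}\,w\in L^1
$$
survive, and the whole proof consists of making these explicit.

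Next I would compute $wJ_{\frac12,\frac12}(x)=\psi(x)(1-x)^{\a+\frac12}(1+x)^{\b+\frac12}$, so that
$$
  (wJ_{\frac12,\frac12})^{-\frac p2}\,w(x)
   =\psi(x)^{1-\frac p2}\,(1-x)^{\,\a-\frac p2(\a+\frac12)}\,(1+x)^{\,\b-\frac p2(\b+\frac12)}.
$$
Because $\psi$ is continuous and strictly positive on $[-1,1]$, the factor $\psi^{1-\frac p2}$ is bounded above and below by positive constants, so $L^1$‑membership is decided purely by the exponents at $x=\pm1$: the first condition is equivalent to $\a-\frac p2(\a+\frac12)>-1$ and $\b-\frac p2(\b+\frac12)>-1$, i.e.\ to $p<2+\frac{2}{2\a+1}$ and $p<2+\frac{2}{2\b+1}$, with the understanding that when $2\a+1\le 0$ (resp.\ $2\b+1\le 0$) the exponent is automatically $>\a>-1$ (resp.\ $>\b>-1$) and so imposes no upper bound. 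Carrying out the same computation with $p$ replaced by $q$ turns the second condition into $q<2+\frac{2}{2\a+1}$ and $q<2+\frac{2}{2\b+1}$; then, substituting $q=p/(p-1)$ and solving the resulting linear inequalities for $p$, these become $p>2-\frac{2}{2\a+3}$ and $p>2-\frac{2}{2\b+3}$ (here $2\a+3,\,2\b+3>1$, so these expressions never degenerate, and when the $q$‑conditions are vacuous the corresponding lower bound is $\le 1$, hence harmless within the range $1<p<\infty$).

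Finally I would combine the four inequalities: since $t\mapsto 2+\frac{2}{2t+1}$ is decreasing and $t\mapsto 2-\frac{2}{2t+3}$ is increasing on the relevant range, the two upper bounds amalgamate into $p<2+\frac{2}{2\max\{\a,\b\}+1}$ and the two lower bounds into $p>2-\frac{2}{2\max\{\a,\b\}+3}$, which is exactly \eqref{Jacobi-cond}; as Theorem~\ref{Jacobi-mean-ij} is an equivalence, both directions of the corollary follow at once. I do not expect any genuine obstacle here: the only points needing care are the conjugate‑exponent bookkeeping $q=p/(p-1)$ and checking that the two pairs of endpoint conditions really do collapse into the single $\max$ condition, while the continuity and positivity of $\psi$ guarantee that the perturbation factor never affects the admissible range of $p$.
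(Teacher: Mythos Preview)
Your proposal is correct and matches the paper's approach exactly: the paper does not give a proof at all, it merely introduces the corollary as ``the special case $u(x)=v(x)\equiv 1$ and $w$ is a multiple of the Jacobi weight \ldots\ in which the conditions in \eqref{mean-cond} are simplified to \eqref{Jacobi-cond}.'' Your write-up is precisely the verification of that simplification, and the endpoint bookkeeping (including the vacuous cases when $2\a+1\le 0$ or $2\b+1\le 0$) is handled correctly.
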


The Theorem \ref{Jacobi-mean-ij} settles the mean convergence of $s_n^{i,j}(w;f)$ for 
$1 < p < \infty$. For the cases of $p =1$ or $p =\infty$, it is easily seen that 
\begin{align} \label{Lebsgue}
  \|s_n^{i,j}(w) \|_{\infty} & =  \|s_n(w)\|_{w, 1} \\
    & = \max_{x\in [-1,1]}
         J_{\frac{i}{2},\frac{j}{2}}(x)  \int_{-1}^1 |k_n ( J_{i,j}w; x,y)| J_{\frac{i}{2}, \frac{j}{2} }(y) w(y) dy. \notag
\end{align}
In fact, the proof in the case that $i=j = 0$ is classical and it carries over just as well in the 
general case. We shall determine the order of $s_n^{i,j}$ for the classical Jacobi weight $w = J_{\a,\b}$ and 
denote, for simplicity,   
$$
    s_n^{(\a,\b),i,j} f  := s_n^{i,j} (J_{\a,\b}; f). 
$$ 
By definition, $s_n^{(\a,\b)}  =  s_n^{(\a,\b),0,0}$ is the partial sum of the classical Jacobi series. The 
quantity $\|s_n^{(\a,\b)}\|_{\infty}$, sometimes called the Lebesgue constant, determines the convergence
behavior of $s_n^{(\a,\b)}f$ when $p =1$ and $p = \infty$. 

The asymptotic order of $\|s_n^{(\a,\b)}\|_{\infty}$ is usually determined by using the convolution 
structure of the Jacobi series, which shows that the maximum 
$$
\|s_n^{(\a,\b)} \|_{\infty}  = \max_{x\in [-1,1]}
          \int_{-1}^1 |k_n^{(\a,\b)}(x,y)|  J_{\a,\b} (y) dy
$$
is attained at the point $x = 1$. The same scheme, however, does not apply to $\|s_n^{(\a,\b), i,j}\|_\infty$ 
if either $i > 0$ or $j > 0$ because of the factor $(1-x)^{i/2} (1+x)^{j/2}$ in front. Nevertheless, the result 
still holds and it can be proved by using a sharp estimate of the kernel function of $s_n^{(\a,\b),i,j} (f)$ given by  
\begin{align*}
   k_n^{(\a,\b),i,j}(\cos \t,\cos \phi) &:= (1-x)^{\frac{i}{2}} (1+ x)^{\frac{j}{2}}(1-y)^{\frac{i}{2}} (1+ y)^{\frac{j}{2}}
         k_n^{(\a+i,\b+j)}(x,y) \\ 
    &  = (\sin \tfrac{\t}{2}\sin \tfrac{\phi}{2})^i (\cos \tfrac{\t}{2} \cos \tfrac{\phi}{2})^j 
         k_n^{(\a+i, \b+j)}(\cos \t, \cos\phi). 
\end{align*}
Evidently,  $k_n^{(\a,\b),0,0}(\cdot,\cdot) =  k_n^{(\a,\b)} (\cdot, \cdot)$. It turns out that the kernels in
this family have the same upper estimate. 
 
\begin{lem}
Let $\a,\b \ge  -1/2$ and $i,j \ge 0$. Then
\begin{align} \label{kernel-esti}
  | k_n^{(\a,\b),i,j}(\cos \t, \cos \phi) |  \qquad\qquad\qquad\qquad \qquad\qquad\qquad\qquad \qquad\qquad\qquad  \\
  \le c   \frac{(\sin \tfrac{\t}{2}\sin \tfrac{\phi}{2} +  n^{-1} |\t-\phi| + n^{-2})^{-\a- \frac12}   
 (\cos \tfrac{\t}{2}\cos \tfrac{\phi}{2} + n^{-1} |\t-\phi| + n^{-2})^{-\b  - \frac12}}{|\t - \phi| + n^{-1}}. \notag  
\end{align}
\end{lem}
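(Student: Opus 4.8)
The plan is to reduce the estimate to known pointwise bounds for the reproducing kernel of the classical Jacobi polynomials. Recall that $k_n^{(\a,\b),i,j}(\cos\t,\cos\phi) = (\sin\tfrac{\t}{2}\sin\tfrac{\phi}{2})^i (\cos\tfrac{\t}{2}\cos\tfrac{\phi}{2})^j k_n^{(\a+i,\b+j)}(\cos\t,\cos\phi)$, so the claim is really a statement about the ordinary Jacobi kernel $k_n^{(A,B)}$ with $A=\a+i\ge\a$ and $B=\b+j\ge\b$, multiplied by the extra weight factors. The starting point is the sharp pointwise estimate for $k_n^{(A,B)}$ that is standard in the literature (e.g.\ the estimates of the Christoffel function together with Jacobi polynomial asymptotics, as in the work of Nevai, or more conveniently the explicit kernel bounds of the form found in papers on Jacobi expansions): for $\a,\b\ge-\tfrac12$,
\begin{align*}
 |k_n^{(A,B)}(\cos\t,\cos\phi)| \le c\,\frac{\bigl(\sin\tfrac{\t}{2}\sin\tfrac{\phi}{2}+n^{-1}|\t-\phi|+n^{-2}\bigr)^{-A-\frac12}\bigl(\cos\tfrac{\t}{2}\cos\tfrac{\phi}{2}+n^{-1}|\t-\phi|+n^{-2}\bigr)^{-B-\frac12}}{|\t-\phi|+n^{-1}}.
\end{align*}
The first step is therefore to quote (or reprove via the Christoffel--Darboux formula \eqref{CDformula} and the standard asymptotics for $p_n^{(A,B)}$) this bound for the exponents $A=\a+i$, $B=\b+j$.

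Second, I would multiply through by the extra factors $(\sin\tfrac{\t}{2}\sin\tfrac{\phi}{2})^i(\cos\tfrac{\t}{2}\cos\tfrac{\phi}{2})^j$ and show that the product is dominated by the right-hand side of \eqref{kernel-esti}. This amounts to the two elementary inequalities
$$ (\sin\tfrac{\t}{2}\sin\tfrac{\phi}{2})^i \bigl(\sin\tfrac{\t}{2}\sin\tfrac{\phi}{2}+n^{-1}|\t-\phi|+n^{-2}\bigr)^{-(\a+i)-\frac12} \le c\bigl(\sin\tfrac{\t}{2}\sin\tfrac{\phi}{2}+n^{-1}|\t-\phi|+n^{-2}\bigr)^{-\a-\frac12}, $$
and the analogous one with sines replaced by cosines and $\a,i$ replaced by $\b,j$. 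Each of these follows from the trivial observation that $X^i(X+Y)^{-i}\le 1$ whenever $X,Y\ge0$ with $X = \sin\tfrac{\t}{2}\sin\tfrac{\phi}{2}$ (resp.\ $\cos\tfrac{\t}{2}\cos\tfrac{\phi}{2}$) and $Y = n^{-1}|\t-\phi|+n^{-2}\ge0$; the leftover exponent $-\a-\tfrac12$ is exactly what appears in \eqref{kernel-esti}. Combining the two reductions gives the bound directly.

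The main technical obstacle, should one want a self-contained argument rather than citing the kernel bound, is establishing the base estimate for $k_n^{(A,B)}$ itself. The standard route is to split the domain of $(\t,\phi)$ into regions according to whether $\t,\phi$ are near $0$, near $\pi$, in the bulk, and whether $|\t-\phi|$ is small or comparable to $\t$ or $\phi$; in each region one inserts the appropriate asymptotic formula for $p_n^{(A,B)}(\cos\t)$ (the oscillatory Darboux formula in the bulk, and the Bessel-type/Mehler--Heine behavior near the endpoints) into the Christoffel--Darboux identity \eqref{CDformula}, and estimates the resulting difference quotient $\tfrac{p_{n+1}(\cos\t)p_n(\cos\phi)-p_{n+1}(\cos\phi)p_n(\cos\t)}{\cos\t-\cos\phi}$ using $|\cos\t-\cos\phi|\sim|\t-\phi|(\sin\tfrac{\t+\phi}{2})$ or a mean-value bound when $|\t-\phi|$ is small compared with $n^{-1}$. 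This case analysis is routine but lengthy; since the inequality for $\a,\b\ge-\tfrac12$ with $i=j=0$ is exactly the classical result, I would simply invoke it and devote the written proof only to the short multiplicative reduction described above. Thus the proof is essentially: quote the $i=j=0$ kernel estimate with shifted parameters, then absorb the weight factors $X^i(X+Y)^{-i}\le1$.
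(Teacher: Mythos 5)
Your proposal is correct and follows essentially the same route as the paper: quote the sharp pointwise bound for the ordinary Jacobi kernel with shifted parameters $(\a+i,\b+j)$ (the paper obtains it from Theorem 2.7 of the Dai--Xu reference with $\delta=0$, $d=1$, rather than from the classical asymptotics you suggest, but it is the same base estimate), and then absorb the factors $(\sin\tfrac{\t}{2}\sin\tfrac{\phi}{2})^i(\cos\tfrac{\t}{2}\cos\tfrac{\phi}{2})^j$ via $X^i(X+Y)^{-i}\le 1$ to lower the exponents back to $-\a-\tfrac12$ and $-\b-\tfrac12$.
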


\begin{proof}
In the case of $(i,j) = (0,0)$, the estimate is derived from \cite[Theorem 2.7]{DaiX} by setting $\d = 0$ 
and $d =1$, changing variable from $[0,1] \times [0,1]$ to $[-1,1] \times [-1,1]$ and then to $(\cos \t, \cos \phi)$ 
in $[0,\pi] \times [0,\pi]$, and applying elementary trigonometric identities. There are two terms in the 
estimate in \cite[Theorem 2.7]{DaiX} but it is not hard to see that the second term is dominated by the 
first one when $\d = 0$ and $d =1$. 

For the general case of $i, j \ge 0$, applying \eqref{kernel-esti} to $ k_n^{(\a+i, \b+j)}(\cos \t, \cos\phi)$
and using the factors $(\sin \tfrac{\t}{2}\sin \tfrac{\phi}{2})^i$ and $(\cos \tfrac{\t}{2} \cos \tfrac{\phi}{2})^j$ 
to reduce the exponent $- (\a + i + 1/2)$ to $- (\a + 1/2)$  and the exponent $- (\b + j + 1/2)$ to $- (\b + 1/2)$,
we see that $k_n^{(\a,\b),i,j}(\cos \t,\cos \phi)$ has exactly the same upper bound as the right hand 
side of \eqref{kernel-esti}. Consequently, $ \|s_n^{(\a,\b),i,j}\|_\infty$ has the same upper bound. 
\end{proof}

\begin{thm} \label{Jacobi-uniform}
Let $\a,\b \ge  -1/2$ and $i,j \ge 0$. Then
\begin{equation} \label{uniform-Jacobi}
      \|s_n^{(\a,\b),i,j}\|_{\infty} = \CO(1) \begin{cases} n^{\max\{\a,\b\} +1/2}, & \max\{\a,\b\} > -1/2, \\
              \log n, & \max\{\a,\b\} = -1/2. \end{cases}
\end{equation}
\end{thm}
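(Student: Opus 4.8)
The plan is to express the operator norm as a weighted $L^1$-integral of the kernel $k_n^{(\a,\b),i,j}$, to bound that integral above by means of \eqref{kernel-esti}, and to bound it below by testing at a single point and invoking the classical edge asymptotics of Jacobi kernels. By \eqref{Lebsgue}, with $w=J_{\a,\b}$ (so $J_{i,j}w=J_{\a+i,\b+j}$, $k_n(J_{i,j}w;\cdot,\cdot)=k_n^{(\a+i,\b+j)}$) and since $J_{\frac i2,\frac j2}(x)\,k_n^{(\a+i,\b+j)}(x,y)\,J_{\frac i2,\frac j2}(y)=k_n^{(\a,\b),i,j}(x,y)$, one has
\[
 \|s_n^{(\a,\b),i,j}\|_\infty=\max_{x\in[-1,1]}\int_{-1}^1|k_n^{(\a,\b),i,j}(x,y)|\,J_{\a,\b}(y)\,dy .
\]
Writing $x=\cos\t$, $y=\cos\phi$ turns $J_{\a,\b}(y)\,dy$ into a constant multiple of $(\sin\tfrac\phi2)^{2\a+1}(\cos\tfrac\phi2)^{2\b+1}\,d\phi$, so it remains to estimate, uniformly in $\t\in[0,\pi]$,
\[
 I_n(\t):=\int_0^\pi|k_n^{(\a,\b),i,j}(\cos\t,\cos\phi)|\,(\sin\tfrac\phi2)^{2\a+1}(\cos\tfrac\phi2)^{2\b+1}\,d\phi .
\]
The symmetry $(\t,\phi)\mapsto(\pi-\t,\pi-\phi)$ interchanges $(\a,i)\leftrightarrow(\b,j)$, so I may assume $\a=\max\{\a,\b\}$.

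For the upper bound I would insert \eqref{kernel-esti}, whose right-hand side is \emph{independent of $i,j$}, and thus bound $I_n(\t)$ by a constant times
\[
 \int_0^\pi\frac{(\sin\tfrac\t2\sin\tfrac\phi2+n^{-1}|\t-\phi|+n^{-2})^{-\a-\frac12}(\cos\tfrac\t2\cos\tfrac\phi2+n^{-1}|\t-\phi|+n^{-2})^{-\b-\frac12}}{|\t-\phi|+n^{-1}}(\sin\tfrac\phi2)^{2\a+1}(\cos\tfrac\phi2)^{2\b+1}\,d\phi .
\]
Splitting $[0,\pi]$ into a neighbourhood of $\phi=\t$, a neighbourhood of $\phi=0$, a neighbourhood of $\phi=\pi$, and the complementary bulk makes the integrand a pure power on each piece, and all the integrations are elementary. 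The worst position is $\t\lesssim n^{-1}$, where the surviving $\phi$-integral is comparable to $\int_{c/n}^{1}\phi^{\a-\frac12}\,d\phi$, which is $\CO(1)$ for $\a>-\tfrac12$ and $\CO(\log n)$ for $\a=-\tfrac12$; this gives the asserted upper bounds. Since this majorant is exactly the one occurring for ordinary Jacobi series ($i=j=0$), one may alternatively just quote that it is $\CO(1)\,n^{\a+1/2}$, respectively $\CO(1)\log n$.

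For the matching lower bound, the displayed max-formula gives $\|s_n^{(\a,\b),i,j}\|_\infty\ge c\,I_n(\t_0)$ for any fixed $\t_0$; I would take $\t_0=c_0/n$ with $c_0$ suitably chosen (the choice $\t_0=0$ would kill the factor $(\sin\tfrac\t2)^i$). Writing $k_n^{(\a,\b),i,j}$ out and pulling out $(\sin\tfrac{\t_0}2)^i(\cos\tfrac{\t_0}2)^j\asymp n^{-i}$, it then suffices to bound $\int_0^\pi|k_n^{(\a+i,\b+j)}(\cos\t_0,\cos\phi)|\,(\sin\tfrac\phi2)^{2\a+i+1}(\cos\tfrac\phi2)^{2\b+j+1}\,d\phi$ from below. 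On the bulk range $\phi\in[c_0/n,\tfrac12]$ the Jacobi kernel near the endpoint $\t_0\approx0$ has a genuine oscillatory (Bessel/Darboux-type) asymptotic whose envelope is $\asymp n^{\a+i+\frac12}\phi^{-\a-i-\frac32}$ and whose oscillatory factor has frequency $\asymp n$, so its $L^1$-mass over any subinterval of length $\asymp n^{-1}$ is a fixed fraction of the envelope there; summing these contributions and using $(\sin\tfrac\phi2)^{2\a+i+1}\asymp\phi^{2\a+i+1}$, $(\cos\tfrac\phi2)^{2\b+j+1}\asymp1$ gives a lower bound $\gtrsim n^{\a+i+\frac12}\int_{c_0/n}^{1/2}\phi^{\a-\frac12}\,d\phi$, the $i$-exponents having cancelled. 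Since this integral is $\gtrsim1$ for $\a>-\tfrac12$ and $\gtrsim\log n$ for $\a=-\tfrac12$, multiplying back by $n^{-i}\cdot n^{\a+i+\frac12}=n^{\a+\frac12}$ completes the argument.

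The part I expect to be the main obstacle is this lower bound: unlike the upper bound it cannot be read off from the one-sided estimate \eqref{kernel-esti} and genuinely needs an \emph{asymptotic with oscillation control} for $k_n^{(\a+i,\b+j)}(\cos\t_0,\cdot)$ in the transition layer near the endpoint, which one obtains from the classical asymptotics of Jacobi polynomials together with the Christoffel--Darboux formula \eqref{CDformula}. A lesser nuisance is the bookkeeping in the upper-bound integral, which has to be done carefully enough that the bound degrades to exactly $\log n$, and no more slowly, precisely at $\max\{\a,\b\}=-\tfrac12$.
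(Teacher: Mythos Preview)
Your upper-bound strategy is essentially the paper's: both express $\|s_n^{(\a,\b),i,j}\|_\infty$ as the weighted integral of $|k_n^{(\a,\b),i,j}|$, invoke the pointwise kernel estimate \eqref{kernel-esti} (which is independent of $i,j$), and then split the $\phi$-integral into regions. The paper restricts by symmetry to $0\le\t\le\pi/2$, disposes of $\phi\in[3\pi/4,\pi]$ by the trivial bound, and on $[0,3\pi/4]$ splits further into $[0,\t/2]$, $[\t/2,3\t/2]$, $[3\t/2,3\pi/4]$; your decomposition (neighbourhoods of $\t$, of $0$, of $\pi$, and the bulk) is a minor variant of the same idea.

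One slip in your write-up: at $\t\lesssim n^{-1}$ the bound coming from \eqref{kernel-esti} on the bulk is not $\int_{c/n}^1\phi^{\a-\frac12}\,d\phi$ but rather $n^{\a+\frac12}\int_{c/n}^1\phi^{\a-\frac12}\,d\phi$, since on that range $\sin\tfrac\t2\sin\tfrac\phi2+n^{-1}|\t-\phi|+n^{-2}\asymp n^{-1}\phi$ contributes a factor $(n^{-1}\phi)^{-\a-\frac12}$. With that prefactor the cases $\a>-\tfrac12$ and $\a=-\tfrac12$ give exactly $n^{\a+\frac12}$ and $\log n$, matching the statement; your ``alternatively just quote'' line is the correct conclusion.

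The main difference from the paper is that you devote half the proposal to a \emph{lower} bound. The theorem, as stated with the $\CO(1)$ notation, asserts only an upper bound, and the paper's proof supplies only that. Your oscillatory-envelope argument for the lower bound is along the right lines (and the exponent cancellation $n^{-i}\cdot n^{\a+i+\frac12}=n^{\a+\frac12}$ is exactly what one expects), but it is not needed here and, as you yourself note, would require genuine asymptotics of $k_n^{(\a+i,\b+j)}$ rather than the one-sided estimate \eqref{kernel-esti}.
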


\begin{proof}
The case $i = j = 0$ can be deduced from the convolution structure of the Jacobi series and 
the Lebesgue function at $x =1$ (\cite[Section 9.41]{Sz}). Our proof below uses the kernel 
estimate in \eqref{kernel-esti} and works for the general case of $i,j \ge 0$. For $i, j \ge 0$, we can rewrite the norm  in 
\eqref{Lebsgue} as 
\begin{align*}
     \|s_n^{(\a,\b),i,j}\|_\infty & = \max_{x \in [0,1]} \int_{-1}^1 \left | k_n^{(\a,\b),i,j} (x,y) \right | w_{\a,\b}(y) dy  \\
       & =  \int_0^\pi  \left | k_n^{(\a,\b),i,j} (\cos \t, \cos \phi) \right |
                  (\sin \tfrac{\phi}{2})^{2\a+1} (\cos \tfrac{\phi}{2})^{2\b+1} d\phi. 
\end{align*}
By symmetry, we consider only $ 0 \le \t \le \pi/2$. If $ \frac{3 \pi}{4} \le \phi \le \pi$, then $|\t - \phi| \ge \pi /4$,
so that, by the estimate of the kernel, $|k_n^{(\a,\b)}(\cos \t, \cos \phi)| \le c$ as $\a+ \frac12 \ge 0$ and 
$\b + \frac 12 \ge 0$, so that  the integral over $[3 \pi /4, \pi]$ is bounded. On the other hand, if 
$0 \le \phi \le 3 \pi/4$, then $\cos \frac{\t}{2} \cos \frac{\phi}{2} \sim 1$, so that the estimate of the 
kernel shows that 
$$
 | k_n^{(\a,\b)}(\cos \t, \cos \phi) | \le c \frac{(\sin \tfrac{\t}{2}\sin \tfrac{\phi}{2} +  n^{-1} |\t-\phi| + n^{-2})^{-\a- \frac12}}
     {|\t - \phi| + n^{-1}}.
$$
The case $\a = -1/2$ is easier, let us assume $\a > -1/2$. We then divided the integral into three terms, 
$$
    \int_0^{3 \pi /4} \cdots d\phi = \int_0^{\t/2}\cdots  d\phi  + \int_{\t /2}^{3 \t/2} \cdots  d\phi+ 
        \int_{3 \t/2}^{3 \pi/4}\cdots  d\phi .
$$ 
Using the estimate of the kernel, these three integrals can be shown to be bounded by $c \, n^{\a+1/2}$ 
by making use of the following facts: For the integral over $[0, \t/2]$, $|\phi - \t| \sim \t$; for the integral 
over $[\t/2, 3 \t/2]$, $\t \sim \phi$; for the integral over $[3 \t/2, 3 \pi /4]$, $|\t - \phi | \ge \phi /3$.  We
leave the details to interested readers. 
\end{proof}

It should be remarked that the asymptotic order of $\|s_n^{(\a,\b)}\|_\infty$ is established for 
$\a, \b > -1$. The reason that we assume $\a, \b \ge -1/2$ lies in the fact that the estimate
of the kernel in \cite{DaiX} was established under this assumption. We expect that the result
extends to 
$$
 \|s_n^{(\a,\b),i,j}\|_\infty = \CO(1) \log n, \quad \max \{\a,\b\} \le - 1/2. 
$$
In fact, our proof already shows this estimate if both $i, j \ge 1$. Only the cases $i = 0$ or 
$j = 0$ remain. What is of interest is to extend the kernel estimate \eqref{kernel-esti}, or in
some modified form, to the range $\max \{\a,\b\} < - 1/2$. 

The reason that we restrict to the classical Jacobi weight function in our estimate of $\|s_n^{i,j}(w)\|_\infty$
is again the lack of a pointwise estimate for the kernel function. 

\subsection{Orthogonal expansions for $W_{-\frac12}$ on $\Omega$}
Let $W$ be a weight function defined on $\Omega \subset \RR^2$. We denote by $L^p(W)$ the
$L^p$ space of functions for which the norm $\|f\|_{W,p}$ is finite, where
$$
     \|f\|_{W,p} := \left( \int_\Omega |f(x)|^p W(x) dx\right)^{1/p}, \qquad 1 \le p < \infty, 
$$
and, for $p  = \infty$, we replace $L^p(W)$ by $C(\Omega)$, the space of continuous functions 
on $\Omega$ with the uniform norm $\|f\|_{W,\infty} = \|f\|_\infty$ on $\Omega$. 

In this subsection we consider the convergence of $S_n(W_{- \frac12}; f)$ with, see \eqref{Wgamma},
$$
    W_{-\frac12} (u,v) = 2 a_w w(x) w(y) |u^2- 4 v|^{-\frac12}, \qquad w \in GJ, 
$$
where $u=x+u$, $v = x y$ and $a_w$ is a normalization constant. 
Because of what we will need in the following subsection, we also define a family of operators as 
follows: For $i,j\ge 0$, 
\begin{align} \label{Sn_ij}
  S_n^{i,j}(W_{-\frac12}; f, x) := \int_\Omega f(y) K_n^{i,j} (W_{-\frac12}; x,y)W_{ -\frac12}(y)dy,
\end{align}  
where 
\begin{align} \label{Kn_ij}
 K_n^{i,j}(W_{-\frac12}; x,y):= J^*_{\frac{i}{2},\frac{j}{2}}(x)J^*_{\frac{i}{2},\frac{j}{2}}(y) 
       K_{n} (J^*_{i,j} W_{-\frac12} ;x,y)
\end{align}
and
$$
     J^*_{\a,\b}(x) : =  (1-x_1+x_2)^{\a}(1+x_1+x_2)^{\b}. 
$$
Evidently, $S_n^{0,0}(W_{-\frac12}; f) = S_{n}(W_{-\frac12}; f)$. We will show that operators 
in this family have the same convergence behavior.
 
Let $u, v$ be the generalized Jacobi weight functions. We define weight functions $U$ and $V$ by
\begin{equation} \label{U-V}
       U(x+y, xy) = u(x)u(y) \quad\hbox{and}\quad  V(x+y,xy)  = v(x)v(y)
\end{equation}
for $-1 < x < y <1$.  The functions $U$ and $V$ are well defined on $\Omega$. 

\begin{thm} \label{thm:meanWab}
Let $w, u, v \in GJ$ and $i,j \ge 0$. Then for $1 < p < \infty$,
\begin{equation} \label{mean-Wab}
      \| S_n^{i,j}(W_{-\frac12}; f) U \|_{W_{  - \frac12},p} \le c \|f V \|_{W_{- \frac12},p}
\end{equation}
for all $f$ such that $\|f V \|_{W_{- \frac12},p}< \infty$ if $u, v, w$ satisfy the conditions in
\eqref{mean-cond}. In particular, \eqref{mean-Wab} implies that $\|(  S_n (W_{-\frac12}; f) - f)V\|_{W_{- \frac12},p}
\to 0$ when $n \to \infty$ for all $f$ such that $\|f V \|_{W_{- \frac12},p} < \infty$. 
\end{thm}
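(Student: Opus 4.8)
The plan is to transfer the whole problem from $\Omega$ to the square $[-1,1]^2$ by means of the substitution $(x_1,x_2)\mapsto(x_1+x_2,x_1x_2)$, and then to deduce the two-variable estimate from the one-variable estimate of Theorem~\ref{Jacobi-mean-ij} by exploiting the product structure of the reproducing kernel furnished by Theorem~\ref{thm:reprodW}. First I would record the elementary facts behind this substitution. For $f$ on $\Omega$ its pullback $g(x_1,x_2):=f(x_1+x_2,x_1x_2)$ is a symmetric function on $[-1,1]^2$; the identity \eqref{Para-Square} with $\g=-\frac12$ rewrites integrals over $\Omega$ against $W_{-\frac12}$ as integrals of the corresponding pulled-back integrand over $[-1,1]^2$ against $B_{-\frac12}$, where $B_{-\frac12}(x_1,x_2)$ is a constant multiple of $w(x_1)w(x_2)$ — here the value $\g=-\frac12$ is essential, since the factor $|x_1-x_2|^{2\g+1}$ then disappears; under the same substitution $J^*_{\frac{i}{2},\frac{j}{2}}$ factors as $J_{\frac{i}{2},\frac{j}{2}}(x_1)J_{\frac{i}{2},\frac{j}{2}}(x_2)$, and $J^*_{i,j}W_{-\frac12}$ becomes, up to a constant, the weight of the same type built from $\wt w:=J_{i,j}w$, which again belongs to $GJ$ because $i,j\ge0$.

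With these identifications in hand, the key step is to apply Theorem~\ref{thm:reprodW} with $w$ replaced by $\wt w$: the kernel $K_n(J^*_{i,j}W_{-\frac12};\cdot,\cdot)$ then equals, up to a constant, the symmetrized product $\frac12\bigl[\wt k_n(x_1,y_1)\wt k_n(x_2,y_2)+\wt k_n(x_2,y_1)\wt k_n(x_1,y_2)\bigr]$, where $\wt k_n:=k_n(J_{i,j}w;\cdot,\cdot)$ and $(x_1,x_2)$, $(y_1,y_2)$ are the preimages of the two arguments. Inserting this into the definition \eqref{Sn_ij}--\eqref{Kn_ij} of $S_n^{i,j}(W_{-\frac12};f)$, pulling the $y$-integral back to $[-1,1]^2$, and using the symmetry of $g$ — which makes the two halves of the symmetrized kernel contribute equally — I expect to arrive at
\[
   S_n^{i,j}(W_{-\tfrac12};f,u)=c\,\bigl(s_n^{i,j}(w)\otimes s_n^{i,j}(w)\bigr)g(x_1,x_2),\qquad u=(x_1+x_2,x_1x_2);
\]
that is, the pulled-back operator is a constant times the tensor square of the one-variable operator $s_n^{i,j}(w)$ of \eqref{snij}.

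It then remains to estimate this tensor square. Pulling the two weighted $L^p$ norms in \eqref{mean-Wab} back to the square via \eqref{Para-Square} and \eqref{U-V} (so that $U$ and $V$ become the product weights $u(x_1)u(x_2)$ and $v(x_1)v(x_2)$, and $W_{-\frac12}\,dudv$ becomes a constant multiple of $w(x_1)w(x_2)\,dx_1dx_2$), the assertion reduces to
\begin{align*}
&\int_{[-1,1]^2}\bigl|\bigl(s_n^{i,j}(w)\otimes s_n^{i,j}(w)\bigr)g\bigr|^p u(x_1)^pu(x_2)^p\,w(x_1)w(x_2)\,dx_1dx_2 \\
&\qquad\le c\int_{[-1,1]^2}|g|^p v(x_1)^pv(x_2)^p\,w(x_1)w(x_2)\,dx_1dx_2.
\end{align*}
By Theorem~\ref{Jacobi-mean-ij}, under the hypotheses \eqref{mean-cond} the operator $T:=s_n^{i,j}(w)$ maps $L^p(v^pw)$ into $L^p(u^pw)$ with operator norm bounded independently of $n$; writing $T\otimes T=(T\otimes\mathrm{Id})\circ(\mathrm{Id}\otimes T)$ and applying Fubini's theorem in each variable separately then bounds $T\otimes T$ from $L^p(v^pw\otimes v^pw)$ into $L^p(u^pw\otimes u^pw)$, which is exactly the displayed inequality and hence \eqref{mean-Wab}. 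For the final assertion I would take $u=v$; then \eqref{mean-cond} includes $u^pw=v^pw\in L^1$, so $V^pW_{-\frac12}$ is a finite measure on $\Omega$ in which polynomials are dense, and the conclusion follows from the uniform bound just proved together with $S_n(W_{-\frac12};P)=P$ for every polynomial $P$ once $n\ge\deg P$, by the standard Banach--Steinhaus argument.

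The main obstacle is bookkeeping rather than a genuine difficulty: one must keep careful track of the normalization constants $a_w$, of the way the boundary factors $J^*$ on $\Omega$ pass to products of one-variable factors $J$, and, above all, verify that the two pieces of the symmetrized kernel of Theorem~\ref{thm:reprodW} really coincide when tested against the symmetric pullback $g$, so that the operator collapses to a clean tensor product. The only other point requiring care is the legitimacy of tensoring the one-variable estimate when the weights $u,v,w$ are genuinely singular — this is handled by Fubini, using that $J_{i,j}w\in GJ$ for $i,j\ge0$ — together with the routine density argument needed to upgrade the uniform bound to actual convergence.
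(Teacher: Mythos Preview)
Your proposal is correct and follows essentially the same route as the paper: pull back to $[-1,1]^2$ via \eqref{Para-Square}, use Theorem~\ref{thm:reprodW} to identify the operator with (the symmetrization of) the tensor square $s_n^{i,j}(w)\otimes s_n^{i,j}(w)$, and then apply Theorem~\ref{Jacobi-mean-ij} in each variable via Fubini. The only cosmetic differences are that the paper treats $(i,j)=(0,0)$ first and then remarks that the general case is identical, and that it keeps both halves of the symmetrized kernel and bounds them by the triangle inequality, whereas you observe (correctly) that the symmetry of the pullback $g$ makes the two halves coincide so the operator is exactly a tensor product.
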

 
\begin{proof}
For $f$ defined on $\Omega$ we define $F(x_1,x_2) := f(x_1+x_2, x_1x_2)$ for $(x_1,x_2) \in [-1,1]^2$. 
We first consider the case $i = j =0$. Let $W^*(x) := w(x_1)w(x_2)$. By Theorem \ref{thm:reprodW}, we have 
\begin{equation*}
   S_n (W_{-\frac12}; f,  x_1+x_2,x_1x_2) = \frac12 \left[S_{n,n}(W^*; F, x_1,x_2)
     + S_{n,n}(W^*; x_2,x_1) \right], 
\end{equation*}
where $S_{n,n} (W^*; F)$ denotes the partial sum of the product generalized Jacobi series that has degree
$n$ in each of the variables $x_1$ and $x_2$. By definition,
\begin{align*}
   S_{n,n}  (W^*; F,  x_1,x_2)     = \int_{-1}^1 \int_{-1}^1 F(y_1,y_2) k_n(w; y_1)k_n(w; y_2)
          W_{\a,\b}^*(y_1,y_2)  dy_1 dy_2.
\end{align*}
Similarly, define $U^*(y_1,y_2): = u(y_1)u(y_2)$ and $V^*(y_1,y_2): = v(y_1)v(y_2)$. Then, applying 
\eqref{Para-Square} with $\g = -\frac12$, we obtain upon using the definition of $W_{-\frac12}$, 
$$
  \|S_n (W_{-\frac12}; f) U \|_{W_{-\frac12},p} \le   \|S_{n,n}(W^*; F) U^*\|_{W^*,p}.
$$
where the norm of the right hand side is taken  over $[-1,1]^2$ against the weight function 
$W^*$. Setting $t_n (w; x_1,y_2) := s_n(w; F(\cdot, y_2), x_1)$, we can write 
$$
    S_{n,n} (W^*; F, x_1,x_2) = s_n(w; t_n (w;  x_1, \cdot), x_2). 
$$
Thus, the product nature of $S_{n,n}(W^*)$ allows us to apply Theorem \ref{Jacobi-mean-ij} 
twice to conclude that 
$$
     \|S_{n,n} (W^*; F) U^*\|_{W^*,p} \le c \|F V^*\|_{W^*,p}  = c  \|f V \|_{W_{ - \frac12},p},
$$
where the equality follows from \eqref{Para-Square}. This completes the proof when $(i,j) = (0,0)$. 

The above proof carries over to the case $i, j > 0$. Indeed, since $(x_1,x_2) \mapsto (x_1+x_2,x_1x_2)$ sends 
$1 \pm x_1+x_2$ to $(1 \pm x_1)(1\pm x_2)$, by Theorem \ref{thm:reprodW}, we have 
\begin{equation*}
  S_n (W_{-\frac12}; f, x_1+x_2,x_1x_2) = \frac12 \left[S_{n,n}^{i,j} (W^*; F, x_1,x_2)+ S_{n,n}^{i,j} (W^*; F, x_2,x_1) \right], 
\end{equation*}
where $S_{n,n}^{i,j} (W^*;F)$ can be expressed in terms of $s_n^{i,j}(w)$ at \eqref{sn-ij} exactly as 
$S_{n,n}(W^*; F)$ is expressed in terms of $s_n(w)$. Consequently, the same proof 
applies and the desired result follows from Theorem \ref{Jacobi-mean-ij}.
\end{proof}

In the case of $p = 1$ or $\infty$, we restrict to the case of $W_{-\frac12} = W_{\a,\b,-1/2}$.

\begin{thm} \label{thm:uniformWab}
Let $\a,\b \ge - \frac12$ and $i,j \ge 0$. Then
\begin{align} \label{uniformWab}
 \|S_n^{i,j}(W_{\a,\b, - \frac12})\|_\infty  & = \|S_n^{i,j}(W_{\a,\b, -\frac12})\|_{W_{\a,\b, - \frac12},1} \\
   &  = \CO(1) \begin{cases} n^{\max\{\a,\b\} +1/2}), & \max\{\a,\b\} > -1/2, \\
                 \log n, & \max\{\a,\b\} = -1/2. \end{cases} \notag
\end{align}
\end{thm}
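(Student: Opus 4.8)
The two norms in \eqref{uniformWab} coincide for a soft reason: the kernel $K_n^{i,j}(W_{-\frac12};x,y)$ of \eqref{Kn_ij} is symmetric in $x$ and $y$ (it is a reproducing kernel multiplied by the separable factor $J^*_{\frac i2,\frac j2}(x)J^*_{\frac i2,\frac j2}(y)$), so the standard computation of the operator norm of a kernel operator on $C(\overline\Omega)$, together with Fubini's theorem, gives
\[
   \|S_n^{i,j}(W_{\a,\b,-\frac12})\|_\infty=\|S_n^{i,j}(W_{\a,\b,-\frac12})\|_{W_{\a,\b,-\frac12},1}
     =\sup_{x\in\overline\Omega}\int_\Omega\big|K_n^{i,j}(W_{-\frac12};x,y)\big|\,W_{-\frac12}(y)\,dy=:\Lambda_n .
\]
Hence it suffices to show $\Lambda_n=\CO\big(n^{\max\{\a,\b\}+1/2}\big)$ when $\max\{\a,\b\}>-\frac12$ and $\Lambda_n=\CO(\log n)$ when $\max\{\a,\b\}=-\frac12$.

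The next step is to make the kernel explicit. Under \eqref{u-v} the factor $(1-u+v)^i(1+u+v)^j$ equals $[(1-x_1)(1-x_2)]^i[(1+x_1)(1+x_2)]^j$, so $J^*_{i,j}W_{\a,\b,-\frac12}$ is a constant multiple of $W_{\a+i,\b+j,-\frac12}$. Thus \thmref{thm:reprodW}, applied with the Jacobi weight $w_{\a+i,\b+j}$, expresses $K_n(J^*_{i,j}W_{-\frac12};\cdot,\cdot)$ in terms of the one–variable Christoffel–Darboux kernels $k_n^{(\a+i,\b+j)}$; multiplying by $J^*_{\frac i2,\frac j2}(x)J^*_{\frac i2,\frac j2}(y)$ and distributing the endpoint factors onto the kernels converts each $k_n^{(\a+i,\b+j)}$ into the kernel $k_n^{(\a,\b),i,j}$ of \eqref{kernel-esti}. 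Writing $\kappa(\t,\phi):=k_n^{(\a,\b),i,j}(\cos\t,\cos\phi)$, and letting $\t_1,\t_2$ and $\phi_1,\phi_2$ be the angles for which $x$ and $y$ correspond under \eqref{u-v} to $(\cos\t_1,\cos\t_2)$ and $(\cos\phi_1,\cos\phi_2)$, this yields
\[
  K_n^{i,j}(W_{-\frac12};x,y)=\tfrac12\big[\kappa(\t_1,\phi_1)\kappa(\t_2,\phi_2)+\kappa(\t_2,\phi_1)\kappa(\t_1,\phi_2)\big].
\]
Substituting this into $\Lambda_n$, transporting the $y$–integral from $\Omega$ to $[-1,1]^2$ by \eqref{Para-Square} with $\g=-\frac12$ — whereby the Jacobian $|y_1-y_2|$ cancels $(u^2-4v)^{-1/2}$ and only the product Jacobi weight survives — and passing to angular variables, $\Lambda_n$ becomes the supremum over $(\t_1,\t_2)\in[0,\pi]^2$ of
\[
  c\int_0^\pi\!\!\int_0^\pi\big|\kappa(\t_1,\phi_1)\kappa(\t_2,\phi_2)+\kappa(\t_2,\phi_1)\kappa(\t_1,\phi_2)\big|\,\varpi(\phi_1)\varpi(\phi_2)\,d\phi_1\,d\phi_2,\quad\varpi(\phi):=(\sin\tfrac\phi2)^{2\a+1}(\cos\tfrac\phi2)^{2\b+1}.
\]

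To estimate this double integral uniformly in $\t_1,\t_2$, I would insert the pointwise bound \eqref{kernel-esti} for $\kappa$ and split $[0,\pi]^2$ in the $(\phi_1,\phi_2)$ variables into dyadic pieces relative to $\t_1$ and $\t_2$, the two–dimensional analogue of the decomposition used in the proof of \thmref{Jacobi-uniform}. On the pieces where $\{\phi_1,\phi_2\}$ stays away from both $\t_1$ and $\t_2$ the kernel factors are $\CO(1)$ and the contribution is bounded; on the remaining pieces the $\phi_1$– and $\phi_2$–integrals decouple, and each is estimated exactly by the one–variable argument behind \thmref{Jacobi-uniform}. Since the weight $\varpi$ concentrates near at most one endpoint of $[0,\pi]$, at most one factor in such a product carries the growing power $n^{\max\{\a,\b\}+1/2}$ while the other stays $\CO(1)$, and the two cross terms merely interchange the roles of $\t_1$ and $\t_2$ — this is the mechanism that should yield $n^{\max\{\a,\b\}+1/2}$ rather than its square.

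The step I expect to be the main obstacle is the diagonal corner: the region where $\t_1$ and $\t_2$ are simultaneously close to the same endpoint of $[0,\pi]$, i.e.\ $x$ near the vertex of $\Omega$ on the parabola $u^2=4v$. There the two summands in the bracket coincide, and the crude estimate $\Lambda_n\le c\,\mu(\t_1)\mu(\t_2)$ with $\mu(\t):=\int_0^\pi|\kappa(\t,\phi)|\varpi(\phi)\,d\phi\le\|s_n^{(\a,\b),i,j}\|_\infty$ only yields $\CO\big(n^{2\max\{\a,\b\}+1}\big)$. Getting the right order requires handling the joint $(\phi_1,\phi_2)$ integral without splitting off a product, using the vanishing of $k_n^{(\a,\b),i,j}(\cos\t,\cdot)$ at $\t=0,\pi$ that is encoded in the prefactors $(\sin\tfrac\t2)^i(\cos\tfrac\t2)^j$ — a refinement of \eqref{kernel-esti} near the endpoints — together with the symmetrization built into the kernel. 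Once this uniform bound is in hand, \eqref{uniformWab} follows; the matching lower bound that makes the estimate sharp is then obtained by testing $S_n^{i,j}(W_{\a,\b,-\frac12})$ against a symmetric function concentrated near the relevant vertex of $\Omega$, just as the one–variable Lebesgue function is maximized at an endpoint.
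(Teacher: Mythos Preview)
Your setup through the kernel formula and the change of variables is exactly right, and in fact your ``crude estimate'' in the last paragraph,
\[
   \Lambda_n \le c\,\mu(\t_1)\mu(\t_2), \qquad \mu(\t)=\int_0^\pi |\kappa(\t,\phi)|\,\varpi(\phi)\,d\phi \le \|s_n^{(\a,\b),i,j}\|_\infty,
\]
\emph{is} the paper's entire proof. The paper simply applies \eqref{Para-Square}, \eqref{eq:reprodW-1/2} and \eqref{sn-ij} to obtain
\[
   \|S_n^{i,j}(W_{\a,\b,-\frac12})\|_\infty \le \bigl(\|s_n^{(\a,\b),i,j}\|_\infty\bigr)^2
\]
and then quotes \eqref{uniform-Jacobi}. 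That yields $\CO(n^{2\max\{\a,\b\}+1})$ when $\max\{\a,\b\}>-\tfrac12$ and $\CO((\log n)^2)$ when $\max\{\a,\b\}=-\tfrac12$, i.e.\ the square of the one--variable Lebesgue constant. The exponent printed in the theorem statement is evidently a typo --- the proof as written does not, and does not claim to, beat the product bound.

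So the difficulty you flag in your last paragraph is real, but it is a difficulty for a stronger statement than the one the paper actually establishes. Your proposed refinement --- exploiting endpoint vanishing of $k_n^{(\a,\b),i,j}$ and the symmetrization to kill one of the two growing factors near the parabolic vertex --- is not needed here, and is also not worked out: the prefactor $(\sin\tfrac\t2)^i(\cos\tfrac\t2)^j$ has already been absorbed into the estimate \eqref{kernel-esti}, so there is no further vanishing to extract from it, and no cancellation mechanism between the two summands is identified. If you drop the attempt to avoid the square, the argument you already have (triangle inequality on the two product terms, then Fubini) is complete and matches the paper.
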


\begin{proof}
The standard argument shows that 
\begin{align*} 
& \|S_n^{i,j}(W_{\a,\b, - \frac12})\|_\infty    = \|S_n^{i,j}(W_{\a,\b, - \frac12})\|_{W_{\a,\b, - \frac12},1} \\
    &    = \max_{x \in \Omega} (1-x_1+x_2)^{\frac{i}{2}}(1+x_1+x_2)^{\frac{j}{2}}
        \int_{\Omega} |K_n(W_{\a+i,\b+j,-\frac12};x,y)| W_{\a+\frac{i}{2},\b+\frac{j}{2},-\frac12}(y)dy.
\end{align*}
Applying \eqref{Para-Square}, \eqref{eq:reprodW-1/2} and \eqref{sn-ij}, it follows readily that 
$$
  \|S_n^{i,j}(W_{\a,\b, - \frac12})\|_\infty  \le \left(\|s_n^{(\a,\b),i,j}\|_\infty \right)^2
$$
so that the stated result follows from \eqref{uniform-Jacobi}. 
\end{proof} 
 
The case of $S_n(W_{\frac12})$ is harder to work with because of the denominator $x - y$ in
its kernel \eqref{eq:reprodW+1/2}. The proof of Theorem \ref{thm:meanWab} could carry over 
only if we modified the definitions of $U$ and $V$ to include a power of $|x-y|$. Since it adds 
little to our understanding, we shall not pursuit it any further.  

\subsection{Orthogonal expansions for $\CW_{ -\frac12}$ on $[-1,1]^2$}
We now consider the convergence of the Fourier orthogonal expansions with respect to
$\CW_{-\frac12}$ on $[-1,1]^2$. 

Let $u, v \in GJ$ be the generalized Jacobi weights. We define the weight functions $\CU$ and $\CV$ 
on $[-1,1]^2$ by
\begin{align*}
  \CU(\cos \t, \cos \phi) := & u(\cos (\t - \phi)) u(\cos (\t+\phi)), \\
  \CV(\cos \t, \cos \phi) := & v(\cos (\t - \phi)) v(\cos (\t+\phi)).
\end{align*}

\begin{thm} \label{thm:meanCWab}
Let $w, u, v \in GJ$. Then for $1 < p < \infty$, 
\begin{equation} \label{mean-CWab}
      \|\CS_n(\CW_{- \frac12}; f) \CU \|_{\CW_{ - \frac12},p} \le c \|f \CV \|_{\CW_{- \frac12},p}
\end{equation}
for all $f$ such that $\|f \CV \|_{\CW_{- \frac12},p}< \infty$ if $u, v, w$ satisfy the conditions in
\eqref{mean-cond}. In particular, \eqref{mean-CWab} implies that 
$\|( \CS_n(\CW_{ -\frac12}; f) - f)\CV\|_{\CW_{ - \frac12},p} \to 0$ when $n \to \infty$ for all $f$ 
such that $\|f \CV \|_{\CW_{ - \frac12},p} < \infty$. 
\end{thm}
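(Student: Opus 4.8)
The plan is to push the problem from the square $[-1,1]^2$ over to the domain $\Omega$ via the quadratic map $x=(x_1,x_2)\mapsto s=(2x_1x_2,x_1^2+x_2^2-1)$ of \eqref{eq:s-t} and then to invoke Theorem~\ref{thm:meanWab}. The preliminary facts I would assemble are: (i) by Lemma~\ref{Int-Para-cube} and \eqref{Int-P-Q}, a $\CW_{-\frac12}$-weighted integral over $[-1,1]^2$ equals a $W_{-\frac12}$-weighted integral over $\Omega$ whenever the integrand is constant on the fibres of $x\mapsto s$; (ii) writing $x_i=\cos\t_i$ one has $s=(a+b,ab)$ with $a=\cos(\t_1-\t_2)$, $b=\cos(\t_1+\t_2)$, so by the definitions of $\CU,\CV$ and of $U,V$ in \eqref{U-V} we get $\CU=U\circ(x\mapsto s)$ and $\CV=V\circ(x\mapsto s)$; (iii) $1\mp s_1+s_2=(x_1\pm x_2)^2$, whence the polynomial prefactors in \eqref{eq:repCW} satisfy $(x_1+x_2)^2=J^*_{0,1}(s)$, $(x_1-x_2)^2=J^*_{1,0}(s)$, $(x_1^2-x_2^2)^2=J^*_{1,1}(s)$ — each prefactor is, up to a $\pm1$ depending only on the point, the square root of the very weight $J^*_{i,j}$ built into the operator $S^{i,j}_n$ through \eqref{Kn_ij}; and (iv) $\CW_{-\frac12},\CU,\CV$ are all invariant under the Klein four-group generated by $x_1\leftrightarrow x_2$ and $(x_1,x_2)\mapsto(-x_1,-x_2)$ — the fibres of $x\mapsto s$ — while the three prefactors carry its three nontrivial characters, so \eqref{eq:repCW} is exactly the isotypic decomposition of $\CK_n^{(-\frac12)}$ under that group.

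Given this, I would split $\CS_n(\CW_{-\frac12};f)$ into the four summands coming from \eqref{eq:repCW}. For each summand the remaining factors of its kernel are fibre-invariant, so only the matching isotypic component of $f$ contributes; writing that component as a $\pm1$ times a function of $s$, applying \eqref{Int-P-Q}, and using that $J^*_{i,j}W_{-\frac12}$ is a constant multiple of the weight $W^{(i,j)}_{-\frac12}$ of \eqref{Wij}, one identifies the $j$-th summand with a bounded constant times a $\pm1$ (a function of $s$ only) times $S^{i,j}_m(W_{-\frac12};g_j)(s)$ of \eqref{Sn_ij}, for the appropriate $(i,j)\in\{(0,0),(1,1),(0,1),(1,0)\}$ and $m\in\{\lfloor n/2\rfloor,\lfloor n/2\rfloor-1,\lfloor(n-1)/2\rfloor\}$, with $g_j$ the corresponding isotypic component of $f$ transported to $\Omega$. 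Taking $p$-th powers against $\CU^p\CW_{-\frac12}$ and using \eqref{Int-P-Q} once more (the integrand is now fibre-invariant), the weighted norm of the $j$-th summand over $[-1,1]^2$ equals a constant times $\|S^{i,j}_m(W_{-\frac12};g_j)\,U\|_{W_{-\frac12},p}$ over $\Omega$; Theorem~\ref{thm:meanWab} — whose hypothesis \eqref{mean-cond} on $u,v,w$ is exactly the one assumed here, and whose constant is uniform in $m$ — bounds this by $c\|g_jV\|_{W_{-\frac12},p}\le c\|f\CV\|_{\CW_{-\frac12},p}$, the last inequality because isotypic projection is a contraction on $L^p$ of the fibre-invariant measure $\CV^p\CW_{-\frac12}$. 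Summing the four bounds yields \eqref{mean-CWab}. For the ``in particular'' assertion, take $u=v$ so that \eqref{mean-CWab} gives $\|\CS_n(\CW_{-\frac12};f)\CV\|_{\CW_{-\frac12},p}\le c\|f\CV\|_{\CW_{-\frac12},p}$ uniformly in $n$; since $\CS_n$ fixes polynomials of degree at most $n$ and polynomials are dense in the weighted $L^p$-space, an $\ve/3$ argument gives $\|(\CS_n(\CW_{-\frac12};f)-f)\CV\|_{\CW_{-\frac12},p}\to0$.

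The analytic content is all in Theorem~\ref{thm:meanWab} (itself resting on the one-variable Theorem~\ref{Jacobi-mean-ij}); what is left here is bookkeeping. The step I expect to be the most delicate is (iii)--(iv) together with the constant-chasing of the second paragraph: one must pair each prefactor in \eqref{eq:repCW} with the correct character of the four-group and with the correct auxiliary kernel $K^{(-\frac12),i,j}$, and keep the normalizations $d^\g_{w^{(i,j)}}$, $a^\g_{w^{(i,j)}}$ and the floor-function degree shifts straight, so that the four summands land precisely on operators covered by Theorem~\ref{thm:meanWab} and nothing is lost in the reduction.
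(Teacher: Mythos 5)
Your proposal is correct and follows essentially the same route as the paper: the paper likewise uses \eqref{eq:repCW} together with \eqref{Kn_ij} and the identities $1\mp s_1+s_2=(x_1\pm x_2)^2$ to write $\CS_{2n}(\CW_{-\frac12};f)$ as the sum of $S_n(W_{-\frac12};f^*)$ and the three operators $S^{i,j}_{n-1}(W_{-\frac12};f^*)$, transports the norms to $\Omega$ via \eqref{Int-P-Q} using $\CU=U\circ s$ and $\CV=V\circ s$, and then invokes Theorem~\ref{thm:meanWab}. Your isotypic decomposition under the Klein four-group is simply a more explicit bookkeeping of the paper's passage from $f$ to $f^*$ (and of the signs hidden in the prefactors), so the two arguments coincide in substance.
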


\begin{proof}
We consider the case $\CS_{2n}(\CW_{- \frac12}; f)$. For a given $f$ on $[-1,1]^2$, we define
$f^*$ by 
$$
        f^*(2 x_1x_2, x_1^2+x_2^2-1) = f(x_1,x_2). 
$$
By \eqref{theta-phi}, $f^*$ is well defined on $\Omega$. 
If $t_1  = 2 y_1y_2$ and $t_2 = y_1^2+y_2^2-1$, then it is easy to see that 
$1-t_1+t_2= (y_1-y_2)^2$ and $1+t_1+t_2= (y_1+y_2)^2$. Hence, by \eqref{eq:repCW}
and \eqref{Kn_ij}, we obtain that  
\begin{align*} 
   \CK_n(\CW_{-\frac12}; x,y) = &  K_{\lfloor \frac{n}{2} \rfloor}(W_{-\frac12}; s,t)  
              +  d_{0,1} K_{\lfloor \frac{n-1}{2} \rfloor}^{0,1}(W_{-\frac12}; s,t)  \\
              & +  d_{1,1} K_{\lfloor \frac{n-1}{2} \rfloor}^{1,0}( W_{-\frac12};s,t)   
              +  d_{1,1} K_{\lfloor \frac{n}{2} \rfloor -1}^{1,1}(W_{-\frac12}; s,t).
\end{align*}
Consequently, by \eqref{Int-P-Q}, it follows that 
\begin{align} \label{CS-S}
 \CS_{2n}(\CW_{-\frac12}; f,x ) = S_{n}(W_{-\frac12}; f^*, s)& +S_{n-1}^{1,0}(W_{-\frac12}; f^* , s) \\
     & + S_{n-1}^{0,1}(W_{-\frac12} ; f^*, s) + S_{n-1}^{1,1}(W_{-\frac12} ; f^*, s), \notag
 \end{align} 
where $s = ( 2 x_1 x_2, x_1^2+x_2^2-1)$. Recall the definition of $U$ and $V$ in
\eqref{U-V}. A simple computation via elementary trigonometric identities shows that 
$$
   \CU(x_1,x_2)  = U(2x_1x_2, x_1^2 + x_2^2 -1) \quad \hbox{and}\quad  
           \CV(x_1,x_2)  = V(2x_1x_2, x_1^2 + x_2^2 -1).
$$
Consequently, applying \eqref{Int-P-Q} again, we see that 
\begin{align*}
\|\CS_{2n}(\CW_{-\frac12}; f) \CU\|_{\CW_{-\frac12},p}    \le &
     \| S_{n}(W_{-\frac12}; f^*)U\|_{W_{-\frac12},p}  +
      \|S_{n-1}^{1,0}(W_{ - \frac12}; f^*)U\|_{W_{-\frac12},p} \\
     & + \|S_{n-1}^{0,1}(W_{-\frac12}; f^*)U\|_{W_{-\frac12},p} 
        + \| S_{n-1}^{1,1}(W_{-\frac12}; f^*)U\|_{W_{-\frac12},p}\\
    \le & c \|f^* V\|_{W_{-\frac12},p} = c  \|f \CV\|_{\CW_{-\frac12},p} 
\end{align*}
by Theorem \ref{thm:meanWab} with $i,j = 0,1$. The proof for $\CS_{2n+1}(W_{-\frac12}f)$ follows
analogously. 
\end{proof}

\begin{cor}
Let $\psi$ be a positive, continuously differentiable on $[-1,1]$ and define 
$\Psi(x,y)$ by $\Psi(\cos \t, \cos \phi) = \psi(\cos(\t-\phi), \cos (\t + \phi))$. Then for  
$$
\CW_{\a,\b}(x,y) : = \Psi(x, y) |x-y|^{2\a+1} |x+y|^{2\b+1} (1-x^2)^{-\frac12}(1-y^2)^{-\frac12}, 
$$
where $\a, \b > -1$, 
$$
      \|\CS_n (\CW_{\a,\b}; f) \|_{\CW_{\a,\b},p} \le c \|f \|_{\CW_{\a,\b},p}
$$
for all $f$ such that $\|f\|_{\CW_{\a,\b},p}< \infty$ if \eqref{Jacobi-cond} holds. 
\end{cor}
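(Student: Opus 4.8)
The plan is to deduce the corollary directly from Theorem~\ref{thm:meanCWab}, applied with the auxiliary weights $u$ and $v$ both taken to be the constant $1$. The first step is to identify $\CW_{\a,\b}$ as an instance of $\CW_{-\frac12}$: by Lemma~\ref{lem:GJ} with no interior points ($r=0$), $\g=-\frac12$, $\g_0=\a$ and $\g_{r+1}=\b$, the weight $\CW_{-\frac12}$ of \eqref{CWgamma} built from the one-variable weight
$$
   w(x) = \psi(x)(1-x)^\a(1+x)^\b
$$
equals $\CW_{\a,\b}$, with $\Psi$ exactly as in Lemma~\ref{lem:GJ}, up to a positive constant factor; this factor plays no role, since rescaling the weight changes neither $\CS_n$ nor the inequality in question. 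Because $\psi$ is positive and continuously differentiable on $[-1,1]$, its modulus of continuity satisfies the integral condition in \eqref{GJ}, so $w\in GJ$; and the constant function $1$ is an ordinary, hence generalized, Jacobi weight, for which $\CU=\CV\equiv1$ in the notation preceding Theorem~\ref{thm:meanCWab}. So \eqref{mean-CWab} with $u=v\equiv1$ yields exactly the claimed bound, provided $w,u,v$ satisfy the hypotheses \eqref{mean-cond}.

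The second step is to verify that, for $u=v\equiv1$, conditions \eqref{mean-cond} follow from \eqref{Jacobi-cond}. The inequality $u\le v$ is trivial, and $u^pw=v^{-q}w=w\in L^1$ for every $w\in GJ$; the remaining content is
$$
   (wJ_{\frac12,\frac12})^{-p/2}\,w\in L^1
   \qquad\text{and}\qquad
   (wJ_{\frac12,\frac12})^{-q/2}\,w\in L^1 .
$$
Since $wJ_{\frac12,\frac12}(x)=\psi(x)(1-x)^{\a+\frac12}(1+x)^{\b+\frac12}$ and $\psi$ is continuous and strictly positive on the compact interval $[-1,1]$, it is bounded above and below by positive constants, so the factor $\psi$ is irrelevant; integrability of $(wJ_{\frac12,\frac12})^{-p/2}w$ then amounts to the endpoint requirements $\a-(\a+\tfrac12)\tfrac p2>-1$ and $\b-(\b+\tfrac12)\tfrac p2>-1$, together with the analogous pair for $q$. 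This is the same reduction underlying the corollary to Theorem~\ref{Jacobi-mean-ij} in one variable.

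It remains to match these four endpoint inequalities with \eqref{Jacobi-cond}. The condition at $x=1$ rearranges to $p<2+\tfrac{2}{2\a+1}$ and the one at $x=-1$ to $p<2+\tfrac{2}{2\b+1}$; combining them gives $p<2+\tfrac{2}{2\max\{\a,\b\}+1}$, the upper bound in \eqref{Jacobi-cond} (when $\max\{\a,\b\}<-\tfrac12$ the interval in \eqref{Jacobi-cond} is empty and nothing is claimed). Running the same computation with $q$ in place of $p$ and then using $q=p/(p-1)$ turns $q<2+\tfrac{2}{2\max\{\a,\b\}+1}$ into $p>2-\tfrac{2}{2\max\{\a,\b\}+3}$, the lower bound. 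Thus \eqref{Jacobi-cond} implies \eqref{mean-cond} for $u=v\equiv1$, and the corollary follows from Theorem~\ref{thm:meanCWab}. I expect no substantial obstacle: the argument is essentially bookkeeping, the only points needing care being that the normalising constant and the smooth factor $\psi$ drop out because they are bounded above and away from $0$, and that one must track the signs of $2\a+1$ and $2\b+1$ when rearranging the endpoint inequalities.
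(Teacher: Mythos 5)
Your proposal is correct and is essentially the paper's (implicit) argument: the corollary is exactly Theorem~\ref{thm:meanCWab} applied with $u=v\equiv 1$ and $w(x)=\psi(x)(1-x)^\a(1+x)^\b\in GJ$, with the normalization constant and the bounded positive factor $\psi$ playing no role, and with \eqref{mean-cond} reducing to \eqref{Jacobi-cond} just as in the one-variable corollary following Theorem~\ref{Jacobi-mean-ij}. Your endpoint bookkeeping (including the passage from the $q$-condition to the lower bound in \eqref{Jacobi-cond}) is accurate, so nothing further is needed.
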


Even for the case $\a = \b = -1/2$, this corollary is new. In the case of $\a = \b = - \frac12$
and $\psi(x) =1$, i.e., the product Chebyshev weight function, this was established in \cite{X96} 
by identifying the orthogonal expansion with the double Fourier series and $S_n f$ with the 
$\ell_1$ partial sum of the double Fourier series, then applying several results in 
the Fourier analysis. It is worth mentioning that no analogous results are known for the product
Jacobi weight $J_{\a,\b}(x)J_{\a,\b}(y)$ on the square. 

For the norm with $p = 1$ or $\infty$, we go back to the weight function associated with the 
Jacobi weight. 

\begin{thm} \label{thm:uniformCWab}
Let $\a,\b \ge -1/2$. Then
\begin{align} \label{uniformCWab}
 \|S_n(W_{\a,\b, - \frac12})\|_\infty  & = \|S_n(W_{\a,\b, -\frac12}) \|_{\CW_{\a,\b, - \frac12},1} \\
&  = \CO(1) \begin{cases} n^{\max\{\a,\b\} +1/2}), & \max\{\a,\b\} > -1/2, \\
              \log n, & \max\{\a,\b\} = -1/2. \end{cases} \notag
\end{align}
\end{thm}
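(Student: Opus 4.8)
The plan is to read \eqref{uniformCWab} off Theorem~\ref{thm:uniformWab} by means of the splitting of the reproducing kernel $\CK_n(\CW_{\a,\b,-\frac12};\cdot,\cdot)$ into four Koornwinder‑side pieces that is already present in the proof of Theorem~\ref{thm:meanCWab}. First, the leftmost equality in \eqref{uniformCWab} is the standard argument for an integral operator with symmetric kernel, used already for \eqref{Lebsgue} and in the proof of Theorem~\ref{thm:uniformWab}: since $\CK_n(\CW_{\a,\b,-\frac12};x,y)=\CK_n(\CW_{\a,\b,-\frac12};y,x)$ and $\CW_{\a,\b,-\frac12}$ is the weight on both sides, the operator norm of $\CS_n(\CW_{\a,\b,-\frac12})$ on $C([-1,1]^2)$ equals $\sup_{x}\int_{[-1,1]^2}|\CK_n(\CW_{\a,\b,-\frac12};x,y)|\,\CW_{\a,\b,-\frac12}(y)\,dy$, and this in turn equals the norm on $L^1(\CW_{\a,\b,-\frac12})$. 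So it remains to bound this Lebesgue constant from above.

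Next I would specialise \eqref{eq:reprodCW} to $\g=-\frac12$ and argue as in the derivation of \eqref{CS-S}. The polynomial prefactors in \eqref{eq:reprodCW} are absorbed into the $J^{*}$‑weights of \eqref{Kn_ij} (using $1-t_1+t_2=(y_1-y_2)^2$ and $1+t_1+t_2=(y_1+y_2)^2$) up to signs that disappear under $|\cdot|$; the change of variables \eqref{Int-P-Q} then turns $\int_{[-1,1]^2}|\CK_n(\CW_{\a,\b,-\frac12};x,y)|\,\CW_{\a,\b,-\frac12}(y)\,dy$, with $s=(2x_1x_2,x_1^2+x_2^2-1)$, into a sum of four integrals over $\Omega$ against the kernels $K^{i,j}_{m_{ij}}(W_{\a,\b,-\frac12};s,\cdot)$, where $(i,j)$ runs over $\{(0,0),(1,0),(0,1),(1,1)\}$ and each $m_{ij}$ is of order $n$. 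Bounding each of these integrals by its supremum over $s\in\Omega$ gives
$$
  \|\CS_n(\CW_{\a,\b,-\frac12})\|_\infty \;\le\; c\sum_{(i,j)}\bigl\|S^{i,j}_{m_{ij}}(W_{\a,\b,-\frac12})\bigr\|_\infty .
$$

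Each summand is $\|S^{i,j}_m(W_{\a,\b,-\frac12})\|_\infty$ with $i,j\in\{0,1\}$ (in particular $i,j\ge0$), $m$ of order $n$, and $\a,\b\ge-\frac12$ as assumed; so Theorem~\ref{thm:uniformWab} bounds each by $\CO(1)\,n^{\max\{\a,\b\}+1/2}$ when $\max\{\a,\b\}>-\frac12$ and by $\CO(1)\log n$ when $\max\{\a,\b\}=-\frac12$, and a sum of four terms of this size has the same size; this is \eqref{uniformCWab}. I do not expect a genuine obstacle here: the substantive input — the pointwise kernel estimate \eqref{kernel-esti} and the integral bounds behind Theorems~\ref{Jacobi-uniform} and~\ref{thm:uniformWab} — is already available, so what remains is the routine bookkeeping of the four indices $m_{ij}$ and of the (bounded) absorbing constants. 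The one point that would need a little more thought, should one want \eqref{uniformCWab} as a genuine two‑sided estimate rather than just the upper bound, is the matching lower bound: it should follow by evaluating the Lebesgue function at a vertex of $[-1,1]^2$ whose image under $x\mapsto s$ is a vertex of $\Omega$ and invoking the classical sharpness of the one‑variable Jacobi Lebesgue constant, the subtlety being to choose the vertex so that the factors $J^{*}_{i/2,j/2}$ in \eqref{eq:reprodCW} do not annihilate the dominant term.
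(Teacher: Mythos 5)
Your proposal is correct and follows essentially the same route as the paper: the paper deduces the theorem directly from the four-term decomposition \eqref{CS-S} (coming from \eqref{eq:reprodCW}) together with Theorem~\ref{thm:uniformWab}, exactly as you do, with the equality of the uniform and $L^1(\CW_{\a,\b,-\frac12})$ operator norms handled by the same standard symmetric-kernel argument as in \eqref{Lebsgue}. Your remarks on a matching lower bound go beyond what the stated $\CO(1)$ bound requires, but they do not affect the argument.
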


\begin{proof}
This is an immediate consequence of \eqref{CS-S} and Theorem \ref{thm:uniformCWab}. 
\end{proof}
 
 \medskip
 \noindent
 {\it Acknowledgment}. The author thanks an anonymous referee for his careful review.

\end{document}